\let\@cleartopmattertags\relax
\newcommand\articleend{%
	\enddoc@text
	\let\authors\@empty
	\let\shortauthors\@empty
	\let\contribs\@empty
	\let\xcontribs\@empty
	\let\toccontribs\@empty
	\let\addresses\@empty
	\let\thankses\@empty
	\newpage
}
\let\@wraptoccontribs\wraptoccontribs
\theoremstyle{plain}
\newtheorem{theorem}{Theorem}
\newtheorem{proposition}[theorem]{Proposition}
\newtheorem{lemma}[theorem]{Lemma}
\newtheorem{corollary}[theorem]{Corollary}
\theoremstyle{definition}
\newtheorem{definition}[theorem]{Definition}
\newtheorem{remark}[theorem]{Remark}
\newtheorem*{acknowledgements}{Acknowledgements}
\newcommand{\Hom}{\mathrm{Hom}} 
\newcommand{\Homah}[2]{{\rm H}(#1,#2)} 
\newcommand{\indC}{\mathsf{ind}\text{-}\mathsf{C^*}}
\newcommand{\ind}[1]{\mathsf{ind}\text{-}\mathsf{#1}}
\newcommand{\cstar}{\mathsf{C^*}}
\newcommand{\ho}{\mathsf{ho}(\mathsf{C^*)}}
\newcommand{\hoind}{\mathsf{ho}(\mathsf{ind}\text{-}\mathsf{C^*})}
\newcommand{\ssh}{\mathsf{s}\text{-}\mathsf{sh}}
\newcommand{\sh}{\mbox{$\mathsf{sh}$} }
\newcommand{\hlim}{\mathrm{h}\text{-}\!\varinjlim}
\newcommand{\indho}{\mathsf{ind}\text{-}\mathsf{ho}(\mathsf{C^*})}
\newcommand{\hoforget}{\mathrm F}
\newcommand{\hoinclude}{\mathrm{Ho}}
\newcommand{\hausdorffize}{\mathrm{Hd}}
\newcommand{\asinclusion}{\mathrm{As}}
\newcommand{\AM}{\mathsf{AM}}
\newcommand{\AMH}{\mathsf{AM_{\mathrm{Hd}}}}
\newcommand{\ev}{\mathrm{ev}}
\newcommand{\id}{\mathrm{id}}
\DeclareRobustCommand{\SkipTocEntry}[5]{}
\begin{document}
	
\title{A topology on $E$-theory}
\author[J. Carri\'on]{Jos\'e R.\ Carri\'on}
\address{Jos\'e R.\ Carri\'on, Department of Mathematics, Texas Christian
	University, Fort Worth, Texas 76129, USA}
\email{j.carrion@tcu.edu}

\author[C. Schafhauser]{Christopher Schafhauser}
\address{Christopher Schafhauser, Department of  Mathematics, University of Nebraska - Lincoln, Lincoln, Nebraska, USA}
\email{cschafhauser2@unl.edu}

\date{April 15, 2024}

\subjclass{19K35, 46L80, 46L85}

\begin{abstract}
  For separable $C^*$-algebras $A$ and $B$, we define a topology on the set $[[A, B]]$ consisting of homotopy classes of asymptotic morphisms from $A$ to $B$.  This gives an enrichment of the Connes--Higson asymptotic category over topological spaces.  We show that the Hausdorffization of this category is equivalent to the shape category of Dadarlat.  As an application, we obtain a topology on the $E$-theory group $E(A, B)$ with properties analogous to those of the topology on $KK(A, B)$.  The Hausdorffized $E$-theory group $EL(A, B) = E(A, B) / \overline{\{0\}}$ is also introduced and studied.  We obtain a continuity result for the functor $EL(\,\cdot\,,B)$, which implies a new continuity result for the functor $KL(\,\cdot\,,B)$.
\end{abstract}

\maketitle

\setcounter{tocdepth}{1}
\tableofcontents

\renewcommand{\thetheorem}{\Alph{theorem}}

\addtocontents{toc}{\SkipTocEntry}
\section*{Introduction}

$E$-theory was defined by Connes and Higson in \cite{Connes-Higson90}.
It is a concrete realization, defined in terms of asymptotic
morphisms, of the half-exact bifunctor first defined by Higson in
\cite{Higson90a}.  In parallel with $KK$-theory, this bifunctor from the category of separable
$C^*$-algebras (and $^*$-homomorphisms) to the category of abelian
groups is stable, homotopy invariant, and possesses a composition
product.  In fact, for separable $C^*$-algebras $A$ and $B$, there is a natural
transformation $E(A,B)\to KK(A,B)$ preserving the product structure
that is an isomorphism whenever $A$ is nuclear.  Unlike
$KK$-theory, however, $E$-theory is half-exact on all extensions of
separable $C^*$-algebras.  The theory played a prominent role in
approaches to both the Baum--Connes conjecture and classification
theory for nuclear $C^*$-algebras soon after its introduction
\cite{Higson-Kasparov01, Phillips00}.  See \cite{Connes-Higson90,
  Blackadar98, Guentner-Higson-etal00} for more on $E$-theory and its
applications.

In this paper, we define a topology on $E(A,B)$ with properties analogous to the ones satisfied by the topology on $KK(A,B)$.  With some restrictions on $A$ and $B$, the latter topology was first studied in depth by Brown \cite{Brown84} and Salinas \cite{Salinas92} in connection with quasidiagonal extensions---an early version is mentioned in the work of Brown, Douglas, and Fillmore \cite{Brown-Douglas-etal77}.  It was further developed by Schochet in \cite{Schochet01, Schochet02} before being defined and studied in general by Pimsner (unpublished) and Dadarlat \cite{Dadarlat05}.  That all of these definitions coincide (in their respective settings) relies on a characterization of convergence in terms of \emph{Pimsner's condition} (see \cite[Theorem~3.5]{Dadarlat05}).  The topology we define satisfies the $E$-theoretic version of this condition.

\begin{theorem}\label{thm:main}
  For separable $C^*$-algebras $A$ and $B$, there is a unique second countable topology on $E(A, B)$ such that $x_n \rightarrow x$ in $E(A, B)$ if and only if there exists $y \in E(A, C(\mathbb N^\dag, B))$ satisfying $y(n) = x_n$ for all $n \in \mathbb N$ and $y(\infty) = x$, where $\mathbb{N}^\dag = \mathbb N \cup \{\infty\}$ is the one-point compactification of the natural numbers. \end{theorem}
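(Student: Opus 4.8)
The plan is to regard the statement as asserting that a certain translation-invariant sequential convergence on the abelian group $E(A,B)$ is induced by a unique second countable (hence first countable) group topology. Write $\ev_n,\ev_\infty\colon C(\mathbb N^\dag, B)\to B$ for evaluation at $n\in\mathbb N$ and at $\infty$, and for $y\in E(A,C(\mathbb N^\dag,B))$ set $y(n)=(\ev_n)_* y$ and $y(\infty)=(\ev_\infty)_* y$. Declare $(x_n)\Rightarrow x$ when there is a $y$ with $y(n)=x_n$ and $y(\infty)=x$; this is exactly the relation in the theorem. First I would record its formal stability properties, all immediate from functoriality of $E(A,-)$ and the $C^*$-structure of $C(\mathbb N^\dag,B)$: constant sequences $\Rightarrow$-converge to their value (take a constant-valued $y$); the relation is unaffected by changing finitely many terms and is inherited by subsequences (pull back along the proper maps $\mathbb N^\dag\to\mathbb N^\dag$ fixing $\infty$); and it is closed under addition and negation, so the eventual topology will be a group topology and it suffices to analyze convergence to $0$.

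Uniqueness is the soft part and I would dispatch it first. In any first countable space the closed sets are exactly the sequentially closed sets, so the topology is completely determined by which sequences converge to which limits. Since second countable implies first countable, two second countable topologies with the same convergent sequences have the same closed sets and therefore coincide; hence there is at most one topology as in the statement, and no group structure is even needed for this step.

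For existence I would produce a metrizable group topology whose null sequences are precisely the $\Rightarrow$-null sequences. One inclusion is easy: if $(x_n)\Rightarrow x$ then $x_n\to x$ in any candidate topology, since evaluation is continuous and $\mathbb N^\dag$ is compact, so a single witness $y$ forces convergence. The substance is the converse together with the verification that $\Rightarrow$ is genuinely topological, i.e. satisfies Kisy\'nski's axioms for a convergence class. Constants and subsequences are handled above; the remaining, decisive requirement is the diagonal (and Urysohn) condition: given a double array with $x_{n,k}\Rightarrow x_n$ as $k\to\infty$ for each $n$, and $x_n\Rightarrow x$, one must extract a diagonal with $x_{n,k_n}\Rightarrow x$. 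I would prove this by assembling the row witnesses $y_n$ and the witness $y$ into a single element $Y\in E(A,C(\mathbb N^\dag\times\mathbb N^\dag,B))$, using the identification $C(\mathbb N^\dag,C(\mathbb N^\dag,B))\cong C(\mathbb N^\dag\times\mathbb N^\dag,B)$ and the half-exact sequence $0\to C_0(\mathbb N,B)\to C(\mathbb N^\dag,B)\xrightarrow{\ev_\infty}B\to 0$ to control the boundary behavior, and then pulling $Y$ back along a sufficiently fast proper diagonal reindexing $\delta\colon\mathbb N^\dag\to\mathbb N^\dag\times\mathbb N^\dag$ with $\delta(\infty)=(\infty,\infty)$; the image $\delta^*Y$ is the desired witness.

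The main obstacle, which I would attack most carefully, is precisely this assembly: functoriality alone produces countably many unrelated witnesses $y_n$, and manufacturing a single $Y$ that restricts to all of them at once is a genuine uniformization problem rather than a formal one. I expect to resolve it simultaneously with the construction of a countable decreasing neighborhood base $V_1\supseteq V_2\supseteq\cdots$ of $0$, so that by Birkhoff--Kakutani the resulting group topology is given by an invariant metric: having such a base is what lets one choose the diagonal $k_n$ fast enough that $\delta^*Y$ converges, while the diagonal condition is what guarantees that the topology defined by the $V_m$ has $\Rightarrow$ as its convergence (matching the $\Rightarrow$-null sequences $\mathcal N$ to the sequences eventually inside every $V_m$). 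Finally, second countability would follow from separability of $A$ and $B$: a countable dense family of asymptotic morphisms yields a countable dense subgroup of $E(A,B)$, so the invariant metric topology is separable and hence second countable. Combined with the uniqueness step, this produces the unique second countable topology of the statement.
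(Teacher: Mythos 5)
Your uniqueness paragraph is correct and is exactly the paper's (soft) argument: second countability gives first countability, and a first countable topology is determined by its convergent sequences. The existence half, however, has a genuine gap at precisely the point you yourself flag as ``the main obstacle.'' Your plan is to define the topology from the convergence relation $\Rightarrow$, which requires verifying the diagonal/Urysohn-type axioms for a convergence class; and verifying those axioms requires assembling countably many unrelated witnesses $y_n \in E(A, C(\mathbb N^\dag, B))$ into a single witness $Y \in E(A, C(\mathbb N^\dag \times \mathbb N^\dag, B))$. You correctly observe that this is ``a genuine uniformization problem rather than a formal one,'' but you then only say you ``expect to resolve it'' -- no mechanism is given, and neither functoriality of $E(A, \,\cdot\,)$ nor half-exactness applied to $0 \to C_0(\mathbb N, B) \to C(\mathbb N^\dag, B) \to B \to 0$ can produce it: the witnesses $y_n$ bear no relation to one another, and there is no $^*$-homomorphism along which to push or pull that splices them. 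This assembly problem is where all of the actual work in the paper lies, and the paper's logical order is the reverse of yours: the topology is first defined concretely (Definition~\ref{def:top-asymp}, as the weakest topology making $\alpha^*$ continuous for all semiprojective $\alpha \colon A_0 \to A$), and Pimsner's condition is then proved as a theorem \emph{about} that topology (Theorem~\ref{thm:asymp-convergence}). The proof of Theorem~\ref{thm:asymp-convergence} solves the splicing problem using tools your proposal has no substitute for: representation of classes by strong homotopy morphisms over a shape system (Theorem~\ref{thm:diagram-rep}), the factorization of $\alpha^*$ through a \emph{discrete} space (Corollary~\ref{cor:discrete-factor}), which forces a convergent sequence to be eventually constant at each finite stage, and Blackadar-type homotopy stability (Corollary~\ref{cor:homotopy-stability}, Lemma~\ref{lem:factor2}) to upgrade asymptotic homotopies to genuine homotopies that can be concatenated into a single strong homotopy morphism into $C(\mathbb N^\dag, B)$. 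Theorem~\ref{thm:main} is then immediate from Theorems~\ref{thm:countability} and~\ref{thm:asymp-convergence}.

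A second, more localized error: your plan to realize the topology via Birkhoff--Kakutani as an invariant-\emph{metric} group topology cannot succeed, because the topology in Theorem~\ref{thm:main} is in general not Hausdorff -- the closure of $\{0\}$ is typically a nontrivial subgroup, which is the entire reason the paper introduces $EL(A,B) = E(A,B)/\overline{\{0\}}$; metrizability holds only for this quotient (Theorem~\ref{thm:hausdorff}). At best your construction could yield an invariant pseudometric. Relatedly, your route to second countability (``a countable dense family of asymptotic morphisms yields a countable dense subgroup'') is asserted, not proved, and is far from obvious; the paper instead deduces second countability in Theorem~\ref{thm:countability} from the countability of the discrete factorization spaces of Corollary~\ref{cor:discrete-factor}, which rests on separability of $\mathrm{Hom}(A_n, B)$ in the point-norm topology.
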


This and other properties of the topology are found in Section~\ref{sec:applications}.  In particular, if $A$, $B$, and $D$ are separable $C^*$-algebras, then $E(A, B)$ is a topological group and the product $E(A, B) \times E(B, D) \rightarrow E(A, D)$ is continuous, generalizing \mbox{\cite[Theorem~6.8]{Schochet01}} and in analogy with \cite[Theorem~3.5]{Dadarlat05}.

The relevance of the closure of $\{0\}$ was first highlighted in the work of Brown on the UCT in the early 1980s \cite{Brown84}.  When $A$ satisfies the UCT, the closure of $\{0\}$ can be identified with the subgroup of $\mathrm{Ext}_\mathbb{Z}^1(K_{*+1}(A), K_*(B))$ consisting of pure extensions, as proved by Schochet in \cite[Theorem~3.3]{Schochet02}, assuming nuclearity.  The nuclearity condition was removed by Dadarlat in \cite[Section 4]{Dadarlat05}.

When $A$ satisfies the UCT, the group $KL(A, B)$ is defined as the quotient of $KK(A,B)$ by this subgroup, and this quotient group plays a central role in classifying $^*$-homomorphisms, as first shown by R{\o}rdam \cite{Rordam95}.  In particular, R{\o}rdam proved that two $^*$-homomorphisms that are approximately unitarily equivalent induce the same $KL$-class, while they might not induce the same $KK$-class.  The $KL$-groups remain an indispensable tool in the classification program for nuclear $C^*$-algebras to this day (for example, see \cite{Gong-Lin-etal20, Gong-Lin-etal20a, TWW, cgstw23}). Moreover, the $KL$-groups capture the limiting behavior of the controlled $KK$-theory groups of Willett and Yu \cite[Theorem~1.2]{Willett-Yu21}.

Without the UCT condition, Dadarlat defined $KL(A,B)$ as the quotient of $KK(A,B)$ by the closure of $\{0\}$.
We define the Hausdorffized $E$-theory group $EL(A, B)$ in an analogous way.
This is a totally disconnected, separable, and completely metrizable topological group, just like $KL(A,B)$ (see Section~\ref{sec:e-theory}).
Further, the product on $E$-theory descends to a continuous product on $EL$-theory.
We also prove that two separable $C^*$-algebras are $E$-equivalent if and only if they are $EL$-equivalent.
This generalizes and gives a new proof of \cite[Corollary~5.2]{Dadarlat05}, stating that $KL$- and $KK$-equivalence coincide for nuclear $C^*$-algebras, with an argument that avoids the use of the Kirchberg--Phillips classification theorem.

We also examine the behavior of Hausdorffized $E$-theory
under direct limits.  Milnor's $\lim^1$-sequence provides
compatibility of $E$-theory with direct limits in the first variable:
for an inductive system $(A_n)_{n=1}^\infty$ of separable
$C^*$-algebras, there is a natural exact sequence
\[
  \begin{tikzcd}
    0 \arrow{r} & \varprojlim^1 E(A_n, SB) \arrow{r} & E(\varinjlim
    A_n, B) \arrow{r} & \varprojlim E(A_n, B) \arrow{r} & 0
  \end{tikzcd}
\]
where $SB = C_0(\mathbb R) \otimes B$ denotes the suspension of $B$.
(See \cite[Theorem 7.1]{Schochet84a} for a more general treatment.)
It turns out that the $\varprojlim^1$ term is always mapped into the
closure of $\{0\}$, which leads to the continuity of the functor
$EL(\,\cdot\,, B)$.  As a consequence, we obtain a new continuity
result for $KL$.

\begin{theorem}\label{thm:cont-EL}
  If $(A_n)_{n=1}^\infty$ is an inductive system of separable
  $C^*$-algebras and $B$ is a separable $C^*$-algebra, then the natural map
  \begin{equation*}
    EL\big(\varinjlim A_n, B\big) \longrightarrow \,\varprojlim EL(A_n, B)
  \end{equation*}
  is an isomorphism.  In particular, if each $A_n$ is nuclear, then
  the natural map
  \begin{equation*}
    KL\big(\varinjlim A_n, B\big) \longrightarrow \,\varprojlim KL(A_n, B)
  \end{equation*}
  is an isomorphism.
\end{theorem}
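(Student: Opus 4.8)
The plan is to write $A = \varinjlim A_n$ and to identify $EL(A,B)$ with the Hausdorffization of the inverse-limit group $\varprojlim E(A_n,B)$, and then to compare the latter with $\varprojlim EL(A_n,B)$. Let $\phi\colon E(A,B)\to\varprojlim E(A_n,B)$ denote the natural map in the Milnor sequence, so that $\phi$ is surjective with kernel $\varprojlim^1 E(A_n,SB)$. Two inputs drive the argument: first, that $\phi$ is a quotient map of topological groups once $\varprojlim E(A_n,B)$ carries the inverse-limit topology (i.e.\ that the Milnor sequence is compatible with the topologies, as established in Section~\ref{sec:applications}); and second, the key fact that $\ker\phi=\varprojlim^1 E(A_n,SB)$ lies in $\overline{\{0\}}_{E(A,B)}$. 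Since a continuous open surjection of topological groups carries the closure of $0$ onto the closure of $0$, and since $\ker\phi\subseteq\overline{\{0\}}_{E(A,B)}$, one obtains $\phi(\overline{\{0\}}_{E(A,B)})=\overline{\{0\}}_{\varprojlim E(A_n,B)}$ together with $\ker\phi\subseteq\overline{\{0\}}_{E(A,B)}$, whence $\phi$ induces an isomorphism $EL(A,B)\cong\varprojlim E(A_n,B)\,/\,\overline{\{0\}}_{\varprojlim E(A_n,B)}$. In other words, $EL(A,B)$ is the Hausdorffization of $\varprojlim E(A_n,B)$.

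Next I would compute the closure of $0$ in the inverse limit. As $\varprojlim E(A_n,B)$ is a subspace of the product $\prod_n E(A_n,B)$, whose closure of $0$ is $\prod_n\overline{\{0\}}_{E(A_n,B)}$, taking intersections gives $\overline{\{0\}}_{\varprojlim E(A_n,B)}=\varprojlim\overline{\{0\}}_{E(A_n,B)}$. Substituting this into the isomorphism above and unwinding definitions exhibits the map of the theorem as the canonical comparison map $\varprojlim E(A_n,B)\,/\,\varprojlim\overline{\{0\}}_{E(A_n,B)}\to\varprojlim\big(E(A_n,B)/\overline{\{0\}}_{E(A_n,B)}\big)$ associated with the short exact sequence of towers $0\to\overline{\{0\}}_{E(A_n,B)}\to E(A_n,B)\to EL(A_n,B)\to 0$. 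Injectivity is then immediate, since the kernel of $\varprojlim E(A_n,B)\to\varprojlim EL(A_n,B)$ is exactly $\varprojlim\overline{\{0\}}_{E(A_n,B)}$, which we have just matched with $\overline{\{0\}}_{\varprojlim E(A_n,B)}$.

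The heart of the matter, and the step I expect to be the main obstacle, is surjectivity: every compatible family $(\bar x_n)\in\varprojlim EL(A_n,B)$ must be lifted to a compatible family in $\varprojlim E(A_n,B)$. Lifting each $\bar x_n$ to some $x_n\in E(A_n,B)$, the incompatibilities $x_n-\rho_n(x_{n+1})$ (with $\rho_n$ the structure maps) land in $\overline{\{0\}}_{E(A_n,B)}$, so the obstruction to correcting them into a genuine compatible family is a class in $\varprojlim^1\overline{\{0\}}_{E(A_n,B)}$, and the task is to kill it. Here the purely homological reduction does not suffice, since $\varprojlim^1$ of an arbitrary tower of abelian groups need not vanish; it is the topology, namely that the discrepancies are limits of $0$, that must force the obstruction to die. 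My plan is to exploit two features. First, the surjection $\overline{\{0\}}_{E(A,B)}\twoheadrightarrow\varprojlim\overline{\{0\}}_{E(A_n,B)}$ coming from the previous paragraph shows that the eventual-image subtower of $(\overline{\{0\}}_{E(A_n,B)})$ has surjective transition maps and the same inverse limit, which handles the Mittag--Leffler part of the obstruction. Second, I would use the completeness of the Polish groups $EL(A_n,B)$ to run a successive-approximation argument: the corrections are chosen so that the partial lifts form a convergent family in the complete metrics, with the residual errors absorbed into the closures of $0$, producing in the limit an honest element of $\varprojlim E(A_n,B)$ (equivalently, of $E(A,B)$ via the surjectivity of $\phi$) that maps to $(\bar x_n)$. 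This is where the analytic content of the theorem resides.

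Finally I would upgrade the algebraic isomorphism to a topological one and deduce the $KL$ statement. Both $EL(A,B)$ and $\varprojlim EL(A_n,B)$ are Polish groups---the latter as a closed subgroup of the Polish product $\prod_n EL(A_n,B)$---and the comparison map is a continuous homomorphism, so once bijectivity is in hand the open mapping theorem for Polish groups makes it a topological isomorphism. For the consequence, if each $A_n$ is nuclear then $A=\varinjlim A_n$ is nuclear, and the natural transformation $E(\,\cdot\,,B)\to KK(\,\cdot\,,B)$ is an isomorphism on nuclear first variable; because both topologies are characterized by the same Pimsner-type convergence criterion of Theorem~\ref{thm:main}, this isomorphism is a homeomorphism, hence descends to compatible isomorphisms $EL(A_n,B)\cong KL(A_n,B)$ and $EL(A,B)\cong KL(A,B)$. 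Transporting the first assertion across these identifications yields the stated isomorphism $KL(\varinjlim A_n,B)\cong\varprojlim KL(A_n,B)$.
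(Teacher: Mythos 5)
Your Milnor-sequence route is genuinely different from the paper's, but it has a circularity problem: the two ``inputs'' you take as given are, in the paper's logical structure, \emph{consequences} of the very theorem you are proving, and you supply no independent argument for either. First, you assert that $\phi\colon E(A,B)\to\varprojlim E(A_n,B)$ is an open quotient map of topological groups ``as established in Section~\ref{sec:applications}.'' Nothing of the sort is established there: Theorem~\ref{thm:top-group} and Theorem~\ref{thm:joint-continuity} give only that $\phi$ is a continuous homomorphism, and openness of $\phi$ is not available in advance (one can check that it \emph{follows} from Theorem~\ref{thm:cont-EL}, which is exactly the problem). Second, your ``key fact'' that $\ker\phi=\varprojlim^1 E(A_n,SB)$ lies in $\overline{\{0\}}$ is stated in the paper's introduction only as something that ``turns out'' to hold, i.e.\ as a byproduct of the continuity theorem; proving it requires relating the arbitrary inductive system $(A_n)$ to a shape system, which is precisely the nontrivial content of Lemma~\ref{lem:double-lim} and Theorem~\ref{thm:t0-continuity}. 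Assuming both inputs, your derivation of injectivity is correct, but the inputs carry all the weight.

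The step you yourself flag as the heart---surjectivity---is moreover left as a sketch whose mechanism does not work as described. The obstruction lies in $\varprojlim^1$ of the tower $\bigl(\overline{\{0\}}_{E(A_n,B)}\bigr)_n$, and this is a purely algebraic object: the subspace topology on $\overline{\{0\}}_{E(A_n,B)}$ (the closure of a point in a topological group) is \emph{indiscrete}, so the completeness of the Polish groups $EL(A_n,B)$ gives no control whatsoever over correction terms that are constrained to lie in these subgroups; there is no metric in which your successive approximations could converge. Likewise, the existence of a surjection $\overline{\{0\}}_{E(A,B)}\to\varprojlim\overline{\{0\}}_{E(A_n,B)}$ (which itself already presupposes openness of $\phi$) does not imply the Mittag--Leffler condition for that tower. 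By contrast, the paper bypasses the Milnor sequence entirely: it proves the homeomorphism $[[A,B]]_{\mathrm{Hd}}\cong\varprojlim\,[[A_n,B]]_{\mathrm{Hd}}$ (Theorem~\ref{thm:t0-continuity}) by interpolating a shape system into the given inductive system (Lemma~\ref{lem:double-lim}) and using the projective-limit decomposition over shape systems (Theorem~\ref{thm:hausdorff}); Theorem~\ref{thm:cont-EL} is then the special case of the system $(SA_n\otimes\mathcal{K})_n$. Your peripheral steps---the computation $\overline{\{0\}}_{\varprojlim E(A_n,B)}=\varprojlim\overline{\{0\}}_{E(A_n,B)}$, the open-mapping upgrade, and the nuclear $KL$ deduction via the uniqueness clause of Theorem~\ref{thm:main}---are fine, but they are the easy parts.
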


While we have stated our main results in terms of $E$-theory, the main
body of the paper is actually concerned with the set $[[A,B]]$ of
homotopy classes of asymptotic morphisms from $A$ to $B$ (see
Section~\ref{sec:prelim-asymp}).  As we remind the reader in
Section~\ref{sec:e-theory}, $E(A,B)$ is defined as $[[SA\otimes
\mathcal{K}, SB\otimes \mathcal{K}]]$, where $\mathcal K$ denotes the $C^*$-algebra of compact operators on a separable infinite dimensional Hilbert space.  However, the group structure
furnished by stabilizing and suspending is not relevant to our
development of the topology in Theorem~\ref{thm:main}.  The topology on $[[A, B]]$ is introduced in Section~\ref{sec:top}, and its Hausdorffization is studied in Section~\ref{sec:Hd}.

The various definitions of the topology on $KK(A,B)$ are typically
built using representation theoretic pictures of $KK$-theory.  In the
absence of such a description of $E$-theory, we opt for a new approach
that employs the shape theoretic methods of \cite{Dadarlat94} and a
generalization of Blackadar's homotopy lifting property for
semiprojective $C^*$-algebras from \cite{Blackadar16} (see
Theorem~\ref{thm:lifting}).  Shape theory for $C^*$-algebras was first
introduced by Effros and Kaminker in \cite{Effros-Kaminker86} and
refined by Blackadar in \cite{Blackadar85}.  Building on the
techniques of \cite{Dadarlat94}, which related shape theory and
$E$-theory, we prove the following result. (See
Section~\ref{sec:applications} for the relevant definitions.)

\begin{theorem}
  \label{thm:haus-shape}
  There is an equivalence between the shape category and the
  Hausdorffized asymptotic morphism category.
\end{theorem}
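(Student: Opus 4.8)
The plan is to construct an identity-on-objects functor $\Theta\colon \AM \to \sh$ from the asymptotic morphism category to the shape category, to show it is continuous and hence factors through the Hausdorffization $\AMH$, and finally to prove that the induced functor $\AMH \to \sh$ is a bijection on every morphism set; since both categories have the separable $C^*$-algebras as objects, this gives the asserted equivalence. Throughout, fix separable $A$ and $B$ and a shape system $A = \hlim A_n$ with each $A_n$ semiprojective (such a system exists by the shape theory of Blackadar and Dadarlat), so that by definition $\Hom_{\mathsf{sh}}(A,B) = \varprojlim_n [A_n, B]$, the inverse limit over the structure maps $\iota_n \colon A_n \to A$ of the sets $[A_n, B]$ of homotopy classes of $^*$-homomorphisms.

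The functor $\Theta$ arises from two natural maps. First, for each semiprojective $A_n$, Theorem~\ref{thm:lifting} provides a bijection $[A_n, B] \xrightarrow{\ \cong\ } [[A_n, B]]$, identifying homotopy classes of $^*$-homomorphisms with homotopy classes of asymptotic morphisms. Second, precomposition with the $\iota_n$ gives a restriction map $[[A, B]] \to \varprojlim_n [[A_n, B]]$. Composing the latter with the coordinatewise inverse of the former defines $\Theta\colon [[A, B]] \to \varprojlim_n [A_n, B] = \Hom_{\mathsf{sh}}(A, B)$. Functoriality and compatibility with the composition product are then formal consequences of the naturality of these constructions.

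Next I would check that $\Theta$ is continuous for a Hausdorff topology on the target. For semiprojective $A_n$ the topology on $[[A_n, B]]$ is discrete: by the convergence criterion of Theorem~\ref{thm:main} a convergent sequence is realized by a single class over $C(\mathbb N^\dag, B)$, which by Theorem~\ref{thm:lifting} lifts to a path of $^*$-homomorphisms and is therefore eventually constant. Hence the bijections above are homeomorphisms onto discrete spaces, and $\Hom_{\mathsf{sh}}(A, B) = \varprojlim_n [A_n, B]$ inherits the Hausdorff, totally disconnected, pro-discrete topology. The restriction map $[[A, B]] \to \varprojlim_n [[A_n, B]]$ is continuous, again by Theorem~\ref{thm:main}, so $\Theta$ is a continuous map into a Hausdorff space and factors uniquely as $\overline\Theta\colon \hausdorffize[[A, B]] \to \Hom_{\mathsf{sh}}(A, B)$, that is, as a functor $\AMH \to \sh$.

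It remains to prove $\overline\Theta$ is bijective on morphisms, and this is where the real work lies. Surjectivity is the shape-to-asymptotic construction of \cite{Dadarlat94}: a compatible family $([\psi_n])_n \in \varprojlim_n [A_n, B]$ comes equipped with homotopies $\psi_{n+1}|_{A_n} \simeq \psi_n$, which I splice together, suitably reparametrized over $[1, \infty)$, into a mapping-telescope asymptotic morphism $A = \varinjlim A_n \rightsquigarrow B$ mapping onto the given family. For injectivity, the kernel of the restriction map $[[A, B]] \to \varprojlim_n [[A_n, B]]$ is controlled by a $\varprojlim^1$ term --- the same phenomenon that, after suspension and stabilization, becomes the term $\varprojlim^1_n E(A_n, SB)$ of the introduction's Milnor sequence --- so two classes with $\overline\Theta(\phi) = \overline\Theta(\phi')$ differ by such a class. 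The decisive fact, which I expect to be the main obstacle, is that this $\varprojlim^1$ term is contained in the closure of $\{0\}$ for the topology of Theorem~\ref{thm:main} and is therefore killed by $\hausdorffize$. Establishing it requires combining both main tools: Theorem~\ref{thm:lifting} to represent a kernel class by a coherent, eventually null family of $^*$-homomorphisms, and the Pimsner-condition criterion of Theorem~\ref{thm:main} to realize the resulting telescope as a limit of trivial classes, placing it in $\overline{\{0\}}$. Granting this, $\overline\Theta$ is a natural bijection on each morphism set; being the identity on objects and compatible with composition, it assembles into the desired equivalence $\AMH \simeq \sh$.
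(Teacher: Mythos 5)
Your overall strategy---restrict along a shape system to get a functor from asymptotic morphisms to shape, factor it through the Hausdorffization, then check bijectivity on hom-sets---is the mirror image of the paper's proof, which constructs the inverse functor $\sh\to\AMH$ as a Hausdorffized homotopy limit (Theorems~\ref{thm:hlim-diag-commute} and~\ref{thm:haus-functor-equiv}). But there are two genuine gaps. First, everything in your proposal rests on choosing a shape system ``with each $A_n$ semiprojective.'' A shape system has semiprojective \emph{connecting maps} $\alpha_n$; the algebras $A_n$ themselves need not be semiprojective, and whether every separable $C^*$-algebra is even an inductive limit of semiprojective $C^*$-algebras is a well-known open problem of Blackadar. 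Your claimed bijection $[A_n,B]\cong[[A_n,B]]$, the discreteness of $[[A_n,B]]$, and the identification $\Hom_{\sh}(A,B)=\varprojlim_n\,[A_n,B]$ all require semiprojectivity of $A_n$ itself (Theorem~\ref{thm:lifting} and Corollary~\ref{cor:homotopy-stability} with $\alpha=\id_{A_n}$), so none of them is available. With only semiprojective connecting maps these facts survive only in relative form, with an index shift: asymptotic classes out of $A_{n+1}$ are represented by $^*$-homomorphisms only after composing with $\alpha_n$ (Lemma~\ref{lem:factor}), asymptotic homotopy implies genuine homotopy only after composing with $\alpha_n$ (Lemma~\ref{lem:factor2}), and $\alpha_n^*$ merely \emph{factors through} a discrete space (Proposition~\ref{prop:discrete-factor}). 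This is precisely why the paper's projective-limit decomposition (Theorem~\ref{thm:hausdorff}) is over the quotients $\Homah{A_n}{B}$ with their quotient topologies, not over discrete sets $[A_n,B]$. Relatedly, $\Hom_{\sh}$ between two shape systems is $\varprojlim_n \varinjlim_m\,[A_n,B_m]$, not $\varprojlim_n\,[A_n,B]$; identifying the two, and verifying that your $\Theta$ respects the Connes--Higson composition, requires factorization through finite stages of a shape system for $B$ (Lemma~\ref{lem:sp-homotopy-lift}) and is not ``formal.''

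Second, the injectivity of $\overline\Theta$ is the actual content of the theorem, and your proposal does not prove it: you name it as ``the decisive fact,'' state that you ``expect'' it to follow from Theorem~\ref{thm:lifting} and Pimsner's condition, and then proceed ``granting this.'' That granted statement is exactly the paper's Proposition~\ref{prop:homotopy-limit} (independence of the homotopy limit from the choice of homotopies up to $\sim_{\mathrm{Hd}}$), together with the injectivity half of Theorem~\ref{thm:hausdorff} and the faithfulness argument of Theorem~\ref{thm:haus-functor-equiv}; its proof occupies much of Sections~\ref{sec:Hd} and~\ref{sec:hausd-homot-limit} and needs Lemma~\ref{lem:t0-equiv-homotopic} plus a careful splicing of homotopies into a strong homotopy morphism into $C(\mathbb N^\dag, B)$ before Theorem~\ref{thm:asymp-convergence} can be applied. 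Note also that $[[A,B]]$ and $[[A_n,B]]$ carry no group structure (groups appear only after suspending and stabilizing), so ``kernel'' and ``$\varprojlim^1$'' are not literally meaningful at this level; the statement must be formulated and proved for the relation $\sim_{\mathrm{Hd}}$ itself. As written, then, the proposal derives the routine parts from an unjustified premise and assumes the hard part.
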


Finally, we give a brief heuristic description of the new methods
developed in this paper.  Dadarlat showed in \cite{Dadarlat94} that given a
homotopy commuting diagram
\[
  \begin{tikzcd}
    \cdots \ar[r] & A_{n-1} \ar[d] \ar[r] & A_n \ar[d] \ar[r] & A_{n+1} \ar[d] \ar[r] & \cdots &[-2.7em] \varinjlim A_n \ar[d, dashed, "\approx"]\\
    \cdots \ar[r] & B_{n-1} \ar[r] & B_n \ar[r] & B_{n+1} \ar[r] & \cdots & \varinjlim B_n   \end{tikzcd}
\]
together with a distinguished choice of homotopy at each stage, there is a limiting asymptotic morphism $\varinjlim A_n \xrightarrow\approx \varinjlim B_n$.  We prove
that this homotopy limit is independent of the choice of homotopies up to Hausdorffization (Proposition~\ref{prop:homotopy-limit}).  Once restricted to the setting where the
inductive systems are shape systems, this homotopy limit functor
provides the equivalence in Theorem~\ref{thm:haus-shape}.  In fact,
the systematic use of this idea is behind many of the results in this
paper.

\begin{acknowledgements}
	The second author was partially supported by NSF Grant DMS-2000129.  Each author would like to thank the other's university for their hospitality during the visits where much of this project was completed.
\end{acknowledgements}

\setcounter{theorem}{0}
\numberwithin{theorem}{section}

\section{Asymptotic morphisms and semiprojectivity}\label{sec:prelim}

This preliminary section serves to set our notation and recall some results from \cite{Dadarlat94} relating asymptotic morphisms to shape theory.  The only somewhat new result is Corollary~\ref{cor:homotopy-stability}, which is a slight variation of Blackadar's result in \cite[Corollary~4.3]{Blackadar16} and has nearly the same proof.

\subsection{Asymptotic morphisms}\label{sec:prelim-asymp}

For $C^*$-algebras $A$ and $B$, an \emph{asymptotic morphism} $\phi
\colon A \xrightarrow\approx B$ is a collection of
self-adjoint\footnote{That is, $\phi(a^*)=\phi(a)^*$ for all $a\in A$.} linear maps $(\phi_t \colon A \rightarrow B)_{t \geq 0}$, indexed by the space $\mathbb R_+$ of non-negative real numbers, such that
\begin{enumerate}
	\item $t \mapsto \phi_t(a)$ is continuous for all $a \in A$, and
	\item $\displaystyle \lim_{t \rightarrow \infty} \|\phi_t(ab) - \phi_t(a) \phi_t(b)\|$ for all $a, b \in A$.
\end{enumerate}

Note that every $^*$-homomorphism $\phi \colon A \rightarrow B$ may be
viewed as an asymptotic morphism that is constant in $t$.  If $A$, $B$, and $D$ are $C^*$-algebras,  $\phi \colon A \xrightarrow\approx B$ is an asymptotic morphism, and $\psi \colon B \rightarrow D$ is a $^*$-homomorphism, we may define an asymptotic morphism $\psi \circ \phi \colon A \xrightarrow\approx D$ by $(\psi \circ \phi)_t = \psi \circ \phi_t$.  Similarly, if $\psi \colon D \rightarrow A$ is a $^*$-homomorphism, we have an asymptotic morphism $\phi \circ \psi \colon D \xrightarrow\approx B$ given by $(\phi \circ \psi)_t = \phi_t \circ \psi$.

We emphasize that the $\phi_t$ are not assumed to be bounded.  However, as
noted in \cite[p.~102]{Connes-Higson90} (see also
\cite[Proposition~25.1.3]{Blackadar98}), an asymptotic morphism is always ``asymptotically contractive'' in the sense that
\begin{enumerate}\setcounter{enumi}{2}
	\item $\limsup\limits_{t \rightarrow \infty} \|\phi_t(a)\| \leq \|a\|$ for all $a \in A$.
\end{enumerate}
It follows that any asymptotic morphism $\phi \colon A \xrightarrow{\approx} B$ induces a $^*$-homomorphism $\phi_{\rm as} \colon A \rightarrow B_{\rm as}$, where $B_{\rm as} =C_b(\mathbb R_+, B) / C_0(\mathbb R_+, B)$ is the \emph{asymptotic algebra} of $B$.  Conversely, given a $^*$-homomorphism $A \rightarrow B_{\rm as}$ and a self-adjoint linear lift \mbox{$\Phi \colon A \rightarrow C_b(\mathbb R_+, B)$}, there is an induced asymptotic morphism $\phi \colon A \xrightarrow\approx B$ defined by \mbox{$\phi_t(a) = \Phi(a)(t)$} for $a \in A$ and $t \in \mathbb R_+$.  Then $\Phi$ lifts $\phi_{\rm as}$.

Two asymptotic morphisms $\phi, \psi \colon A \xrightarrow\approx B$
are \emph{equivalent}, written $\phi \cong \psi$, if
\begin{equation*}
	\lim_{t \rightarrow \infty} \|\phi_t(a) - \psi_t(a)\| = 0
\end{equation*}
for all $a \in A$.  Note that $\phi \cong \psi$ if and only if $\phi_{\rm as} = \psi_{\rm as}$.  We say $\phi$ and $\psi$ are \emph{asymptotically homotopic} if there is an asymptotic morphism $\theta \colon A \xrightarrow\approx C([0, 1], B)$ such that $\ev_0 \circ \theta \cong \phi$ and $\ev_1 \circ \theta \cong \psi$.  We let $[[\phi]]$ denote the equivalence class of an asymptotic morphism $\phi\colon A\xrightarrow{\approx} B$ under asymptotic homotopy and let $[[A, B]]$ denote the set of such equivalence classes.

There is no natural way of composing asymptotic morphisms, but there
is a well-defined composition up to asymptotic homotopy, as shown in
\cite[Proposition 4]{Connes-Higson90} (see also \cite[Theorem~25.3.1]{Blackadar98}).  The composition defined below extends the usual composition of $^*$-homomorphisms and, more generally, the composition of a $^*$-homomorphism with an asymptotic morphism discussed above.  This construction uses separability of the domain algebras and is the main reason separability hypotheses arise in our results.  (The statement in \cite[Theorem~25.3.1]{Blackadar98} asks that all three $C^*$-algebras be separable, but examining the proof shows that the separability of $D$ is not needed.)

\begin{theorem}\label{thm:asymptotic-category}
	If $A$, $B$, and $D$ are $C^*$-algebras with $A$ and $B$ separable and $\phi \colon A \xrightarrow \approx B$ and $\psi \colon B \xrightarrow\approx D$ are asymptotic morphisms, then there is a homeomorphism $r_0 \colon [0, \infty) \rightarrow [0, \infty)$ such that for all homeomorphisms $r \colon [0, \infty) \rightarrow [0, \infty)$ with $r(t) \geq r_0(t)$ for all $t \geq 0$, we have $(\psi_{r(t)} \circ \phi_t)_{t \geq 0}$ is an asymptotic morphism.  Moreover, the homotopy equivalence class of this asymptotic morphism, written $[[ \psi ]] \circ [[\phi]]$, is independent of the choice of $r$.
\end{theorem}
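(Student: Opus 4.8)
The plan is to define the composite by a reparametrized pointwise composition $\chi_t = \psi_{r(t)} \circ \phi_t$ and to control everything through uniform, rather than merely pointwise, versions of the asymptotic conditions. Linearity and self-adjointness of $\chi_t$ are inherited immediately from $\phi_t$ and $\psi_{r(t)}$, so the real content is continuity in $t$ (condition (i)), asymptotic multiplicativity (condition (ii)), and independence of the homotopy class on $r$.

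As a preliminary reduction I would replace $\psi$ by an equivalent asymptotic morphism for which $(s,b)\mapsto\psi_s(b)$ is jointly continuous and the maps $\psi_s$ are equicontinuous on compact subsets of $B$, uniformly in $s$; separability of $B$ is what makes such a regularization available, and altering $\psi$ within its equivalence class changes the composite only up to equivalence. From joint continuity and equicontinuity, a Dini-type argument upgrades the pointwise conditions (ii) and (iii) to the following uniform forms: for every compact $K\subseteq B$ and $\varepsilon>0$ there is $s_0$ with (a) $\|\psi_s(b)\|\le\|b\|+\varepsilon$ for all $s\ge s_0$ and $b\in K$ (whence, by linearity, $\|\psi_s(b)-\psi_s(b')\|\le\|b-b'\|+\varepsilon$ on $K$), and (b) $\|\psi_s(bb')-\psi_s(b)\psi_s(b')\|\le\varepsilon$ for all $s\ge s_0$ and $b,b'\in K$. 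Phrasing these for all $s\ge s_0$, rather than as a $\limsup$, is deliberate: larger reparametrizations only improve the estimates, which will later yield uniformity over families of reparametrizations at no cost.

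Next I would construct $r_0$ by an exhaustion/diagonal argument using separability of $A$. Fix a dense sequence $(a_k)$ in the unit ball of $A$. On each interval $[0,n]$ the sets $K_n=\{\phi_t(a_k): t\in[0,n],\ k\le n\}$ and $K_n\cdot K_n$ are compact, being continuous images of $[0,n]$, so (a) and (b) apply to them. Choosing $r_0$ to be a homeomorphism of $[0,\infty)$ that grows fast enough that $r_0(t)$ exceeds, on $[n,n+1]$, the thresholds $s_0$ attached to $K_{n+1}$ and to a sequence $\varepsilon_n\to 0$, one checks that for any homeomorphism $r\ge r_0$ the defect $\|\chi_t(a_i a_j)-\chi_t(a_i)\chi_t(a_j)\|$ tends to $0$: I would split it as $\psi_{r(t)}\big(\phi_t(a_ia_j)-\phi_t(a_i)\phi_t(a_j)\big)$, controlled by (a) together with asymptotic multiplicativity of $\phi$, plus $\psi_{r(t)}(\phi_t(a_i)\phi_t(a_j))-\psi_{r(t)}(\phi_t(a_i))\psi_{r(t)}(\phi_t(a_j))$, controlled by (b). Asymptotic multiplicativity for arbitrary $a,b\in A$ then follows from the dense case using the equicontinuity bound (a), and continuity of $t\mapsto\chi_t(a)$ follows from joint continuity of $\psi$ and of $r$ on compact $t$-intervals. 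I expect this diagonal bookkeeping — and especially the upgrade of the pointwise conditions to the compact-uniform ones — to be the main obstacle; it is exactly here that separability of $A$ and $B$ is used, while no hypothesis on $D$ enters.

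Finally, for independence of the homotopy class on $r$, let $r,r'\ge r_0$ be two admissible homeomorphisms and set $r_u=(1-u)r+ur'$ for $u\in[0,1]$, so that $r_u\ge r_0$ for every $u$. I would define $\Theta\colon A\xrightarrow{\approx} C([0,1],D)$ by $\Theta_t(a)(u)=\psi_{r_u(t)}(\phi_t(a))$. Joint continuity of $\psi$ and of $(t,u)\mapsto r_u(t)$ shows $\Theta_t(a)\in C([0,1],D)$ and that $t\mapsto\Theta_t(a)$ is continuous, while the multiplicativity defect of $\Theta_t$ is the supremum over $u\in[0,1]$ of the pointwise defects, which tends to $0$ uniformly in $u$ precisely because (a) and (b) hold for all $s\ge r_0(t)$ and each $r_u(t)\ge r_0(t)$. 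Thus $\Theta$ is an asymptotic morphism with $\ev_0\circ\Theta\cong\psi_{r(\cdot)}\circ\phi$ and $\ev_1\circ\Theta\cong\psi_{r'(\cdot)}\circ\phi$, giving the asserted independence; the same interpolation argument also shows the class is unchanged if $\phi$ or $\psi$ is replaced by an equivalent asymptotic morphism, so that $[[\psi]]\circ[[\phi]]$ is well defined.
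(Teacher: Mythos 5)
The paper does not actually prove this theorem: it is quoted from Connes--Higson \cite[Proposition~4]{Connes-Higson90} and Blackadar \cite[Theorem~25.3.1]{Blackadar98}, so your attempt can only be compared with the standard proof in those references. Your architecture does mirror that standard argument --- exhaust $A$ by a dense sequence, form the compact sets $K_n$ of values of $\phi$, choose $r_0$ by a diagonal argument against compact-uniform estimates for $\psi$, and prove $r$-independence by interpolating reparametrizations --- and that architecture is the right one.

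The genuine gap is your ``preliminary reduction.'' You assert, without proof or precise citation, that $\psi$ may be replaced by an equivalent asymptotic morphism that is jointly continuous and equicontinuous, and every single subsequent step (the covering/Dini upgrade to the compact-uniform estimates (a) and (b), the continuity of $t \mapsto \psi_{r(t)}(\phi_t(a))$, and the density argument extending multiplicativity from the $a_k$ to all of $A$) consumes this reduction as input. Under the definition in force here, the maps $\psi_s$ are self-adjoint and linear but \emph{not} assumed bounded, and for such maps the pointwise asymptotic conditions genuinely do not self-improve to uniform ones on compact sets; equicontinuity of a family of linear maps is uniform boundedness, and arranging it within an equivalence class is a theorem, not an observation. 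In fact, without the reduction the literal statement can fail: if $L \colon B \to D$ is an unbounded self-adjoint linear map (Hamel basis construction), then $\psi_s = e^{-s}L$ is an asymptotic morphism equivalent to $0$, yet $\psi_{r(t)}(\phi_t(a)) = e^{-r(t)}L(\phi_t(a))$ can be discontinuous, indeed locally unbounded, in $t$ for \emph{every} homeomorphism $r$, provided the path $t \mapsto \phi_t(a)$ meets points where $L$ blows up. So the regularization is not a convenience but the entire analytic content of the theorem; it is essentially the Connes--Higson lifting lemma that $\psi_{\rm as} \colon B \to D_{\rm as}$ admits a well-behaved self-adjoint linear lift $B \to C_b(\mathbb R_+, D)$, and this is exactly where separability of $B$ enters (naive attempts --- lifting on finite-dimensional subspaces and patching with projections --- produce linear but non-equicontinuous families, since separable $C^*$-algebras carry no bounded approximation structure in general). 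You must either prove this lemma or cite it precisely. Note also that, granting it, what you actually establish is that $\psi'_{r(t)} \circ \phi_t$ is an asymptotic morphism for an equivalent representative $\psi'$; this defines the same class, but it is not the literal assertion about $\psi$ itself, and the counterexample above shows the statement must indeed be read this way.

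Two smaller, fixable points. First, your compact sets $K_n = \{\phi_t(a_k) : t \le n,\ k \le n\}$ are not adequate as written: the dense sequence is not closed under products, so $\phi_t(a_ia_j)$ need not lie in $K_n$, and estimate (a) must be applied to difference sets such as $K_n - K_n \cdot K_n$ rather than to $K_n$ (your parenthetical ``by linearity \dots on $K$'' has this slip). Second, the closing claim that ``the same interpolation argument'' gives invariance under replacing $\psi$ by an equivalent $\psi'$ is too quick: the pointwise equivalence $\|\psi_s(b) - \psi'_s(b)\| \to 0$ must itself be upgraded to uniform convergence on compact sets (requiring $\psi'$ to be regularized as well) and then threaded through an enlarged $r_0$ depending on the pair $(\psi, \psi')$, because the elements $\phi_t(a)$ range over a bounded but non-precompact set; for replacement of $\phi$ the argument really is immediate from equicontinuity of the regularized $\psi$.
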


Following Connes and Higson \cite{Connes-Higson90}, we consider the category  $\AM$, henceforth called the \emph{asymptotic category}, whose objects are separable $C^*$-algebras, whose morphisms from $A$ to $B$ form the set $[[A, B]]$, and whose composition is given by Theorem~\ref{thm:asymptotic-category}.

\subsection{Semiprojectivity and shape systems}\label{sec:prelmin-shape}

We will write $(\underline{A}, \underline{\alpha})$ to denote an
inductive system of $C^*$-algebras
\begin{equation*}
	A_1 \xrightarrow{\alpha_1} A_2 \xrightarrow{\alpha_2} A_3 \xrightarrow{\alpha_3} \cdots.
\end{equation*}
As usual, for integers $n > m \geq 1$, we write 
\begin{equation*}
	\alpha_{n,m } = \alpha_{n-1} \circ \cdots \circ \alpha_m \colon A_m \rightarrow A_n.
\end{equation*}
For $n \geq 1$, we also write $\alpha_{n, n} = \mathrm{id}_{A_n}$ and $\alpha_{\infty, n}$ for the natural map $A_n \rightarrow \varinjlim\, (\underline{A}, \underline{\alpha}$).

Recall from \cite[Definition~2.10]{Blackadar85} that a
$^*$-homomorphism $\alpha \colon A_0 \rightarrow A$ between separable
$C^*$-algebras is \emph{semiprojective} if whenever
$(\underline{B}, \underline{\beta})$ is an inductive system with each
$\beta_n$ surjective and $\phi \colon A \rightarrow \varinjlim
(\underline{B}, \underline{\beta})$ is a
$^*$-homomorphism, there are an integer $n \in \mathbb N$ and a
$^*$-homomorphism $\tilde \phi \colon A_0 \rightarrow B_n$ such that
$\beta_{\infty, n} \circ \tilde \phi = \phi \circ \alpha$.  A
\emph{shape system} for a $C^*$-algebra $A$ is an inductive system
$(\underline{A}, \underline{\alpha})$ such that each $\alpha_n$ is
semiprojective and $\varinjlim\, (\underline{A}, \underline{\alpha})$ is
isomorphic to $A$.  Every separable $C^*$-algebra admits a shape
system $(\underline{A}, \underline{\alpha})$; in fact, one may take
each $\alpha_n$ to be surjective. See \cite[Theorem~4.3]{Blackadar85}.

If $A$, $B$, and $D$ are separable $C^*$-algebras, 
$\alpha \colon A \rightarrow B$ and $\beta \colon B \rightarrow D$ are
$^*$-homomorphisms, and either $\alpha$ or $\beta$ is
semiprojective, then $\beta \circ \alpha$ is also semiprojective.  We will
use this to show every semiprojective $^*$-homomorphism factors as a composition of two semiprojective $^*$-homomorphisms.

\begin{lemma}\label{lem:semiproj-factor}
	If $A_0$ and $A$ are separable $C^*$-algebras and $\alpha \colon A_0 \rightarrow A$ is a semiprojective $^*$-homomorphism, then there are a separable $C^*$-algebra $A_1$ and semiprojective $^*$-homomorphisms $\alpha_0 \colon A_0 \rightarrow A_1$ and $\alpha_1 \colon A_1 \rightarrow A$ such that $\alpha = \alpha_1 \circ \alpha_0$.
\end{lemma}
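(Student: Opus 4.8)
The plan is to produce the factorization inside a shape system for $A$. First I would invoke \cite[Theorem~4.3]{Blackadar85} to fix a shape system $(\underline{B}, \underline{\gamma})$ for $A$ in which every connecting map $\gamma_n$ is semiprojective \emph{and surjective}, identifying $A$ with $\varinjlim (\underline{B}, \underline{\gamma})$. Since $\alpha$ is semiprojective and the system $(\underline{B}, \underline{\gamma})$ has surjective connecting maps, I would apply the definition of semiprojectivity to the $^*$-homomorphism $\mathrm{id}_A \colon A \rightarrow \varinjlim (\underline{B}, \underline{\gamma})$. This yields an integer $n$ and a $^*$-homomorphism $\tilde{\phi} \colon A_0 \rightarrow B_n$ such that $\gamma_{\infty, n} \circ \tilde{\phi} = \mathrm{id}_A \circ \alpha = \alpha$.

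A first attempt would be to take $\gamma_{\infty, n}$ and $\tilde{\phi}$ as the two factors. The map $\gamma_{\infty, n}$ is already semiprojective: it equals $\gamma_{\infty, n+1} \circ \gamma_n$ with $\gamma_n$ semiprojective, so the composition fact recalled just before the lemma applies. The difficulty --- and this is the only genuine obstacle --- is that the lifted map $\tilde{\phi}$ carries no semiprojectivity of its own, so this naive splitting need not meet the conclusion. The fix is to absorb a single connecting map of the shape system into $\tilde{\phi}$, exploiting that each $\gamma_n$ is semiprojective.

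Concretely, I would set $A_1 = B_{n+1}$, take $\alpha_0 = \gamma_n \circ \tilde{\phi} \colon A_0 \rightarrow B_{n+1}$, and take $\alpha_1 = \gamma_{\infty, n+1} \colon B_{n+1} \rightarrow A$. Then $\alpha_0$ is semiprojective because $\gamma_n$ is semiprojective and the composition of a semiprojective $^*$-homomorphism with any $^*$-homomorphism is again semiprojective; and $\alpha_1$ is semiprojective by the same reasoning, since $\gamma_{\infty, n+1} = \gamma_{\infty, n+2} \circ \gamma_{n+1}$ with $\gamma_{n+1}$ semiprojective. Finally $\alpha_1 \circ \alpha_0 = \gamma_{\infty, n+1} \circ \gamma_n \circ \tilde{\phi} = \gamma_{\infty, n} \circ \tilde{\phi} = \alpha$, which gives the desired factorization. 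Everything beyond the one-step bump from $B_n$ to $B_{n+1}$ reduces to the two-line composition fact and the existence of surjective shape systems, both recalled in the text.
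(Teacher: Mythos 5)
Your proof is correct and follows essentially the same route as the paper's: fix a shape system for $A$ with surjective (hence semiprojective) connecting maps, use semiprojectivity of $\alpha$ to lift against the system, and then shift one stage up so that both factors inherit semiprojectivity from the connecting maps via the composition fact. The definitions of $A_1$, $\alpha_0$, and $\alpha_1$ coincide with the paper's choices exactly.
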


\begin{proof}
	Fix a shape system $(\underline A', \underline \alpha')$ for $A$ with each $\alpha'_n$ surjective.  Because $\alpha$ is semiprojective, there are an integer $n \geq 1$ and a $^*$-homomorphism $\tilde\alpha \colon A \rightarrow A'_n$ such that $\alpha = \alpha'_{\infty, n} \circ \tilde\alpha$.  Define $A_1 = A'_{n+1}$, $\alpha_0 = \alpha'_n \circ \tilde\alpha$, and $\alpha_1 = \alpha'_{\infty, n+1}$.  Then $\alpha_0$ and $\alpha_1$ are semiprojective since $\alpha'_n$ and $\alpha'_{n+1}$ are.
\end{proof}

Shape systems provide a convenient formalism for defining asymptotic
morphisms, as first noted in \cite{Dadarlat94}.

\begin{definition}
  \label{def:homotopy-morphisms}
  Given inductive systems $(\underline{A}, \underline{\alpha})$ and
  $(\underline{B}, \underline{\beta})$, a \emph{strong homotopy
    morphism}  $(f, \underline \phi,
  \underline h) \colon (\underline A, \underline \alpha)
  \xrightarrow\approx (\underline B, \underline \beta)$ consists of a
  strictly increasing map $f \colon \mathbb N \rightarrow \mathbb N$
  and sequences of $^*$-homomorphisms
  \[ \underline \phi = \big(\phi \colon A_n \rightarrow
    B_{f(n)}\big)_{n=1}^\infty \quad \text{and} \quad \underline h = \big(h_n \colon
    A \rightarrow C([0, 1], B_{f(n+1)})\big)_{n=1}^\infty \]
  such that for all $n \in \mathbb N$, $\ev_0 \circ h_n =
  \beta_{f(n+1), f(n)} \circ \phi_n$ and $\ev_1 \circ h_n =
  \phi_{n+1}\circ \alpha_{n+1, n}$. This can be visualized as a diagram
  \[
    \begin{tikzcd}[row sep = normal, column sep = 5pt]
      A_1 \arrow{rr}{\alpha_1} \arrow{dd}{\phi_1} \arrow{dr}{h_1} & & A_2 \arrow{rr}{\alpha_2} \arrow{dd}{\phi_2} \arrow{dr}{h_2} & & A_3 \arrow{rr}{\alpha_3} \arrow{dd}{\phi_3} \arrow{dr}{h_3} & & \cdots \\
      & C([0,1], B_{f(2)}) \arrow[shift left = 2pt]{dr}{\ev_1}
      \arrow[shift right =2pt ]{dr}[swap]{\ev_0} & & C([0, 1],
      B_{f(3)}) \arrow[shift left = 2pt]{dr}{\ev_1}
      \arrow[shift right =2pt ]{dr}[swap]{\ev_0}& & C([0, 1],
      B_{f(4)}) \arrow[shift left = 2pt]{dr}{\ev_1}
      \arrow[shift right =2pt ]{dr}[swap]{\ev_0}& \cdots \\
      B_{f(1)} \arrow{rr}[swap]{\beta_{f(2),f(1)}} & & B_{f(2)}
      \arrow{rr}[swap]{\beta_{f(3), f(2)}}& & B_{f(3)}
      \arrow{rr}[swap]{\beta_{f(4),f(3)}}& & \cdots
    \end{tikzcd}
  \]
  with commuting triangles.  A \emph{homotopy morphism} of inductive
  systems is a pair $(f, \underline \phi)$ satisfying the above
  conditions for some sequence of homotopies $\underline h$.  In either case, when $f = \id_{\mathbb N}$, we will drop it from the notation, writing $\underline \phi$ and $(\underline \phi, \underline h)$ in place of $(\mathrm{id}_\mathbb N, \underline \phi)$ and $(\mathrm{id}_\mathbb N, \underline \phi, \underline h)$.
\end{definition}

Dadarlat showed in \cite[Section~2]{Dadarlat94} that a strong homotopy morphism has a \emph{homotopy limit,} defined as an asymptotic morphism on the corresponding inductive limit algebras. We briefly recall the definition of this functor as it is central to this paper. \begin{definition}
  \label{def:homotopy-limit}
  Given a strong homotopy morphism
  \begin{equation*}
    (f, \underline \phi, \underline h) \colon (\underline A, \underline \alpha) \xrightarrow\approx (\underline B, \underline \beta),
  \end{equation*}
  let $A = \varinjlim\,(\underline A, \underline \alpha)$ and $B =
  \varinjlim\, (\underline B, \underline \beta)$.  For $n \in \mathbb N$, define
  $\Phi_n \colon A_n \rightarrow C_b(\mathbb R_{\geq n}, B)$ by
  \begin{equation*}
    \Phi_n(a)(t) = \beta_{\infty, f(m+1)}\Big(
    h_m\big(
    \alpha_{m, n}(a)
    \big)(t - m)
    \Big)
  \end{equation*}
  for $m \geq n$ and $m \leq t < m + 1$.  If $\rho_n \colon
  C_b(\mathbb R_{\geq n}, B) \rightarrow C_b(\mathbb R_{\geq (n+1)})$
  is the restriction map, we have a commuting diagram
  \[
    \begin{tikzcd}
      A_1 \arrow{r}{\alpha_1} \arrow{d}{\Phi_1} & A_2 \arrow{r}{\alpha_2} \arrow{d}{\Phi_2} & A_3 \arrow{r}{\alpha_3} \arrow{d}{\Phi_3} & \hskip 10pt \cdots &[-20pt] A \arrow{d}{\phi_{\rm as}}\\[5pt]
      C_b(\mathbb R_{\geq 1}, B) \arrow{r}{\rho_1} & C_b(\mathbb
      R_{\geq 2}, B) \arrow{r}{\rho_2} & C_b(\mathbb R_{\geq 3}, B)
      \arrow{r}{\rho_3} & \hskip 10pt \cdots & B_{\rm as}
    \end{tikzcd}
  \]
  which induces a $^*$-homomorphism $\phi_{\rm as} \colon A
  \rightarrow B_{\rm as}$.  This, in turn, lifts to an asymptotic
  morphism $\phi \colon A \xrightarrow\approx B$.  We define $\hlim \,
  (f, \underline \phi, \underline h ) = \phi$, which is well-defined
  up to equivalence (since $\phi_{\rm as}$ is well-defined).
\end{definition}

The homotopy limit satisfies the expected compatibility property with the sequence of morphisms defining it.

\begin{proposition}\label{prop:limit-factor}
If $(f, \underline \phi, \underline h) \colon (\underline A, \underline \alpha) \xrightarrow\approx (\underline B, \underline \beta)$ is a strong homotopy morphism of inductive systems with homotopy limit $\phi \colon A \xrightarrow\approx B$, then $[[\phi \circ \alpha_{\infty, n}]] = [[\beta_{\infty, f(n)} \circ \phi_n]]$ for all $n \in \mathbb N$. 	
\end{proposition}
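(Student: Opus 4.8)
The plan is to fix $n \in \mathbb N$ and to compare \emph{both} sides with a single concrete representative of $[[\phi \circ \alpha_{\infty, n}]]$ extracted from the map $\Phi_n$ of Definition~\ref{def:homotopy-limit}. Define $\psi \colon A_n \xrightarrow\approx B$ by $\psi_t(a) = \Phi_n(a)(t + n)$ for $t \geq 0$. First I would check that $\psi \cong \phi \circ \alpha_{\infty, n}$: tracing the commuting diagram defining $\phi_{\rm as}$, the composite $\phi_{\rm as} \circ \alpha_{\infty, n} \colon A_n \to B_{\rm as}$ is exactly the class of $t \mapsto \Phi_n(a)(t)$, which is $\psi_{\rm as}$ (the class in $B_{\rm as}$ only records the tail, so the shift by $n$ is immaterial). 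Since $(\phi \circ \alpha_{\infty, n})_{\rm as} = \psi_{\rm as}$ forces $\phi \circ \alpha_{\infty, n} \cong \psi$, and equivalent asymptotic morphisms are asymptotically homotopic (take the constant homotopy), this gives $[[\phi \circ \alpha_{\infty, n}]] = [[\psi]]$.

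The crucial observation is that \emph{every} time-slice $\psi_t$ is an honest $^*$-homomorphism. For $t + n \in [m, m+1)$ one has $\psi_t(a) = \beta_{\infty, f(m+1)}\big(h_m(\alpha_{m, n}(a))(t + n - m)\big)$, so $\psi_t = \beta_{\infty, f(m+1)} \circ \ev_{t + n - m} \circ h_m \circ \alpha_{m, n}$ is a composition of $^*$-homomorphisms; here one uses that $h_m$ is a $^*$-homomorphism into $C([0, 1], B_{f(m+1)})$, so each evaluation $\ev_u \circ h_m$ is again multiplicative. In particular $\psi$ is exactly (not merely asymptotically) multiplicative. A short computation with the boundary condition $\ev_0 \circ h_n = \beta_{f(n+1), f(n)} \circ \phi_n$ shows moreover that $\psi_0 = \Phi_n(\cdot)(n) = \beta_{\infty, f(n)} \circ \phi_n$.

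With this in hand I would produce the homotopy by a linear ``sliding'' of the time parameter: define $\Theta \colon A_n \xrightarrow\approx C([0,1], B)$ by $\Theta_t(a)(s) = \psi_{st}(a)$ for $t \geq 0$ and $s \in [0,1]$. Because each $\psi_\tau$ is a $^*$-homomorphism, each $\Theta_t$ is a $^*$-homomorphism into $C([0,1], B)$ (it is slicewise multiplicative and continuous in $s$), so $\Theta$ is automatically an asymptotic morphism; continuity of $t \mapsto \Theta_t(a)$ follows from joint continuity of $\psi$ together with compactness of $[0,1]$. The endpoints compute as $\ev_0 \circ \Theta = \psi_0 = \beta_{\infty, f(n)} \circ \phi_n$ (constant in $t$) and $\ev_1 \circ \Theta = \psi$, exhibiting an asymptotic homotopy and hence $[[\psi]] = [[\beta_{\infty, f(n)} \circ \phi_n]]$. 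Chaining this with the first step yields $[[\phi \circ \alpha_{\infty, n}]] = [[\beta_{\infty, f(n)} \circ \phi_n]]$.

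The step I expect to require the most care is the claim that the sliding homotopy $\Theta$ is genuinely an asymptotic morphism: the naive worry is that near $s = 0$ the argument $st$ does not tend to infinity as $t \to \infty$, so asymptotic multiplicativity could fail uniformly in $s$. This is exactly what the crucial observation neutralizes — since every slice $\psi_\tau$ is a true $^*$-homomorphism, there is no multiplicativity defect at any time, and the potential obstacle evaporates. The remaining verifications (the matching in $B_{\rm as}$, the endpoint computation for $\psi_0$, and the joint-continuity estimate) are routine.
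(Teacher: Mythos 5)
Your strategy is, at its core, the paper's own proof: the paper defines $\Theta_n(a)(t)(s) = \Phi_n(a)\big(s(t-n)+n\big)$ on $C_b(\mathbb R_{\geq n}, C([0,1],B))$ and takes the induced asymptotic morphism as the homotopy, which is exactly your ``sliding'' homotopy up to the reparametrization $t \mapsto t-n$. Your two key points --- every time-slice of $\Phi_n$ is an honest $^*$-homomorphism (so multiplicativity of the slide is automatic), and $\Phi_n(\cdot)(n) = \beta_{\infty, f(n)}\circ \phi_n$ by the boundary condition $\ev_0 \circ h_n = \beta_{f(n+1),f(n)}\circ \phi_n$ --- are precisely what makes the paper's two-line proof work, and those parts of your argument are correct. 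However, your first step contains a genuine error. You claim the \emph{equivalence} $\psi \cong \phi\circ\alpha_{\infty,n}$, i.e.\ $\psi_{\rm as} = (\phi\circ\alpha_{\infty,n})_{\rm as}$, for the shifted map $\psi_t(a) = \Phi_n(a)(t+n)$, on the grounds that ``the class in $B_{\rm as}$ only records the tail, so the shift by $n$ is immaterial.'' That is not what $B_{\rm as}$ records: two functions define the same class in $C_b(\mathbb R_+,B)/C_0(\mathbb R_+,B)$ exactly when their \emph{difference} vanishes at infinity, and $B_{\rm as}$ is not invariant under time translation. There is no reason that $\|\Phi_n(a)(t+n)-\Phi_n(a)(t)\|\to 0$: take all $A_m = B_m = C(\mathbb T)$, all connecting maps and all $\phi_m$ the identity, and $h_m(g)(s)(z) = g(e^{2\pi i m s}z)$ (a loop of automorphisms based at the identity, so this is a strong homotopy morphism); then for $a(z)=z$ one computes $\sup_{t\in[m,m+1)}\|\Phi_n(a)(t+n)-\Phi_n(a)(t)\| = \sup_{u\in[0,1)}|e^{2\pi i n u}-1| = 2$ for every $m$, so the two classes in $B_{\rm as}$ differ. (The weaker statement $[[\psi]]=[[\phi\circ\alpha_{\infty,n}]]$ is still true, but it requires a further homotopy argument --- e.g.\ a second slide $(t,s)\mapsto \Phi_n(a)(t+sn)$ --- not the constant homotopy your equivalence claim was meant to provide.)

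The repair is immediate and lands you exactly on the paper's proof: do not shift, extend. Set $\psi_t(a) = \Phi_n(a)(t)$ for $t \geq n$ and $\psi_t(a) = \Phi_n(a)(n)$ for $0 \leq t < n$, as in Lemma~\ref{lem:perturb}. Altering a representative on the compact set $[0,n]$ does not change its class in $B_{\rm as}$, so with this $\psi$ the identity $\psi_{\rm as} = (\phi\circ\alpha_{\infty,n})_{\rm as}$, and hence $\psi \cong \phi\circ\alpha_{\infty,n}$, holds genuinely; moreover every slice $\psi_t$ is still a $^*$-homomorphism and $\psi_0 = \beta_{\infty,f(n)}\circ\phi_n$. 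Your remaining two steps then go through verbatim, and the resulting homotopy is the paper's $\Theta_n$ in slightly different coordinates. It is worth noting that the step you flagged as the delicate one (that the slide is an asymptotic morphism) is in fact fine, while the step you dismissed as routine (``the matching in $B_{\rm as}$'') is where the proof breaks.
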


\begin{proof}
	Given $n \in \mathbb N$, define $\Theta_n \colon A_n \rightarrow C_b(\mathbb R_{\geq n}, C([0, 1], B))$ by 
	\begin{equation*}
		\Theta_n(a)(t)(s) = \Phi_n(a)\big(s(t - n) +n\big)
	\end{equation*}
	for $a \in A_n$, $t \in \mathbb R_{\geq n}$, and $s \in [0, 1]$.  The asymptotic morphism $\theta_n \colon A \xrightarrow\approx C([0, 1], B)$ induced by $\Theta_n$ is the desired homotopy.    	
\end{proof}

The following result is due to Dadarlat in \cite[Corollary~3.15]{Dadarlat94}.

\begin{theorem}\label{thm:diagram-rep}
	If $(\underline A, \underline \alpha)$ is a shape system with limit $A$ and $(\underline B, \underline \beta)$ is an inductive system with limit $B$, then every asymptotic morphism $\phi \colon A \xrightarrow\approx B$ has the form $\hlim\, (f, \underline \phi, \underline h)$ for a suitable strong homotopy morphism $(f, \underline \phi, \underline h)$ of the inductive systems.
\end{theorem}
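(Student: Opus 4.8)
The plan is to manufacture the discrete data $(f,\underline\phi,\underline h)$ directly from the $^*$-homomorphism $\phi_{\mathrm{as}}\colon A\to B_{\mathrm{as}}$ attached to $\phi$, and then to verify that feeding this data into Definition~\ref{def:homotopy-limit} recovers $\phi_{\mathrm{as}}$. Since the homotopy limit is well defined precisely through its induced map to $B_{\mathrm{as}}$, it suffices to arrange that the strong homotopy morphism we build has $(\hlim(f,\underline\phi,\underline h))_{\mathrm{as}}=\phi_{\mathrm{as}}$; this forces $\hlim(f,\underline\phi,\underline h)\cong\phi$, which is what the statement asks.

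First I would record a presentation of the asymptotic algebra with surjective connecting maps, so that semiprojectivity applies to its limit. Writing $J_m=\{g\in C_b(\mathbb R_+,B):g|_{[m,\infty)}=0\}$, the ideals $J_m$ increase with $\overline{\bigcup_m J_m}=C_0(\mathbb R_+,B)$, whence $B_{\mathrm{as}}=\varinjlim_m C_b([m,\infty),B)$ under the (surjective) restriction maps. Applying the semiprojectivity of the shape maps $\alpha_n$ to $\phi_{\mathrm{as}}$ and this presentation, for each $n$ I obtain an integer $m(n)$ and a \emph{genuine} $^*$-homomorphism $\psi_n\colon A_n\to C_b([m(n),\infty),B)$ whose class in $B_{\mathrm{as}}$ equals $\phi_{\mathrm{as}}\circ\alpha_{\infty,n}$; concretely, $\psi_n(a)(t)$ agrees asymptotically with $\phi_t(\alpha_{\infty,n}(a))$. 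The path-valued map $\psi_n$ already supplies honest homotopies in $B$. A second application of semiprojectivity—now for the system $(\underline B,\underline\beta)$, at the cost of increasing the index, which is exactly the role of the reindexing function $f$—pushes the restrictions of $\psi_n$ to compact intervals into finite stages, yielding $\phi_n\colon A_n\to B_{f(n)}$ and $h_n\colon A_n\to C([0,1],B_{f(n+1)})$ with $\ev_0\circ h_n=\beta_{f(n+1),f(n)}\circ\phi_n$ and $\ev_1\circ h_n=\phi_{n+1}\circ\alpha_{n+1,n}$.

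I would organize all of these choices as a single diagonal induction over $n$. Fix times $1\le t_1<t_2<\cdots\to\infty$ and finite sets $F_n\subseteq A_n$ with $\bigcup_n\alpha_{\infty,n}(F_n)$ dense in $A$, and at stage $n$ select $f(n)$, $\phi_n$, and $h_n$ so that, up to an error summable in $n$, the map $\beta_{\infty,f(n)}\circ\phi_n$ approximates $\phi_{t_n}\circ\alpha_{\infty,n}$ on $F_n$ while the image of $h_n$ in $B$ tracks the genuine path $t\mapsto\phi_t\circ\alpha_{\infty,n}$ on $[t_n,t_{n+1}]$. With this data the maps $\Phi_n$ of Definition~\ref{def:homotopy-limit}, which concatenate the $h_m$ over successive unit intervals, reconstruct the chosen lift of $\phi_{\mathrm{as}}$ up to an asymptotically vanishing discrepancy, so the induced map $A\to B_{\mathrm{as}}$ is $\phi_{\mathrm{as}}$ itself and the construction closes.

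The hard part is compatibility, not any single lifting step. The lifts $\psi_n$ and $\psi_{n+1}\circ\alpha_n$ agree only modulo $C_0$, yet the commuting-triangle conditions defining a strong homotopy morphism must hold \emph{exactly} inside a fixed finite stage $B_{f(n+1)}$. The way through is to exploit that semiprojectivity delivers genuine $^*$-homomorphisms (rather than approximately multiplicative maps), so that consecutive paths can be concatenated and the triangles closed on the nose, while the asymptotic discrepancies and the two successive index shifts are absorbed into the choices of $t_n$, of $f$, and of telescoping tolerances. Controlling these three families of choices simultaneously, so that the approximate agreements accumulate to the exact identity $(\hlim)_{\mathrm{as}}=\phi_{\mathrm{as}}$, is the technical heart of the argument.
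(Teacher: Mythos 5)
Note first that the paper itself does not prove this theorem---it is quoted directly from \cite[Corollary~3.15]{Dadarlat94}---so what you are attempting is a reconstruction of Dadarlat's argument, and your outline has a genuine gap at its central step. Your first move is fine and is exactly the paper's Lemma~\ref{lem:perturb}: writing $B_{\rm as} = \varinjlim_m C_b([m,\infty),B)$ with \emph{surjective} restriction maps, semiprojectivity of the $\alpha_n$ produces genuine $^*$-homomorphisms $\psi_n \colon A_n \to C_b([m(n),\infty),B)$ lifting $\phi_{\rm as}\circ\alpha_{\infty,n}$. The gap is your ``second application of semiprojectivity---now for the system $(\underline B, \underline\beta)$.'' Semiprojectivity (\cite[Definition~2.10]{Blackadar85}, recalled in Section~\ref{sec:prelmin-shape}) gives exact finite-stage lifts only along inductive systems whose connecting maps are surjective; the system $(\underline B,\underline\beta)$ is arbitrary, and so are the induced systems $\big(C([0,1],B_k)\big)_k$ and $\big(C([s,T],B_k)\big)_k$ through which you want to push the restrictions of $\psi_n$. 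The only tool available for arbitrary systems is Blackadar's homotopy factorization theorem (Lemma~\ref{lem:sp-homotopy-lift} in the paper), and it factors a semiprojective map through a finite stage \emph{only up to homotopy}. Meanwhile the defining identities of a strong homotopy morphism, $\ev_0 \circ h_n = \beta_{f(n+1),f(n)}\circ\phi_n$ and $\ev_1\circ h_n = \phi_{n+1}\circ\alpha_{n+1,n}$, must hold \emph{exactly} inside $B_{f(n+1)}$. Your proposed remedy---absorbing the discrepancies into the choices of $t_n$, of $f$, and of ``telescoping tolerances''---is not an argument: no choice of tolerances converts an approximate identity into an exact one. What is actually needed is a relative lifting/homotopy-stability mechanism of the kind the paper develops for other purposes: if two $^*$-homomorphisms into the \emph{same} finite stage are point-norm close on a suitable finite set, then after precomposing with a further semiprojective map of the shape system (so the shape system must also be reindexed, which your outline never does) they become genuinely homotopic inside $C([0,1],B_{f(n+1)})$ (Corollary~\ref{cor:homotopy-stability}), and the relative form (Theorem~\ref{thm:lifting}, applied to the surjection $(\ev_0,\ev_1)\colon C([0,1],B_k)\to B_k\oplus B_k$) is what lets one prescribe the endpoints while keeping the homotopy close to a given path. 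Producing the $\phi_n$ and $h_n$ by such an argument is the technical heart of the theorem, and it is precisely the part your proposal leaves unproved.

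A second, related point concerns your verification. You are right that one must track actual paths rather than stage-wise homotopy classes: knowing only $[[\hlim(\cdot)\circ\alpha_{\infty,n}]] = [[\phi\circ\alpha_{\infty,n}]]$ for all $n$ would not determine $[[\phi]]$, since the map $[[A,B]]\to\varprojlim_n \Homah{A_n}{B}$ is not injective in general---its failure of injectivity is exactly the $\varprojlim^1$/Hausdorffization phenomenon of Section~\ref{sec:Hd} (Theorem~\ref{thm:hausdorff}). But this requirement makes the missing mechanism above even more demanding (the $h_n$ must both close the triangles exactly and stay uniformly close to the genuine path of $\phi$), and your exactness claim overreaches: Definition~\ref{def:homotopy-limit} traverses $h_m$ over the unit interval $[m,m+1]$, whereas your $h_n$ tracks $t\mapsto\phi_t\circ\alpha_{\infty,n}$ over $[t_n,t_{n+1}]$, so for a general choice of $t_n$ the construction recovers a reparametrization $\phi\circ r$, which is asymptotically homotopic to $\phi$ but need not be equivalent to it. That is enough for the theorem as it is used in the paper, but it is incompatible with your stated goal $(\hlim(f,\underline\phi,\underline h))_{\rm as}=\phi_{\rm as}$ unless you fix $t_n=n$---in which case the freedom in $t_n$ that you invoke to absorb errors is not available.
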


Note that any $C^*$-algebra $B$ is the limit of the inductive system $(\underline B, \underline \beta)$ where $B_n = B$ and $\beta_n = \mathrm{id}_B$ for all $n \in \mathbb N$.  In this case, we will often abuse notation and write $(\underline \phi, \underline h) \colon (\underline A, \underline \alpha) \xrightarrow\approx B$ for a strong homotopy morphism $({\rm id}_\mathbb N, \underline \phi, \underline h)$.

\subsection{Homotopy stability}\label{sec:homotopy-stability}

A separable $C^*$-algebra $A$ is called \emph{semiprojective} if $\mathrm{id}_A$ is semiprojective.  Blackadar proved in \cite[Corollary~4.3]{Blackadar16} that if $\phi, \psi \colon A \rightarrow B$ are $^*$-homomorphisms between $C^*$-algebras and are sufficiently close in the point-norm topology and $A$ is separable and semiprojective, then $\phi$ and $\psi$ are homotopic (with the bound depending only on $A$).  This subsection is devoted to proving a relative version of Blackadar's result for semiprojective $^*$-homomorphisms (Corollary~\ref{cor:homotopy-stability})---the proof is very similar.  We begin with a lifting result similar to \cite[Theorem~4.1]{Blackadar16}.

\begin{theorem}\label{thm:lifting}
	Let $A_0$ and $A$ be separable $C^*$-algebras and let $\alpha \colon A_0 \rightarrow A$ be a semiprojective $^*$-homomorphism.  For every finite set $\mathcal F \subseteq A_0$ and $\epsilon > 0$, there are a finite set $\mathcal G \subseteq A$ and $\delta > 0$ such that for all $C^*$-algebras $B$ and $E$, surjective $^*$-homomorphisms $q \colon E \rightarrow B$, and $^*$-homomorphisms $\phi, \psi \colon A \rightarrow B$ such that \mbox{$\|\phi(a) - \psi(a)\| < \delta$} for all $a \in \mathcal G$, if $\tilde\phi \colon A \rightarrow E$ is a $^*$-homomorphism such that $q\circ  \tilde\phi = \phi$, then there is a $^*$-homomorphism $\tilde\psi \colon A_0\rightarrow E$ such that $q \circ \tilde\psi = \psi \circ \alpha$ and $\|\tilde{\phi}(\alpha(a)) - \tilde{\psi}(a)\| < \epsilon$ for all $a \in \mathcal F$.
\end{theorem}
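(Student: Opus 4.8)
The plan is to argue by contradiction, following the scheme of Blackadar's proof of \cite[Theorem~4.1]{Blackadar16} but packaging the data into a pullback so that the lifts produced are \emph{exact}. First I would negate the statement: fix $\mathcal F\subseteq A_0$ and $\epsilon>0$ for which no admissible $\mathcal G,\delta$ exist. Using separability of $A$, choose an increasing sequence of finite sets $\mathcal G_1\subseteq\mathcal G_2\subseteq\cdots$ with dense union and apply the negation with $(\mathcal G_n,1/n)$ to obtain, for each $n$, a surjection $q_n\colon E_n\to B_n$, $^*$-homomorphisms $\phi_n,\psi_n\colon A\to B_n$ with $\|\phi_n(a)-\psi_n(a)\|<1/n$ for $a\in\mathcal G_n$, and a lift $\tilde\phi_n\colon A\to E_n$ of $\phi_n$, such that no $^*$-homomorphism $A_0\to E_n$ simultaneously lifts $\psi_n\circ\alpha$ exactly and lies within $\epsilon$ of $\tilde\phi_n\circ\alpha$ on $\mathcal F$.

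Next I would assemble these into products. Since $^*$-homomorphisms are contractive, the families $(\phi_n),(\psi_n),(\tilde\phi_n)$ define $^*$-homomorphisms into $\prod_n B_n$ and $\prod_n E_n$, and $\mathbf q=\prod_n q_n\colon\prod_n E_n\to\prod_n B_n$ is surjective and carries $\bigoplus_n E_n$ into $\bigoplus_n B_n$. A short density-plus-contractivity argument shows $\|\phi_n(a)-\psi_n(a)\|\to0$ for every $a\in A$, so $\phi$ and $\psi$ induce the \emph{same} $^*$-homomorphism $\bar\psi\colon A\to\prod_n B_n/\bigoplus_n B_n$.

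The heart of the argument is the following pullback. Let $\bar{\mathbf q}\colon\prod_n E_n/\bigoplus_n E_n\to\prod_n B_n/\bigoplus_n B_n$ be induced by $\mathbf q$, and form $Q=(\prod_n E_n/\bigoplus_n E_n)\times_{\prod_n B_n/\bigoplus_n B_n}A$ along $\bar{\mathbf q}$ and $\bar\psi$. The lift $\tilde\phi$ gives a canonical $^*$-homomorphism $s\colon A\to Q$, $s(a)=([(\tilde\phi_n(a))_n],a)$, which is a section of the projection $Q\to A$ — this is exactly where the hypotheses $q_n\circ\tilde\phi_n=\phi_n$ and $\bar\phi=\bar\psi$ are used. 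I would then identify $Q$ with the inductive limit $\varinjlim_k F_k$, where $F_k=A\times_{\prod_{n>k}B_n}\prod_{n>k}E_n$ and the connecting maps drop one coordinate; these maps are surjective because each $q_n$ is, so this presents $Q$ as an inductive limit with surjective connecting maps. Applying semiprojectivity of $\alpha$ to $s\colon A\to Q=\varinjlim_k F_k$ produces an integer $k$ and a $^*$-homomorphism $\tau\colon A_0\to F_k$ lifting $s\circ\alpha$. Writing $\tau$ in coordinates as $(\tau^A,(\tau_n)_{n>k})$ with $\tau^A\colon A_0\to A$ and $\tau_n\colon A_0\to E_n$, the definition of $F_k$ forces $q_n\circ\tau_n=\psi_n\circ\tau^A$, while compatibility with the limit forces $\tau^A=\alpha$ (project to $A$) and $\|\tau_n(a)-\tilde\phi_n(\alpha(a))\|\to0$ for each $a\in A_0$ (compare the $\prod_n E_n/\bigoplus_n E_n$-coordinates). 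Thus each $\tau_n$ is an \emph{exact} lift of $\psi_n\circ\alpha$, and for $n$ large enough $\tau_n$ is within $\epsilon$ of $\tilde\phi_n\circ\alpha$ on the finite set $\mathcal F$ — contradicting the choice of the $n$-th counterexample.

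The hard part will be the pullback bookkeeping in the third paragraph: arranging that the target against which semiprojectivity is invoked is genuinely an inductive limit with surjective connecting maps (the identification $Q\cong\varinjlim_k F_k$), and that a lift landing at a \emph{finite} stage $F_k$ delivers honest $^*$-homomorphisms $\tau_n\colon A_0\to E_n$ that lift $\psi_n\circ\alpha$ on the nose. Exactness of the lift is what rules out a purely approximate argument and dictates the pullback construction; once it is in place, the closeness on $\mathcal F$ and the contradiction fall out immediately by reading off coordinates. The remaining steps — norm-controlled lifting to see surjectivity of $\mathbf q$ and of the connecting maps, and the density argument for $\bar\phi=\bar\psi$ — are routine.
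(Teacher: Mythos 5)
Your proposal is correct and follows essentially the same route as the paper's proof: negate the statement, assemble the per-$n$ counterexamples into product algebras, realize a pullback as an inductive limit with surjective connecting maps, and apply semiprojectivity of $\alpha$ to get finite-stage \emph{exact} lifts whose coordinates $\tau_n$ give the contradiction. The only (cosmetic) difference is that the paper pulls back over $\widehat B_1 = \prod_n B_n$, forming $P_n = \widehat B_1 \oplus_{\widehat B_n} \widehat E_n$ and mapping $A$ in via the pair $(\Psi_1, \tilde\Phi)$, whereas you pull back over $A$ itself along $\bar\psi$, with the same routine norm-controlled-lifting argument underlying the identification of the limit with the pullback in both cases.
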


\begin{proof}
	Suppose the result is false and choose a finite set  $\mathcal F \subseteq A_0$ and $\epsilon > 0$ for which the result fails.  Let $\mathcal G_n \subseteq A$ be an increasing sequence of finite sets with dense union and let $\delta_n > 0$ be a decreasing sequence converging to 0.  For each $n \geq 1$, choose $C^*$-algebras $B_n$ and $E_n$, a surjective $^*$-homomorphism $q_n \colon E_n \rightarrow B_n$, $^*$-homomorphisms $\phi_n, \psi_n \colon A \rightarrow B_n$ with $\|\phi(a) - \psi(a)\| < \delta_n$ for all $a \in \mathcal G_n$ and a $^*$-homomorphism $\tilde\phi_n \colon A \rightarrow E_n$ with $q \circ \tilde \phi_n = \phi_n$ such that for all $^*$-homomorphisms $\tilde \psi_n \colon A_0 \rightarrow E_n$ with $q \circ \tilde \psi_n = \psi_n \circ \alpha$, there is an $a \in \mathcal F$ such that $\|\tilde{\phi}(\alpha(a)) - \tilde{\psi}(a)\| \geq \epsilon$.
	
	Let $\widehat B_n = \prod_{m \geq n} B_m$, $\widehat E_n = \prod_{m \geq n} E_m$, and $\widehat q_n = \prod_{m \geq n} q_m \colon \widehat E_n \rightarrow \widehat B_n$.  Let $\beta_n \colon \widehat B_n \rightarrow \widehat B_{n+1}$ and $\gamma_n \colon \widehat E_n \rightarrow \widehat E_{n+1}$ denote the projection maps.  Let $\widehat B$ and $\widehat E$ denote the inductive limits of $(\underline{\widehat B}, \underline \beta)$ and $(\underline {\widehat E}, \underline \gamma)$, respectively, and let $\widehat q \colon \widehat E \rightarrow \widehat B$ denote the $^*$-homomorphism induced by the $\widehat q_n$. Finally, define $\Phi_n = \prod_{m \geq n} \phi_m$, $\tilde \Phi_n = \prod_{m \geq n} \tilde \phi_m$, and $\Psi_n = \prod_{m \geq n} \psi_m$, and let $\Phi, \Psi \colon A \rightarrow \widehat B$ and $\tilde \Phi \colon A \rightarrow \widehat E$ denote the induced maps.  Note that $\|\phi_n(a) - \psi_n(a) \| \rightarrow 0$ for all $a \in A$, and hence $\Phi = \Psi$.
	
	For $n \geq 1$, consider the pullback 
	\[ P_n = \widehat B_1 \oplus_{\widehat B_n} \widehat E_n = \{(b, e) \in \widehat B_1 \oplus \widehat E_n : \beta_{n,1}(b) = \widehat q_n(e) \}, \]
	and let $\mathrm{pr}_1^{(n)} \colon P_n \rightarrow \widehat B_1$ and $\mathrm{pr}_2^{(n)} \colon P_n \rightarrow \widehat E_n$ denote the projection maps.   Let $J_n$ be the kernel of $\hat{q}_n$ and let $I_n$ be the kernel of $\mathrm{pr}_1^{(n)}$.  There are canonical maps $\theta_n\colon I_n\to J_n$ and $\pi_n\colon P_n\to P_{n+1}$ such that the diagram 
	\newlength{\shift}\setlength{\shift}{10pt}
	\[ \begin{tikzcd}[row sep = normal, column sep = small]
	 &[\shift]0 \arrow{rr} &[-\shift] &[\shift] J_n \arrow{dd} \arrow{rr} &[-\shift] &[\shift] \widehat E_n \arrow{dd}[near end]{\gamma_n}\arrow{rr}{\widehat q_n} &[-\shift] &[\shift] \widehat B_n \arrow{rr} \arrow{dd}[near start]{\beta_n} &[-\shift] &[\shift] 0 \\
	 0 \arrow{rr} & & I_n \arrow[crossing over]{rr} \arrow{ur}[yshift =-3pt, xshift =-1pt]{\theta_n}[swap,xshift = 2pt, yshift = 3pt]{\cong} & & P_n \arrow[crossing over]{rr}{\mathrm{pr}_1^{(n)}} \arrow{ur}[xshift = 2pt, yshift = -4pt]{\mathrm{pr}_2^{(n)}} & & \widehat B_1 \arrow[crossing over]{rr} \arrow{ur}[xshift = -2pt, yshift = -5pt]{\beta_{n, 1}} & & 0 \\
	 & 0 \arrow{rr} & & J_{n+1} \arrow{rr} & & \widehat E_{n+1} \arrow[near end]{rr}[xshift = -3pt]{\widehat q_{n+1}} & & \widehat B_{n+1} \arrow{rr} & & 0 \\
	 0 \arrow{rr}& & I_{n+1} \arrow[crossing over, leftarrow]{uu} \arrow{ur}[xshift = 3pt, yshift = -1pt]{\theta_{n+1}}[swap,xshift =5pt, yshift = 5pt]{\cong} \arrow{rr} & & P_{n+1} \arrow[crossing over, leftarrow]{uu}[swap, near end]{\pi_n} \arrow{ur}[near end, xshift = 3pt, yshift =-6pt]{\mathrm{pr}_2^{(n+1)}} \arrow{rr}[xshift = 8pt]{\mathrm{pr}_1^{(n+1)}} & & \widehat B_1 \arrow[equals, crossing over]{uu} \arrow{ur}[swap,xshift =3pt, yshift = 5pt]{\beta_{n+1, 1}}\arrow{rr} & & 0 & 
	\end{tikzcd} \]
	commutes has exact rows.  A diagram chase shows that each $\theta_n$ is an isomorphism and each $\pi_n$ is surjective.  Taking the inductive limit over $n$ produces a diagram 
	\[ \begin{tikzcd}
		0 \arrow{r} & I \arrow{r} \arrow{d}{\theta}[swap]{\cong} & P \arrow{r}{\mathrm{pr}_1^{(\infty)}} \arrow{d}{\mathrm{pr}_2^{(\infty)}} & \widehat B_1 \arrow{r} \arrow{d}{\beta_{\infty, 1}} & 0 \\
			0 \arrow{r} & J \arrow{r} & \widehat E \arrow{r}{\widehat q} & \widehat B \arrow{r} & 0
	\end{tikzcd} \] 
	that commutes; note that the rows are exact.  A diagram chase shows the right-hand square is a pullback.
	
	The maps $\Psi_1 \colon A \rightarrow \widehat B_1$ and $\tilde\Phi \colon A \rightarrow \widehat E$ induce a $^*$-homomorphism $\rho \colon A \rightarrow P$.  Because $\alpha$ is semiprojective, there are an integer $n \geq 1$ and a $^*$-homomorphism $\tilde \rho \colon A_0 \rightarrow P_n$ such that $\pi_{\infty, n} \circ \tilde \rho = \rho \circ \alpha$.  Write $\mathrm{pr}_2^{(n)} \circ \tilde \rho = \prod_{m \geq n} \tilde \psi_m$ for $^*$-ho\-mo\-mor\-phisms $\tilde\psi_m \colon A_0 \rightarrow E_m$, $m \geq n$.  By construction, $q_m \circ \tilde \psi_m = \psi_m$ for all $m \geq n$, and $\lim_{m \rightarrow \infty} \|\tilde\phi_m(\alpha(a)) - \tilde\psi_m(a)\| = 0$ for all $a \in A_0$, giving a contradiction.
\end{proof}

The following homotopy stability property will be used frequently.  The special case when  $\alpha_0 = \mathrm{id}_A$ is \cite[Corollary~4.3]{Blackadar16}.

\begin{corollary}\label{cor:homotopy-stability}
	Let $A_0$ and $A$ be separable $C^*$-algebras and let $\alpha \colon A_0 \rightarrow A$ be a semiprojective $^*$-homomorphism.  There are a finite set $\mathcal G \subseteq A$ and $\delta > 0$ such that for all $C^*$-algebras $B$ and all $^*$-homomorphisms $\phi, \psi \colon A \rightarrow B$, if $\|\phi(a) - \psi(a)\| < \delta$ for all $a \in \mathcal G$, then $\phi \circ \alpha$ is homotopic to $\psi\circ \alpha$.
\end{corollary}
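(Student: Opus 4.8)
The plan is to deduce the corollary directly from the lifting result Theorem~\ref{thm:lifting} by feeding it a cleverly chosen surjection whose output is automatically the desired homotopy. Recall that a homotopy between $\phi\circ\alpha$ and $\psi\circ\alpha$ is precisely a $^*$-homomorphism $H\colon A_0\to C([0,1],B)$ with $\ev_0\circ H=\phi\circ\alpha$ and $\ev_1\circ H=\psi\circ\alpha$; so the goal is to arrange matters so that the lift $\tilde\psi$ produced by Theorem~\ref{thm:lifting} \emph{is} such an $H$.

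To produce the constants $\mathcal G$ and $\delta$ claimed in the corollary, I would first apply Theorem~\ref{thm:lifting} to $\alpha$ with, say, $\mathcal F=\emptyset$ and $\epsilon=1$, obtaining a finite set $\mathcal G\subseteq A$ and $\delta>0$. Note that the quantitative closeness conclusion of Theorem~\ref{thm:lifting} will play no role here; only the bare existence of the lift is needed, which is why the choice of $\mathcal F$ and $\epsilon$ is immaterial.

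The key trick is then to double the target. Given $B$ and $^*$-homomorphisms $\phi,\psi\colon A\to B$ that agree to within $\delta$ on $\mathcal G$, set $E=C([0,1],B)$ and let $q=\ev_0\oplus\ev_1\colon E\to B\oplus B$; this is surjective since any pair $(b_0,b_1)\in B\oplus B$ is the endpoint pair of the path $t\mapsto(1-t)b_0+tb_1$. Consider $\Phi=\phi\oplus\phi$ and $\Psi=\phi\oplus\psi$ from $A$ to $B\oplus B$. Because $\|\Phi(a)-\Psi(a)\|=\|\phi(a)-\psi(a)\|$, these are $\delta$-close on $\mathcal G$, and the constant-path map $\tilde\Phi\colon A\to E$, $\tilde\Phi(a)(t)=\phi(a)$, is a lift of $\Phi$ along $q$. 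Applying Theorem~\ref{thm:lifting} to $q$, $\Phi$, $\Psi$, and $\tilde\Phi$ yields a $^*$-homomorphism $\tilde\Psi\colon A_0\to E=C([0,1],B)$ with $q\circ\tilde\Psi=\Psi\circ\alpha$. Unwinding $q=\ev_0\oplus\ev_1$ and $\Psi\circ\alpha=(\phi\circ\alpha)\oplus(\psi\circ\alpha)$ gives exactly $\ev_0\circ\tilde\Psi=\phi\circ\alpha$ and $\ev_1\circ\tilde\Psi=\psi\circ\alpha$, so $\tilde\Psi$ is the required homotopy.

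The one thing that must be gotten right—and the only genuine conceptual obstacle—is setting up the construction so that a \emph{single} invocation of Theorem~\ref{thm:lifting} pins both endpoints simultaneously. A naïve attempt, such as taking $q=\ev_1\colon C([0,1],B)\to B$ with $\tilde\phi$ the constant path at $\phi$, only produces a homotopy from $\psi\circ\alpha$ to a map that is merely \emph{close} to $\phi\circ\alpha$, leaving an endpoint-correction problem whose resolution would require semiprojectivity of $A_0$ itself (which is not assumed). Doubling the codomain to $B\oplus B$ and lifting $\phi\oplus\phi$ by the constant path circumvents this, since $q\circ\tilde\Psi=(\phi\circ\alpha)\oplus(\psi\circ\alpha)$ fixes both endpoints at once. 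Beyond this, there is essentially no further analytic content: all of it is packaged into Theorem~\ref{thm:lifting}.
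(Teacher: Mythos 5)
Your proposal is correct and is essentially the paper's own proof: the same choice $\mathcal F=\emptyset$, $\epsilon=1$ in Theorem~\ref{thm:lifting}, the same doubled target $q=(\ev_0,\ev_1)\colon C([0,1],B)\to B\oplus B$, the constant-path lift of $\phi\oplus\phi$, and the observation that the resulting lift of $(\phi\oplus\psi)\circ\alpha$ is itself the desired homotopy. Nothing to add.
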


\begin{proof}
	In the notation of Theorem~\ref{thm:lifting}, let $\mathcal F = \emptyset$ and $\epsilon = 1$, and choose $\mathcal G$ and $\delta$ accordingly.  Consider the surjective $^*$-homomorphism $q \colon C([0, 1], B) \rightarrow B \oplus B$ given by $q(f) = (f(0), f(1))$.  Assume $\phi, \psi\colon A\to B$ are $(\mathcal{G}, \delta)$-close.  The $^*$-homomorphism $\phi \oplus \phi \colon A \rightarrow B \oplus B$ lifts to $E =C([0,1],B)$ and is $(\mathcal{G}, \delta)$-close to $\phi \oplus \psi \colon A \rightarrow B$, so by Theorem~\ref{thm:lifting}, $(\phi \oplus \psi) \circ \alpha$ lifts to a $^*$-homomorphism $A_0 \rightarrow E$.  This lift is the desired homotopy.
\end{proof}

\section{The topology on $[[A, B]]$}
\label{sec:top}

Recall that for $C^*$-algebras $A$ and $B$ with $A$ separable, $[[A,
B]]$ is the set of homotopy classes of asymptotic morphisms from $A$
to $B$.  We will show that there is a topology on $[[A, B]]$ with
properties analogous to those described in Theorem~\ref{thm:main} and
for which the composition product is jointly continuous.  The topology will be defined in Section~\ref{sec:top-def}. Sections~\ref{sec:top-conv} and \ref{sec:top-comp} will address the characterization of convergence and the continuity of composition, respectively.

\subsection{Definition of the topology}\label{sec:top-def}

Before introducing the topology on $[[A, B]]$, we will need a few
relatively standard results that provide homotopy factorization of
asymptotic morphisms through genuine $^*$-homomorphisms in the presence of
semiprojectivity.

\begin{lemma}\label{lem:perturb}
	Let $A_0$, $A$, and $B$ be $C^*$-algebras with $A_0$ and $A$ separable.  If $\alpha \colon A_0 \rightarrow A$ is a semiprojective $^*$-homomorphism and $\phi \colon A \xrightarrow\approx B$ is an asymptotic morphism, then there is an asymptotic morphism $\tilde\phi \colon A_0 \xrightarrow\approx B$ such that $\tilde\phi \cong \phi \circ \alpha$ and $\tilde\phi_t$ is a $^*$-homomorphism for all $t \geq 0$.
\end{lemma}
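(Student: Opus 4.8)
The plan is to reformulate the conclusion as a lifting problem and then solve it using the semiprojectivity of $\alpha$. First I would note that specifying an asymptotic morphism $\tilde\phi \colon A_0 \xrightarrow\approx B$ all of whose legs $\tilde\phi_t$ are $^*$-homomorphisms is the same as specifying a single $^*$-homomorphism $\Psi \colon A_0 \rightarrow C_b(\mathbb R_+, B)$, via $\tilde\phi_t(a) = \Psi(a)(t)$ (boundedness is automatic, since $^*$-homomorphisms are contractive). Writing $q \colon C_b(\mathbb R_+, B) \rightarrow B_{\rm as}$ for the quotient map, the induced $^*$-homomorphism is then $\tilde\phi_{\rm as} = q \circ \Psi$, and, since $\phi \cong \psi$ is equivalent to $\phi_{\rm as} = \psi_{\rm as}$, we have $\tilde\phi \cong \phi\circ\alpha$ if and only if $\tilde\phi_{\rm as} = (\phi\circ\alpha)_{\rm as} = \phi_{\rm as}\circ\alpha$. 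Thus it suffices to produce a $^*$-homomorphism (defined on all of $\mathbb R_+$, or just on a tail) lifting $\phi_{\rm as}\circ\alpha$ through $q$.

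The second and main step is to realize $B_{\rm as}$ as the inductive limit of a system with \emph{surjective} connecting maps, so that semiprojectivity applies. For $n \in \mathbb N$ I would set $\widehat B_n = C_b(\mathbb R_{\geq n}, B)$ and let $\beta_n \colon \widehat B_n \rightarrow \widehat B_{n+1}$ be the restriction maps, which are surjective because any bounded continuous function on $\mathbb R_{\geq n+1}$ extends to one on $\mathbb R_{\geq n}$ (e.g.\ extend it constantly on $[n, n+1]$). Equivalently, $\widehat B_n \cong C_b(\mathbb R_+, B)/J_n$, where $J_n = \{f : f|_{\mathbb R_{\geq n}} = 0\}$ is an \emph{increasing} sequence of ideals with $\overline{\bigcup_n J_n} = C_0(\mathbb R_+, B)$; consequently the inductive limit of $(\widehat B_n, \beta_n)$ is $C_b(\mathbb R_+, B)/C_0(\mathbb R_+, B) = B_{\rm as}$, and the limit map $\beta_{\infty, n} \colon \widehat B_n \rightarrow B_{\rm as}$ sends $g$ to the asymptotic class of any bounded continuous extension of $g$ to $\mathbb R_+$. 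Applying the defining property of semiprojectivity of $\alpha$ to the $^*$-homomorphism $\phi_{\rm as} \colon A \rightarrow B_{\rm as} = \varinjlim (\widehat B, \underline\beta)$, I obtain an integer $n$ and a $^*$-homomorphism $\tilde\rho \colon A_0 \rightarrow \widehat B_n = C_b(\mathbb R_{\geq n}, B)$ with $\beta_{\infty, n} \circ \tilde\rho = \phi_{\rm as}\circ\alpha$.

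Finally, I would read off $\tilde\phi$ from $\tilde\rho$: define $\tilde\phi_t = \ev_t\circ\tilde\rho$ for $t \geq n$ and $\tilde\phi_t = \ev_n\circ\tilde\rho$ for $0 \leq t < n$. Each $\tilde\phi_t \colon A_0 \rightarrow B$ is then a $^*$-homomorphism, $t\mapsto\tilde\phi_t(a)$ is continuous, and the asymptotic (near-multiplicativity) condition holds trivially, so $\tilde\phi$ is an asymptotic morphism all of whose legs are $^*$-homomorphisms. By construction its asymptotic class is $\tilde\phi_{\rm as} = \beta_{\infty,n}\circ\tilde\rho = \phi_{\rm as}\circ\alpha$, so by the first paragraph $\tilde\phi \cong \phi\circ\alpha$, as required. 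The one point I would be most careful about is the identification $\varinjlim(\widehat B, \underline\beta) \cong B_{\rm as}$: the algebraic inductive limit is the quotient of $C_b(\mathbb R_+, B)$ by the non-closed union $\bigcup_n J_n$ (a dense ``germ'' subalgebra), and it is only after passing to the $C^*$-completion that one lands in $B_{\rm as} = C_b(\mathbb R_+, B)/\overline{\bigcup_n J_n}$; matching this completion with the asymptotic-class description of $\beta_{\infty, n}$ is the crux, while the rest is formal.
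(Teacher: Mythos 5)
Your proof is correct and follows essentially the same route as the paper: realize $B_{\rm as}$ as the inductive limit of the restriction system $\big(C_b(\mathbb R_{\geq n}, B)\big)_n$ with surjective connecting maps, apply the semiprojectivity of $\alpha$ to lift $\phi_{\rm as}\circ\alpha$ to some $C_b(\mathbb R_{\geq n}, B)$, and read off $\tilde\phi$ by evaluation (extending constantly on $[0,n]$). The only difference is that you spell out the identification $\varinjlim C_b(\mathbb R_{\geq n}, B) \cong B_{\rm as}$, which the paper asserts without proof.
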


\begin{proof}
	For $n \geq 1$, let $\rho_n \colon C_b(\mathbb R_{\geq n}, B) \rightarrow C_b(\mathbb R_{\geq (n+1)}, B)$ be the restriction map and identify the limit of this inductive system with $B_{\rm as} = C_b(\mathbb R^+, B) / C_0(\mathbb R_+, B)$.  Since $\alpha_0$ is semiprojective and each $\rho_n$ is surjective, there are an integer $n \geq 1$ and a $^*$-homomorphism $\Phi \colon A_0 \rightarrow C_b(\mathbb R_{\geq n}, B)$ such that $\rho_{\infty, n} \circ \Phi = \phi_{\rm as} \circ \alpha$.  Define $\tilde\phi \colon A \xrightarrow\approx B$ by $\tilde\phi_t(a) = \Phi(a)(t)$ for $t \geq n$ and $\tilde\phi_t(a) = \Phi(a)(n)$ for $0 \leq t < n$.  By construction, $\tilde\phi \cong \phi \circ \alpha$.
\end{proof}

For $C^*$-algebras $A$ and $B$ with $A$ separable, we let $\Homah{A}{B} \subseteq [[A, B]]$ denote the asymptotic homotopy equivalence classes of $^*$-homomorphisms $A \rightarrow B$.

\begin{lemma}\label{lem:factor}
	Let $A_0$, $A$, and $B$ be $C^*$-algebras with $A_0$ and $A$ separable. If $\alpha \colon A_0 \rightarrow A$ is a semiprojective morphism and $\phi \colon A \xrightarrow\approx B$ is an asymptotic morphism, then there is a $^*$-homomorphism $\psi \colon A_0 \rightarrow B$ such that $[[\psi]] = [[\phi \circ \alpha]]$.  In particular, $\alpha$ induces a map
	\[ \alpha^*\colon [[A, B]] \to  \Homah{A_0}{B} \subseteq [[A_0, B]]. \]
\end{lemma}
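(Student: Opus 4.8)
The plan is to reduce everything to the perturbation result already in hand. First I would apply Lemma~\ref{lem:perturb} to $\alpha$ and $\phi$, producing an asymptotic morphism $\tilde\phi \colon A_0 \xrightarrow\approx B$ with $\tilde\phi \cong \phi \circ \alpha$ and with each $\tilde\phi_t$ a genuine $^*$-homomorphism. Since $\tilde\phi \cong \phi \circ \alpha$ gives $[[\tilde\phi]] = [[\phi \circ \alpha]]$, it then suffices to show that $\tilde\phi$ is asymptotically homotopic to the constant asymptotic morphism determined by the single $^*$-homomorphism $\psi := \tilde\phi_0 \colon A_0 \rightarrow B$.

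To produce this homotopy I would simply reparametrize the point-norm continuous path $t \mapsto \tilde\phi_t$ of $^*$-homomorphisms, defining $\theta \colon A_0 \xrightarrow\approx C([0,1], B)$ by $\theta_t(a)(s) = \tilde\phi_{(1-s)t}(a)$ for $a \in A_0$, $t \geq 0$, and $s \in [0,1]$. Then $\ev_0 \circ \theta = \tilde\phi$ and $\ev_1 \circ \theta$ is the constant morphism $\psi$, which are exactly the two endpoint conditions required. The crucial structural point is that, because each $\tilde\phi_\tau$ is multiplicative, so is each $\theta_t$: one has $\theta_t(ab)(s) = \tilde\phi_{(1-s)t}(ab) = \tilde\phi_{(1-s)t}(a)\,\tilde\phi_{(1-s)t}(b) = (\theta_t(a)\theta_t(b))(s)$, so in fact each $\theta_t$ is literally a $^*$-homomorphism into $C([0,1], B)$ and the asymptotic multiplicativity condition holds exactly rather than merely in the limit.

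The only genuine verification — and the step I expect to require the most care — is the continuity condition that $t \mapsto \theta_t(a)$ be continuous for the sup-norm on $C([0,1], B)$. This I would obtain from the uniform continuity of $\tau \mapsto \tilde\phi_\tau(a)$ on compact intervals: for $|t - t_0|$ small, one has $|(1-s)t - (1-s)t_0| \leq |t - t_0|$ uniformly in $s \in [0,1]$, with both arguments lying in a fixed compact interval, so $\sup_{s}\|\tilde\phi_{(1-s)t}(a) - \tilde\phi_{(1-s)t_0}(a)\|$ is controlled. Granting this, $\theta$ is a bona fide asymptotic morphism witnessing $[[\tilde\phi]] = [[\psi]]$, and hence $[[\psi]] = [[\phi \circ \alpha]]$, proving the main assertion.

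Finally, for the induced map I would set $\alpha^*\big([[\phi]]\big) = [[\phi \circ \alpha]]$. This is well defined because precomposing a defining homotopy by the fixed $^*$-homomorphism $\alpha$ converts a homotopy between $\phi$ and $\phi'$ into one between $\phi \circ \alpha$ and $\phi' \circ \alpha$, so the class $[[\phi \circ \alpha]]$ depends only on $[[\phi]]$; and the first part of the lemma shows the image of $\alpha^*$ lands inside $\Homah{A_0}{B} \subseteq [[A_0, B]]$.
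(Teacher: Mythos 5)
Your proposal is correct and follows essentially the same route as the paper: apply Lemma~\ref{lem:perturb} to replace $\phi \circ \alpha$ by an equivalent asymptotic morphism $\tilde\phi$ consisting of genuine $^*$-homomorphisms, set $\psi = \tilde\phi_0$, and connect $\psi$ to $\tilde\phi$ by the reparametrization homotopy $\theta_t(a)(s) = \tilde\phi_{(1-s)t}(a)$ (the paper uses $\tilde\phi_{st}(a)$, which merely swaps the roles of the endpoints). The only difference is that you verify the continuity and multiplicativity of $\theta$ and the well-definedness of $\alpha^*$ explicitly, details the paper leaves implicit.
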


\begin{proof} 
	Let $\tilde\phi \colon A_0 \xrightarrow\approx B$ be given as in Lemma~\ref{lem:perturb} and set $\psi = \tilde\phi_0$.  For $t \geq 0$, define $\theta_t \colon A \rightarrow C([0, 1], B)$ by $\theta_t(a)(s) = \tilde\phi_{st}(a)$
	for all $a \in A_0$ and $s \in [0, 1]$.  Then $\theta_t$ is an asymptotic homotopy between $\psi$ and $\tilde\phi$.  Hence $[[\psi]] = [[\tilde\phi]] = [[\phi \circ \alpha]]$.
\end{proof}

We will use the maps in Lemma~\ref{lem:factor} to define the topology on $[[A, B]]$.

\begin{definition}\label{def:top-asymp}
	Let $A$ and $B$ be $C^*$-algebras with $A$ separable.
  	\begin{enumerate}
  		\item We equip the set $\Hom(A, B)$ of $^*$-homomorphisms $A \rightarrow B$ with the point-norm topology, so $\phi_n \rightarrow \phi$ if and only if $\|\phi_n(a) - \phi(a)\| \rightarrow 0$ for all $a \in A$.
  		\item We equip the image $\mathrm{H}(A, B)$ of the natural map $\Hom(A, B) \rightarrow [[A, B]]$ with the quotient topology inherited from $\Hom(A, B)$.
		\item  We equip $[[A, B]]$ with the weakest topology such that for every separable $C^*$-algebra $A_0$ and every semiprojective $^*$-homomorphism $\alpha \colon A_0 \rightarrow A$, the map $\alpha^*\colon [[A,B]]\to \Homah{A_0}{B }$ given in  Lemma~\ref{lem:factor} is continuous.
	\end{enumerate}
\end{definition} 

Note that the set $\Homah{A}{B}$ carries two natural topologies:
the quotient topology from $\Hom(A, B)$ and the subspace topology from
$[[A, B]]$.  We will always use the former topology on $\Homah{A}{B}$.
We do not know if these topologies coincide, but they are related via
the following straightforward result.

\begin{proposition}\label{prop:inclusion-continuous}
	If $A$ and $B$ are $C^*$-algebras with $A$ separable, then the inclusion map $\Homah{A}{B} \rightarrow [[A, B]]$ is continuous.
\end{proposition}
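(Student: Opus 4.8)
The plan is to unwind the three nested topologies packaged into Definition~\ref{def:top-asymp} and reduce the statement to the continuity of a precomposition map between $\Hom$-spaces, where it becomes a formality. Write $\iota \colon \Homah{A}{B} \to [[A, B]]$ for the inclusion. Since $[[A, B]]$ carries, by definition, the initial (weakest) topology making every map $\alpha^* \colon [[A,B]] \to \Homah{A_0}{B}$ continuous as $\alpha \colon A_0 \to A$ ranges over semiprojective $^*$-homomorphisms out of separable $C^*$-algebras, the universal property of the initial topology says that $\iota$ is continuous if and only if $\alpha^* \circ \iota \colon \Homah{A}{B} \to \Homah{A_0}{B}$ is continuous for each such $\alpha$. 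So it suffices to treat a single fixed semiprojective $\alpha$.

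Next I would invoke the universal property of the quotient topology on $\Homah{A}{B}$. Letting $q_A \colon \Hom(A, B) \to \Homah{A}{B}$ denote the canonical surjection (point-norm topology on the source, quotient topology on the target), a map out of $\Homah{A}{B}$ is continuous precisely when its precomposition with $q_A$ is. Thus the entire problem collapses to verifying that
\[ \alpha^* \circ \iota \circ q_A \colon \Hom(A, B) \longrightarrow \Homah{A_0}{B} \]
is continuous, with $\Hom(A,B)$ given the point-norm topology.

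The crux is to compute this composite on the nose. For a genuine $^*$-homomorphism $\phi \colon A \to B$, Lemma~\ref{lem:factor} gives $\alpha^*([[\phi]]) = [[\phi \circ \alpha]]$; but because \emph{both} $\phi$ and $\alpha$ are $^*$-homomorphisms, the composition $\phi \circ \alpha$ is again a genuine $^*$-homomorphism $A_0 \to B$ (the asymptotic composition degenerates to the literal one, being constant in $t$), so its class is simply $q_{A_0}(\phi \circ \alpha)$. Hence $\alpha^* \circ \iota \circ q_A = q_{A_0} \circ \alpha^\sharp$, where $\alpha^\sharp \colon \Hom(A, B) \to \Hom(A_0, B)$ is precomposition, $\phi \mapsto \phi \circ \alpha$, and $q_{A_0}$ is the quotient map. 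The map $q_{A_0}$ is continuous by definition, so only the point-norm continuity of $\alpha^\sharp$ remains. This is immediate from the fact that the point-norm topology is the initial topology for the evaluation maps $\ev_a \colon \Hom(\,\cdot\,, B) \to B$: for each $a_0 \in A_0$ one has $\ev_{a_0} \circ \alpha^\sharp = \ev_{\alpha(a_0)}$, which is one of the generating continuous maps on $\Hom(A, B)$.

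There is no analytic obstacle in any of this; the work is entirely bookkeeping across the initial, quotient, and subspace topologies. The single point demanding care is the identification in the previous paragraph, namely that $\alpha^*$, when evaluated on classes of honest $^*$-homomorphisms, agrees exactly with the literal composition $\phi \mapsto \phi \circ \alpha$ rather than with some homotopy-perturbed representative produced by Lemma~\ref{lem:factor}. Once that is pinned down, continuity of $\iota$ follows formally from the two universal properties together with the triviality of precomposition.
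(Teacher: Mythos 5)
Your proof is correct and follows essentially the same route as the paper's: both reduce, via the universal property of the initial topology on $[[A,B]]$, to the continuity of $\alpha^* \colon \Homah{A}{B} \to \Homah{A_0}{B}$ for each semiprojective $\alpha$, and both obtain this from the point-norm continuity of precomposition $\Hom(A,B) \to \Hom(A_0,B)$ together with the quotient topologies. The paper compresses this into two sentences; your write-up simply makes explicit the quotient-topology bookkeeping and the observation that $\alpha^*$ applied to the class of a genuine $^*$-homomorphism is the class of the literal composite $\phi \circ \alpha$.
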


\begin{proof}
	It is enough to show that for every separable $C^*$-algebra $A_0$ and semiprojective $^*$-homomorphism $\alpha \colon A_0 \rightarrow A$, the map $\alpha^* \colon \Homah{A}{B} \rightarrow \Homah{A_0}{B}$ is continuous.  This is immediate since the corresponding map $\alpha^* \colon \Hom(A, B) \rightarrow \Hom(A_0, B)$ is continuous.
\end{proof}

The next lemma is stated in \cite[Theorem~2.3]{Thiel19}, where it is noted that
it follows from the closely related results in \cite[Section~3]{Blackadar85}.  We include the details as they are omitted in \cite{Thiel19}.

\begin{lemma}
  \label{lem:sp-homotopy-lift}
  Let $A$ and $B$ be separable $C^*$-algebras and suppose $B$ is the
  limit of an inductive system $(\underline{B}, \underline{\beta})$.  If
  $\alpha\colon A\to B$ is a semiprojective $^*$-homomorphism, then
  there exist an integer $n\geq1$ and a $^*$-homomorphism
  $\tilde{\alpha}\colon A\to B_n$ such that $\beta_{\infty, n}\circ
  \tilde{\alpha}$ is homotopic to $\alpha$.
\end{lemma}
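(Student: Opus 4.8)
The plan is to reduce the statement to the defining situation of semiprojectivity, in which the connecting maps are \emph{surjective}. The whole difficulty is that the given system $(\underline B, \underline \beta)$ need not have surjective $\beta_n$, so the definition of a semiprojective morphism cannot be applied to it directly. The strategy is therefore to replace $(\underline B, \underline \beta)$ by an auxiliary surjective system, apply semiprojectivity of $\alpha$ there, and then transport the resulting factorization back to a single $B_n$ up to homotopy. (When $\alpha = \mathrm{id}_A$ this is the classical fact of Blackadar; the passage to a general semiprojective $\alpha$ parallels the way Corollary~\ref{cor:homotopy-stability} generalizes the absolute case.)

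First I would form the surjective system $\widehat B_n := \prod_{m \geq n} B_m$ with the coordinate projections as bonding maps, so that $\varinjlim \widehat B_n = \prod_m B_m / \bigoplus_m B_m =: \widehat B$. Using the standard description $\ker \beta_{\infty, k} = \{ x \in B_k : \|\beta_{m, k}(x)\| \to 0 \}$, one checks that $\beta_{\infty, k}(b) \mapsto [(\beta_{m, k}(b))_{m \geq k}]$ is well defined and extends to a $^*$-homomorphism $\iota \colon B \to \widehat B$. Applying semiprojectivity of $\alpha$ to this surjective system and to $\iota$ yields an integer $n$ and a $^*$-homomorphism $\tilde\rho = \prod_{m \geq n} \tilde\rho_m \colon A \to \widehat B_n$ lifting $\iota \circ \alpha$. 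Composing $\widehat B$ with the canonical map $\prod_m B_m / \bigoplus_m B_m \to \prod_m B / \bigoplus_m B$ induced by the $\beta_{\infty, m}$ (under which $\iota$ becomes the constant embedding of $B$) shows that the lifting condition says precisely that $\|\beta_{\infty, m}(\tilde\rho_m(a)) - \alpha(a)\| \to 0$ for every $a \in A$; that is, the finite-stage maps $\tilde\rho_m \colon A \to B_m$ satisfy $\beta_{\infty, m} \circ \tilde\rho_m \to \alpha$ in the point-norm topology.

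It remains to promote this point-norm convergence to an honest homotopy at a single stage, i.e.\ to find $n$ and $\tilde\alpha \colon A \to B_n$ with $\beta_{\infty, n} \circ \tilde\alpha$ homotopic to $\alpha$. Here I would invoke the relative homotopy stability of Corollary~\ref{cor:homotopy-stability} together with the quantitative lifting of Theorem~\ref{thm:lifting}: after passing to a subsequence and pushing the $\tilde\rho_m$ forward along the $\beta$'s, I can arrange that consecutive maps $\beta_{\infty, m} \circ \tilde\rho_m$ agree to within a summable error on a fixed exhausting family of finite sets, and then assemble a telescoping homotopy whose tail converges to $\alpha$, realizing each step as a genuine homotopy inside the mapping cylinder $C_n = \{ (b, f) \in B_n \oplus C([0,1], B) : f(0) = \beta_{\infty, n}(b) \}$. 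The honest difficulty, and the step I expect to be the main obstacle, is exactly this last one: converting a limiting (pointwise) statement into a finite-stage homotopy is where the non-surjectivity of the $\beta_n$ is felt, and it is the technical heart of Blackadar's mapping-telescope method in \cite[Section~3]{Blackadar85}. Controlling the lifts quantitatively through Theorem~\ref{thm:lifting} so that the concatenated homotopies actually converge, rather than merely producing approximate factorizations, is what will require the most care.
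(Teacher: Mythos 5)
Your reduction to the surjective system $\widehat B_n = \prod_{m\geq n} B_m$ is correct and well executed: the isometric embedding $\iota\colon B \to \widehat B = \prod_m B_m / \bigoplus_m B_m$ exists as you describe, semiprojectivity of $\alpha$ applies to this system, and the coordinates $\tilde\rho_m\colon A\to B_m$ of the lift do satisfy $\|\beta_{\infty,m}(\tilde\rho_m(a)) - \alpha(a)\|\to 0$ for all $a\in A$. But the proof stops there: the step you yourself flag as ``the main obstacle''---converting this point-norm convergence into a homotopy $\beta_{\infty,n}\circ\tilde\alpha \simeq \alpha$ at a finite stage---is a genuine gap, and the telescoping sketch you give for it would not assemble as stated. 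The obstruction is structural: Corollary~\ref{cor:homotopy-stability} (and likewise Theorem~\ref{thm:lifting}) is intrinsically \emph{relative}; it converts closeness of two $^*$-homomorphisms defined on an algebra $A_1$ into a homotopy only \emph{after precomposition} with a semiprojective morphism $A_0\to A_1$. Your maps $\beta_{\infty,m}\circ\tilde\rho_m$ and $\alpha$ are all defined on $A$ itself, and $A$ is not assumed semiprojective, so these tools (and any concatenation of homotopies obtained from them) can only produce homotopies of precompositions $\alpha\circ\gamma$ with some semiprojective $\gamma$ into $A$---which is not what the lemma asks for. Point-norm convergence of $^*$-homomorphisms out of a general separable $C^*$-algebra does not imply eventual homotopy, so some second input is genuinely required.

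The missing idea---and the one move the paper's own proof makes---is Lemma~\ref{lem:semiproj-factor}: factor $\alpha = \alpha_1\circ\alpha_0$ with $\alpha_0\colon A\to A_1$ and $\alpha_1\colon A_1\to B$ both semiprojective. With this in hand your argument closes in one stroke, with no telescope and no infinite concatenation: run your lifting argument on $\alpha_1$ instead of $\alpha$, obtaining $\tilde\rho_m\colon A_1\to B_m$ with $\beta_{\infty,m}\circ\tilde\rho_m\to\alpha_1$ point-norm; then apply Corollary~\ref{cor:homotopy-stability} to the semiprojective morphism $\alpha_0$ to get a finite set $\mathcal G\subseteq A_1$ and $\delta>0$, choose $m$ with $\|\beta_{\infty,m}(\tilde\rho_m(a)) - \alpha_1(a)\| < \delta$ for all $a\in\mathcal G$, and conclude that $\beta_{\infty,m}\circ(\tilde\rho_m\circ\alpha_0)$ is homotopic to $\alpha_1\circ\alpha_0 = \alpha$; so $\tilde\alpha = \tilde\rho_m\circ\alpha_0$ works. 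This need for two semiprojective ``layers''---one to get an approximate factorization, one to upgrade it to a homotopy---is exactly why Blackadar's Theorem~3.1 in \cite{Blackadar85} is stated for composites of two semiprojective morphisms. The paper's proof is precisely the factorization of Lemma~\ref{lem:semiproj-factor} followed by a citation of that theorem; your surjective-product construction is a reasonable self-contained substitute for part of the citation, but without the factorization the final step cannot be completed.
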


\begin{proof}
  Lemma~\ref{lem:semiproj-factor} implies there are a separable
  $C^*$-algebra $A_1$ and semiprojective $^*$-homomorphisms
  $\alpha_0\colon A\to A_1$ and $\alpha_1\colon A_1\to B$ such that
  $\alpha = \alpha_1\circ \alpha_0$.  Now,
  \mbox{\cite[Theorem~3.1]{Blackadar85}} provides an integer $n\geq 1$ and a
  $^*$-homomorphism $\tilde{\alpha}\colon A\to B_n$ such that
  $\beta_{\infty, n}\circ \tilde{\alpha}$ is homotopic to $\alpha_1  \circ \alpha_0$.
\end{proof}

The following result gives a slightly simpler description of the topology by restricting the collection of semiprojective morphisms that need to be considered in Definition~\ref{def:top-asymp}.

\begin{proposition}\label{prop:top-shape-system}
	If $A$ and $B$ are $C^*$-algebras with $A$ separable and $(\underline{A}, \underline{\alpha})$ is a shape system for $A$, then the topology on $[[A, B]]$ is the weakest topology such that for all $n \geq 1$, the map $\alpha_{\infty, n}^* \colon [[A, B]] \rightarrow \Homah{A_n}{B}$ of Lemma~\ref{lem:factor} is continuous.
\end{proposition}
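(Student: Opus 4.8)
The plan is to prove the two topologies coincide by showing each refines the other. Write $\mathcal{T}$ for the topology of Definition~\ref{def:top-asymp}(iii) (the weakest topology making $\alpha^*$ continuous for \emph{all} semiprojective $\alpha\colon A_0\to A$) and $\mathcal{T}'$ for the weakest topology making every $\alpha_{\infty,n}^*$ continuous. For the inclusion $\mathcal{T}'\subseteq\mathcal{T}$, I would first note that each structure map $\alpha_{\infty,n}\colon A_n\to A$ is itself semiprojective: since $(\underline A,\underline\alpha)$ is a shape system, $\alpha_n$ is semiprojective, and $\alpha_{\infty,n}=\alpha_{\infty,n+1}\circ\alpha_n$, so $\alpha_{\infty,n}$ is semiprojective by the stability of semiprojectivity under composition recalled in Section~\ref{sec:prelmin-shape}. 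Consequently each $\alpha_{\infty,n}^*$ is among the maps appearing in Definition~\ref{def:top-asymp}(iii); as $\mathcal{T}$ makes all of those continuous and $\mathcal{T}'$ is the coarsest topology making the subfamily $\{\alpha_{\infty,n}^*\}$ continuous, we get $\mathcal{T}'\subseteq\mathcal{T}$.

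For the reverse inclusion it suffices to check that $\mathcal{T}'$ already makes $\alpha^*$ continuous for every separable $A_0$ and every semiprojective $\alpha\colon A_0\to A$, and the idea is to factor $\alpha^*$ through one of the maps $\alpha_{\infty,n}^*$. Applying Lemma~\ref{lem:sp-homotopy-lift} to the shape system $(\underline A,\underline\alpha)$ (with limit $A$) yields an integer $n$ and a $^*$-homomorphism $\tilde\alpha\colon A_0\to A_n$ such that $\alpha_{\infty,n}\circ\tilde\alpha$ is homotopic to $\alpha$. I would then assemble the factorization $\alpha^* = \tilde\alpha^{\#}\circ\alpha_{\infty,n}^*$, where $\tilde\alpha^{\#}\colon\Homah{A_n}{B}\to\Homah{A_0}{B}$ is induced by precomposition with $\tilde\alpha$.

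This rests on two verifications. First, $\psi\mapsto\psi\circ\tilde\alpha$ is continuous $\Hom(A_n,B)\to\Hom(A_0,B)$ in the point-norm topologies, and since asymptotically homotopic $^*$-homomorphisms $\psi_1,\psi_2\colon A_n\to B$ have asymptotically homotopic composites $\psi_i\circ\tilde\alpha$ (compose the witnessing homotopy with $\tilde\alpha$), this map descends through the quotient topologies to a continuous $\tilde\alpha^{\#}$. Second, tracking classes for $[[\phi]]\in[[A,B]]$: we have $\alpha_{\infty,n}^*([[\phi]])=[[\phi\circ\alpha_{\infty,n}]]\in\Homah{A_n}{B}$ and $\tilde\alpha^{\#}$ carries this to $[[\phi\circ\alpha_{\infty,n}\circ\tilde\alpha]]$. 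Because $\alpha_{\infty,n}\circ\tilde\alpha$ and $\alpha$ are homotopic $^*$-homomorphisms they coincide in $[[A_0,A]]$, so using that composition in $\AM$ is well-defined on homotopy classes and extends honest composition (Theorem~\ref{thm:asymptotic-category}), $[[\phi\circ\alpha_{\infty,n}\circ\tilde\alpha]]=[[\phi]]\circ[[\alpha_{\infty,n}\circ\tilde\alpha]]=[[\phi]]\circ[[\alpha]]=[[\phi\circ\alpha]]=\alpha^*([[\phi]])$. Thus $\alpha^*=\tilde\alpha^{\#}\circ\alpha_{\infty,n}^*$ is $\mathcal{T}'$-continuous, being a composite of a $\mathcal{T}'$-continuous map with a continuous one; hence $\mathcal{T}\subseteq\mathcal{T}'$ and the two topologies agree.

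The step I expect to be the main obstacle is the second verification above, namely that $[[\phi\circ\beta]]$ depends only on the homotopy class of the $^*$-homomorphism $\beta$. This is precisely where the well-definedness of composition in the asymptotic category and its compatibility with composition by genuine $^*$-homomorphisms are needed, as they convert the homotopy produced by Lemma~\ref{lem:sp-homotopy-lift} into an honest equality of morphisms in $\AM$. By contrast, the semiprojectivity of $\alpha_{\infty,n}$ and the continuity of $\tilde\alpha^{\#}$ are routine.
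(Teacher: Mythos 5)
Your proof is correct and takes essentially the same approach as the paper: both parts match, with the continuity of each $\alpha_{\infty,n}^*$ coming from semiprojectivity of $\alpha_{\infty,n}$, and the reverse inclusion coming from Lemma~\ref{lem:sp-homotopy-lift} and the factorization $\alpha^* = \tilde\alpha^* \circ \alpha_{\infty,n}^*$. The paper merely phrases the second half via test maps $f\colon X \rightarrow [[A,B]]$ instead of comparing the two topologies directly, and it leaves implicit the routine verifications (well-definedness and continuity of $\tilde\alpha^*$, and the identity $[[\phi\circ\alpha_{\infty,n}\circ\tilde\alpha]] = [[\phi\circ\alpha]]$) that you spell out.
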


\begin{proof}
	By the definition of the topology on $[[A, B]]$, each of the maps $\alpha_{\infty, n}^*$ is continuous.  It suffices to show that if $X$ is a topological space, $f \colon X \rightarrow [[A, B]]$, and $\alpha_{\infty, n}^* \circ f$ is continuous for all $n \geq 1$, then $f$ is continuous.  To this end, we must show $\alpha^* \circ f$ is continuous for all separable $C^*$-algebras $A_0$ and semiprojective $^*$-homomorphisms $\alpha \colon A_0\rightarrow A$.  Fix such $A_0$ and $\alpha$.  Using Lemma~\ref{lem:sp-homotopy-lift} and that $\alpha$ is semiprojective, we obtain an integer $n \geq 1$ and a $^*$-homomorphism $\tilde \alpha \colon A_0 \rightarrow A_n$ such that $\alpha_{\infty, n} \circ \tilde\alpha$ is homotopic to $\alpha$.  Then $\alpha^*$ factors as
	\[ [[A, B]] \xrightarrow{\alpha_{\infty, n}^*} \Homah{A_n}{B} \xrightarrow{\tilde \alpha^*} \Homah{A_0}{B}. \]
	Since $\alpha_{\infty, n}^* \circ f$ and $\tilde \alpha^*$ are continuous, so is $\alpha^* \circ f$.
\end{proof}

The following standard result allows us to replace asymptotic homotopies of
$^*$-ho\-mo\-morphisms with genuine homotopies in the presence of
semiprojectivity.  When $\alpha = \mathrm{id}_A$, this follows from \cite[Proposition~25.1.7]{Blackadar98}, for example.  It will be strengthened in Lemma~\ref{lem:t0-equiv-homotopic}.

\begin{lemma}\label{lem:factor2}
	Let $A_0$, $A$, and $B$ be $C^*$-algebras with $A_0$ and $A$ separable and let $\alpha \colon A_0 \rightarrow B$ be a semiprojective $^*$-homomorphism. If $\phi, \psi \colon A \rightarrow B$ are $^*$-ho\-mo\-morphisms with $[[\phi]] = [[\psi]]$, then $\phi \circ \alpha$ and $\psi \circ \alpha$ are homotopic.
\end{lemma}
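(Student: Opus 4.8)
The plan is to upgrade the asymptotic homotopy witnessing $[[\phi]]=[[\psi]]$ into a genuine homotopy once it has been composed with $\alpha$, the semiprojectivity of $\alpha$ entering only through Lemma~\ref{lem:perturb}. (I read the hypothesis as $\alpha\colon A_0\to A$, so that $\phi\circ\alpha$ and $\psi\circ\alpha$ make sense; as written with $\alpha\colon A_0\to B$ the compositions are not defined.)

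First I would use $[[\phi]]=[[\psi]]$ to fix an asymptotic morphism $\theta\colon A\xrightarrow\approx C([0,1],B)$ with $\ev_0\circ\theta\cong\phi$ and $\ev_1\circ\theta\cong\psi$. Applying Lemma~\ref{lem:perturb} to the semiprojective map $\alpha$ and the asymptotic morphism $\theta$ produces an asymptotic morphism $\Theta\colon A_0\xrightarrow\approx C([0,1],B)$ with $\Theta\cong\theta\circ\alpha$ and with every $\Theta_t$ a genuine $^*$-homomorphism $A_0\to C([0,1],B)$. Post-composing the equivalence $\Theta\cong\theta\circ\alpha$ with the $^*$-homomorphisms $\ev_0,\ev_1\colon C([0,1],B)\to B$, and precomposing $\ev_0\circ\theta\cong\phi$ and $\ev_1\circ\theta\cong\psi$ with $\alpha$, then gives $\ev_0\circ\Theta\cong\phi\circ\alpha$ and $\ev_1\circ\Theta\cong\psi\circ\alpha$; concretely, $\ev_0\circ\Theta_t\to\phi\circ\alpha$ and $\ev_1\circ\Theta_t\to\psi\circ\alpha$ in the point-norm topology as $t\to\infty$.

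Next, fix any $t_0\geq 0$. Since $\Theta_{t_0}$ is an honest $^*$-homomorphism into $C([0,1],B)$, it is itself a homotopy between the $^*$-homomorphisms $\ev_0\circ\Theta_{t_0}$ and $\ev_1\circ\Theta_{t_0}$. It remains to connect these endpoints to $\phi\circ\alpha$ and $\psi\circ\alpha$, and for this I would reparametrize exactly as in the proof of Lemma~\ref{lem:factor}, but now obtaining a genuine homotopy of $^*$-homomorphisms. Choosing an increasing homeomorphism $r\colon[0,1)\to[t_0,\infty)$, I define $G\colon A_0\to C([0,1],B)$ by $G(a)(s)=\ev_0\circ\Theta_{r(s)}(a)$ for $s<1$ and $G(a)(1)=(\phi\circ\alpha)(a)$; the symmetric construction with $\ev_1$ produces a homotopy from $\ev_1\circ\Theta_{t_0}$ to $\psi\circ\alpha$. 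Concatenating the three homotopies and using transitivity of the homotopy relation yields $\phi\circ\alpha\sim\ev_0\circ\Theta_{t_0}\sim\ev_1\circ\Theta_{t_0}\sim\psi\circ\alpha$, which is the assertion.

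The one delicate point is the reparametrization step: one must verify that $G$ really lands in $C([0,1],B)$ and is a $^*$-homomorphism, the only issue being continuity of $s\mapsto G(a)(s)$ at the endpoint $s=1$. This is precisely where I would invoke the point-norm convergence $\ev_0\circ\Theta_t\to\phi\circ\alpha$ (from $\ev_0\circ\Theta\cong\phi\circ\alpha$) together with norm-continuity of $t\mapsto\Theta_t(a)$, and, crucially, the fact that each $\Theta_t$ — hence each $\ev_0\circ\Theta_t$ — is an actual $^*$-homomorphism rather than a merely asymptotically multiplicative map, so that the limit $\phi\circ\alpha$ sits at the end of a path of $^*$-homomorphisms. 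I do not expect Corollary~\ref{cor:homotopy-stability} to be needed here: Lemma~\ref{lem:perturb} already supplies the straightening of the asymptotic homotopy into genuine $^*$-homomorphisms, which is the whole source of the argument.
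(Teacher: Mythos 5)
Your proof is correct, and at the crucial step it takes a genuinely different route from the paper's. Both arguments begin identically: the asymptotic homotopy witnessing $[[\phi]]=[[\psi]]$ is straightened, via Lemma~\ref{lem:perturb}, into an asymptotic morphism into $C([0,1],B)$ all of whose maps are genuine $^*$-homomorphisms (and you are right that the codomain of $\alpha$ in the statement is a typo for $A$). The difference lies in how the endpoints $\ev_0\circ\Theta_{t_0}$ and $\ev_1\circ\Theta_{t_0}$ get connected to $\phi\circ\alpha$ and $\psi\circ\alpha$. The paper first factors $\alpha=\alpha_1\circ\alpha_0$ through a second semiprojective map (Lemma~\ref{lem:semiproj-factor}), applies Lemma~\ref{lem:perturb} only to $\alpha_1$, and then invokes homotopy stability (Corollary~\ref{cor:homotopy-stability}): since $\ev_i\circ\theta_{t_0}$ is $(\mathcal G,\delta)$-close to $\phi\circ\alpha_1$ resp.\ $\psi\circ\alpha_1$, composition with $\alpha_0$ makes them homotopic. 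You avoid both the factorization and the stability corollary by exploiting a feature special to this lemma: the asymptotic morphisms $\ev_i\circ\Theta$ are equivalent to morphisms that are \emph{constant} in $t$, so equivalence literally means point-norm convergence $\ev_0\circ\Theta_t\to\phi\circ\alpha$ and $\ev_1\circ\Theta_t\to\psi\circ\alpha$; since every map along the path, including the limit, is a genuine $^*$-homomorphism, compactifying the parameter ray produces an honest homotopy --- your $G$ is indeed a $^*$-homomorphism into $C([0,1],B)$, the only delicate point being endpoint continuity, which you handle correctly. Your argument is more economical (one use of semiprojectivity instead of two, and no appeal to the Blackadar-type stability result). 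What the paper's approach buys is a template it reuses in settings where one only has closeness on a finite set rather than convergence to a fixed $^*$-homomorphism --- this is exactly the situation in Theorem~\ref{thm:asymp-convergence}, Lemma~\ref{lem:seq-pt-wise}, and Lemma~\ref{lem:t0-equiv-homotopic}, where the convergence trick is unavailable and Corollary~\ref{cor:homotopy-stability} is genuinely needed --- so the machinery is developed there in any case.
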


\begin{proof} 
	Lemma~\ref{lem:semiproj-factor} implies there are a separable $C^*$-algebra $A_1$ and semiprojective $^*$-homomorphisms $\alpha_0 \colon A_0 \rightarrow A_1$ and $\alpha_1 \colon A_1 \rightarrow A$ such that $\alpha = \alpha_1 \circ \alpha_0$.  Use Corollary~\ref{cor:homotopy-stability} to produce a finite set $\mathcal G \subseteq A_1$ and $\delta > 0$ such that if $\phi', \psi' \colon A_1 \rightarrow B$ are $^*$-homomorphisms with $\|\phi'(a) - \psi'(a)\| < \delta$ for all $a \in \mathcal G$, then $\phi' \circ \alpha_0$ and $\psi' \circ \alpha_0$ are homotopic.
	
	By the definition of asymptotic homotopy equivalence and Lemma~\ref{lem:perturb}, there is an asymptotic morphism $\theta \colon A_1 \xrightarrow\approx C([0, 1], B)$ such that $\theta_t$ is a $^*$-homomorphism for all $t \in \mathbb R_+$, $\ev_0 \circ \theta \cong \phi \circ \alpha_1$, and $\ev_1 \circ \theta \cong \psi \circ \alpha_1$.  For some sufficiently large $t_0 \in \mathbb R_+$, we have $\|\theta_{t_0}(a)(0) - \phi(\alpha_1(a))\| < \delta$ and $\|\theta_{t_0}(a)(1) - \psi(\alpha_1(a)) \| < \delta$ for all $a \in \mathcal G$.  By the choice of $\mathcal G$ and $\delta$, $\phi \circ \alpha$ is homotopic to $\ev_0 \circ \theta_{t_0} \circ \alpha_0$ and $\psi \circ \alpha$ is homotopic to $\ev_1 \circ \theta_{t_0} \circ \alpha_0$.  Since $\ev_0 \circ \theta_{t_0} \circ \alpha_0$ and $\ev_1 \circ \theta_{t_0} \circ \alpha_0$ are homotopic, so are $\phi \circ \alpha$ and $\psi \circ \alpha$.
\end{proof}

The following result and its corollary below will allow us to deduce structural properties of the topology on asymptotic morphisms.  

\begin{proposition}\label{prop:discrete-factor}
	Let $A_0$, $A$, and $B$ be $C^*$-algebras with $A_0$ and $A$ separable and let $\alpha \colon A_0 \rightarrow A$ be a semiprojecive $^*$-homomorphisms.   The natural map
	\[ \alpha^* \colon \Homah{A}{B} \rightarrow \Homah{A_0}{B} \]
	factors through a discrete topological space.  Moreover, if $B$ is separable, then this space may be taken to be countable.
\end{proposition}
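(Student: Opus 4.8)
The plan is to read the factorization straight off the homotopy stability result, Corollary~\ref{cor:homotopy-stability}, and then package it by a standard quotient-topology argument. First I would apply that corollary to the semiprojective map $\alpha$, obtaining a finite set $\mathcal{G} \subseteq A$ and $\delta > 0$ such that any two $^*$-homomorphisms $\phi, \psi \colon A \to B$ with $\|\phi(a) - \psi(a)\| < \delta$ for all $a \in \mathcal{G}$ have $\phi \circ \alpha$ homotopic to $\psi \circ \alpha$, that is, $[[\phi \circ \alpha]] = [[\psi \circ \alpha]]$. Writing $q \colon \Hom(A,B) \to \Homah{A}{B}$ for the defining quotient map and $g = \alpha^* \circ q \colon \Hom(A,B) \to \Homah{A_0}{B}$, $\phi \mapsto [[\phi \circ \alpha]]$, this says exactly that $g$ is constant on every basic point-norm neighborhood $\{\psi : \|\psi(a) - \phi(a)\| < \delta \text{ for } a \in \mathcal{G}\}$. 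Hence $g$ is locally constant, so its fibers form a partition of $\Hom(A,B)$ into open sets.

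Next I would let $D = g(\Hom(A,B)) = \alpha^*(\Homah{A}{B})$ carry the discrete topology. Local constancy makes $g \colon \Hom(A,B) \to D$ continuous. Since $g$ factors set-theoretically through $q$ as $\alpha^* \circ q$ and $q$ is a topological quotient map (Definition~\ref{def:top-asymp}), the universal property of the quotient topology upgrades $\alpha^* \colon \Homah{A}{B} \to D$ to a continuous map. Composing it with the inclusion $D \hookrightarrow \Homah{A_0}{B}$, which is continuous because $D$ is discrete, recovers $\alpha^*$; this is precisely the desired factorization through a discrete space.

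For the final clause I would argue that $D$ is countable when $B$ is separable. The crucial point is that for separable $A$ and $B$ the point-norm topology on $\Hom(A,B)$ is separable: fixing a countable dense sequence $(a_k)$ in $A$ and using that every $^*$-homomorphism is contractive, the map $\phi \mapsto (\phi(a_k))_k$ is a homeomorphism of $\Hom(A,B)$ onto a subspace of the countable product $\prod_k \{b \in B : \|b\| \le \|a_k\|\}$ of closed balls of $B$, which is separable metrizable. A separable space admits no uncountable family of pairwise disjoint nonempty open sets, so the open partition of $\Hom(A,B)$ into the fibers of $g$ has only countably many nonempty blocks; equivalently, $D$ is countable.

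All the genuine content is carried by Corollary~\ref{cor:homotopy-stability}, which is already in hand, so the remaining steps are formal. The only place demanding a little care is the last paragraph, where I must verify separability of $\Hom(A,B)$ in the point-norm topology and then invoke the (elementary but essential) fact that a separable space cannot be partitioned into uncountably many nonempty open sets.
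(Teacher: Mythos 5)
Your proof is correct, but it takes a genuinely leaner route than the paper's, and the difference is worth recording. Both arguments rest on the same stability input, Corollary~\ref{cor:homotopy-stability}, but you apply it directly to $\alpha$, whereas the paper first factors $\alpha = \alpha_1 \circ \alpha_0$ through a third algebra $A_1$ (Lemma~\ref{lem:semiproj-factor}) and applies stability to $\alpha_0$. The reason the paper needs that detour is its choice of intermediate space: there it is the quotient $X = \Hom(A_1, B)/\!\sim$, with $\sim$ generated by $(\mathcal G, \delta)$-closeness, which lives \emph{upstream} of the asymptotic homotopy relation; consequently the map $\Homah{A}{B} \to X$, $[[\phi]] \mapsto q(\phi \circ \alpha_1)$, is only well defined after converting asymptotic homotopy of $\phi, \psi \colon A \to B$ into genuine homotopy of $\phi \circ \alpha_1$ and $\psi \circ \alpha_1$ --- that is Lemma~\ref{lem:factor2}, which itself consumes the semiprojective factorization. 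Your intermediate space $D = \alpha^*(\Homah{A}{B})$, retopologized discretely, lives \emph{downstream} inside $\Homah{A_0}{B}$, where asymptotic homotopy has already been quotiented out; well-definedness of the descended map is then just the (semiprojectivity-free) well-definedness of $\alpha^*$ on classes, and all the content collapses into local constancy of $\phi \mapsto [[\phi \circ \alpha]]$ on $\Hom(A, B)$, which is exactly what Corollary~\ref{cor:homotopy-stability} delivers (together with the immediate fact that genuinely homotopic $^*$-homomorphisms have equal asymptotic homotopy classes). So you avoid both Lemma~\ref{lem:semiproj-factor} and Lemma~\ref{lem:factor2} entirely. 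For the countability clause, the paper pushes second countability of $\Hom(A_1, B)$ through the open quotient map onto $X$, while you use the countable chain condition of the separable space $\Hom(A, B)$; these are equivalent in substance, and your explicit verification that $\Hom(A, B)$ is separable metrizable supplies a fact the paper asserts without proof. What the paper's route buys is a concrete combinatorial model of the discrete space, built from honest $^*$-homomorphisms out of $A_1$ with both factoring maps induced at the level of $\Hom$-spaces; but for this proposition and for its downstream uses (Corollary~\ref{cor:discrete-factor}, Theorem~\ref{thm:countability}, Theorem~\ref{thm:hausdorff}), where only the bare factorization $f \circ g = \alpha^*$ through a discrete (countable) space is invoked, your version serves equally well.
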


\begin{proof}
	By Lemma~\ref{lem:semiproj-factor}, there are a separable $C^*$-algebra $A_1$ and semiprojective $^*$-homomorphisms $\alpha_0 \colon A_0 \rightarrow A_1$ and $\alpha_1 \colon A_1 \rightarrow A$ such that $\alpha = \alpha_1 \circ \alpha_0$.  By Corollary~\ref{cor:homotopy-stability}, there are finite set $\mathcal G \subseteq A_1$ and $\delta > 0$ such that for all $^*$-homomorphisms $\phi, \psi \colon A_1 \rightarrow B$, if $\|\phi(a) - \psi(a)\| < \delta$ for all $a \in \mathcal G$, then $\phi \circ \alpha_0$ and $\psi \circ \alpha_0$ are homotopic.
	
	Let $\sim$ be the equivalence relation on $\Hom(A_1, B)$ generated by declaring $\phi \sim \psi$ if $\|\phi(a) - \psi(a)\| < \delta$ for all $a \in \mathcal G$.  Let $X = \Hom(A_1, B) / \sim$ be the quotient space and let $q \colon \Hom(A_1, B) \rightarrow X$ be the quotient map.  For all $x \in X$, the set $q^{-1}(x) \subseteq \Hom(A_1, B)$ is open, and hence $X$ is discrete.  It follows immediately that $q$ is open.  When $B$ is separable, $\Hom(A, B)$ is second countable (being a separable metrizable space), so $X$ is also second countable and therefore countable.
	
    If $\phi, \psi \colon A_1 \rightarrow B$ are $^*$-homomorphisms with $\phi \sim \psi$, then $\phi \circ \alpha_0$ and $\psi \circ \alpha_0$ are homotopic by the choice of $\mathcal G$ and $\delta$.  Therefore, there is a continuous map $f \colon X \rightarrow \Homah{A_0}{B}$ by $f(q(\phi)) = [[\phi \circ \alpha_0]]$ for all $^*$-homomorphisms $\phi \colon A_1 \rightarrow B$.  Now suppose $\phi, \psi \colon A \rightarrow B$ are $^*$-homomorphisms with $[[\phi]] = [[\psi]]$.  By Lemma~\ref{lem:factor2}, $\phi \circ \alpha_1$ and $\psi \circ \alpha_1$ are homotopic, and in particular, $\phi \circ \alpha_1 \sim \psi \circ \alpha_1$.  It follows that there is a continuous map $g \colon \Homah{A}{B} \rightarrow X$ defined by $g([[\phi]]) = q(\phi \circ \alpha_1)$ for all $^*$-homomorphisms $\phi \colon A \rightarrow B$.  By construction, for all $^*$-homomorphisms $\phi \colon A \rightarrow B$, we have 
    \[ f(g([[\phi]])) = f(q(\phi \circ \alpha_1)) = [[\phi \circ \alpha_1 \circ \alpha_0]] = [[\phi \circ \alpha]], \]
    and thus $f \circ g = \alpha^*$.
\end{proof}

\begin{corollary}\label{cor:discrete-factor}
	Let $A_0$, $A$, and $B$ be $C^*$-algebras with $A_0$ and $A$ separable and let $\alpha \colon A_0 \rightarrow A$ be semiprojecive.   The map
	\[ \alpha^* \colon [[A, B]] \rightarrow \Homah{A_0}{B} \]
	of Lemma~\ref{lem:factor}
	factors through a discrete topological space.  Moreover, if $B$ is separable, then this space may be taken to be countable.
\end{corollary}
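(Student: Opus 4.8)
The plan is to reduce to Proposition~\ref{prop:discrete-factor} by factoring $\alpha$ into two semiprojective pieces: the first piece will be handled on the full space $[[A, B]]$ of asymptotic morphisms, while the second piece is exactly the situation already treated in the proposition.

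First I would invoke Lemma~\ref{lem:semiproj-factor} to write $\alpha = \alpha_1 \circ \alpha_0$ for semiprojective $^*$-homomorphisms $\alpha_0 \colon A_0 \to A_1$ and $\alpha_1 \colon A_1 \to A$, where $A_1$ is separable. The key observation is that the map $\alpha^*$ of Lemma~\ref{lem:factor} decomposes as the composite
\[ [[A, B]] \xrightarrow{\ \alpha_1^*\ } \Homah{A_1}{B} \xrightarrow{\ \alpha_0^*\ } \Homah{A_0}{B}, \]
where $\alpha_1^*$ is the map of Lemma~\ref{lem:factor} associated to the semiprojective morphism $\alpha_1$, and $\alpha_0^*$ is the map of Proposition~\ref{prop:discrete-factor} associated to $\alpha_0$. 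To verify this identity, I would chase an asymptotic morphism $\phi \colon A \xrightarrow\approx B$: by Lemma~\ref{lem:factor}, $\alpha_1^*([[\phi]]) = [[\psi]]$ for some $^*$-homomorphism $\psi \colon A_1 \to B$ with $[[\psi]] = [[\phi \circ \alpha_1]]$, and then $\alpha_0^*([[\psi]]) = [[\psi \circ \alpha_0]]$. Since precomposition with the $^*$-homomorphism $\alpha_0$ respects asymptotic homotopy, $[[\psi \circ \alpha_0]] = [[\phi \circ \alpha_1 \circ \alpha_0]] = [[\phi \circ \alpha]] = \alpha^*([[\phi]])$, as desired.

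With the decomposition in hand, the two factors supply the needed structure. The map $\alpha_1^*$ is continuous: indeed, it is one of the maps used to define the topology on $[[A, B]]$ in Definition~\ref{def:top-asymp}(iii), since $\alpha_1$ is semiprojective. By Proposition~\ref{prop:discrete-factor} applied to $\alpha_0$, the map $\alpha_0^*$ factors as $f \circ g$ through a discrete space $X$, with $g \colon \Homah{A_1}{B} \to X$ and $f \colon X \to \Homah{A_0}{B}$ continuous, and $X$ may be taken countable when $B$ is separable. Therefore
\[ \alpha^* = \alpha_0^* \circ \alpha_1^* = f \circ (g \circ \alpha_1^*), \]
where $g \circ \alpha_1^* \colon [[A, B]] \to X$ and $f \colon X \to \Homah{A_0}{B}$ are continuous. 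This exhibits the required factorization of $\alpha^*$ through the discrete space $X$.

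The argument is essentially bookkeeping; there is no serious obstacle once Proposition~\ref{prop:discrete-factor} is available. The only point requiring care is the verification of the identity $\alpha^* = \alpha_0^* \circ \alpha_1^*$, which hinges on the fact that asymptotic homotopy is preserved under precomposition by a $^*$-homomorphism, so that the representative $^*$-homomorphism produced by Lemma~\ref{lem:factor} at the level of $A_1$ may be freely composed with $\alpha_0$ without changing the resulting class in $\Homah{A_0}{B}$.
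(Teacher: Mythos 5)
Your proof is correct and follows essentially the same route as the paper: factor $\alpha = \alpha_1 \circ \alpha_0$ via Lemma~\ref{lem:semiproj-factor}, write $\alpha^* = \alpha_0^* \circ \alpha_1^*$, and apply Proposition~\ref{prop:discrete-factor} to $\alpha_0$. The extra verification you give of the identity $\alpha^* = \alpha_0^* \circ \alpha_1^*$ (using that precomposition by a $^*$-homomorphism preserves asymptotic homotopy) is exactly the bookkeeping the paper leaves implicit.
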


\begin{proof}
	Using Lemma~\ref{lem:semiproj-factor}, there are a separable $C^*$-algebra $A_1$ and semiprojective $^*$-homomorphisms $\alpha_0 \colon A_0 \rightarrow A_1$ and $\alpha_1 \colon A_1 \rightarrow A$ such that $\alpha = \alpha_1 \circ \alpha_0$. Then $\alpha^*$ factors as
	\[ [[A, B]] \xrightarrow{\alpha_1^*} \Homah{A_1}{B} \xrightarrow{\alpha_0^*} \Homah{A_0}{B}. \]
	The result follows from Proposition~\ref{prop:discrete-factor} applied to $\alpha_0$.
\end{proof}

The factorization result in the previous corollary allows us to prove countability axioms for the space $[[A, B]]$.

\begin{theorem}\label{thm:countability}
	If $A$ and $B$ are $C^*$-algebras with $A$ separable, then $[[A, B]]$ is first countable.  If, in addition, $B$ is separable, then $[[A, B]]$ is second countable.
\end{theorem}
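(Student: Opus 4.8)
The plan is to reduce the given topology on $[[A, B]]$ to an \emph{initial} (weak) topology with respect to a \emph{countable} family of maps into \emph{discrete} spaces, and then to transport first and second countability from a countable product of such spaces. The whole point of routing through discrete spaces is that the target $\Homah{A_n}{B}$ appearing in the definition of the topology is a quotient of $\Hom(A_n, B)$, and quotients need not be second countable; for non-separable $B$ even $\Hom(A_n,B)$ may fail to be second countable. So the factorization provided by Corollary~\ref{cor:discrete-factor} is exactly what replaces these potentially pathological quotients by well-behaved (countable, when $B$ is separable) discrete spaces.

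First I would fix a shape system $(\underline A, \underline \alpha)$ for $A$, which exists because $A$ is separable (see \cite[Theorem~4.3]{Blackadar85}). By Proposition~\ref{prop:top-shape-system}, the topology $\tau$ on $[[A, B]]$ is the coarsest topology making the \emph{countable} family of maps $\alpha_{\infty, n}^* \colon [[A, B]] \to \Homah{A_n}{B}$ ($n \geq 1$) continuous. Next I would apply Corollary~\ref{cor:discrete-factor} to each $\alpha_{\infty, n}$ to obtain a discrete space $X_n$ together with continuous maps $g_n \colon [[A, B]] \to X_n$ and $f_n \colon X_n \to \Homah{A_n}{B}$ with $\alpha_{\infty, n}^* = f_n \circ g_n$, and with $X_n$ countable whenever $B$ is separable.

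The crux of the argument is that the initial topology is unchanged if one replaces the family $\{\alpha_{\infty, n}^*\}$ by $\{g_n\}$. Let $\tau_g$ denote the coarsest topology making all $g_n$ continuous. On one hand, each $g_n$ is continuous for $\tau$, so $\tau$ is at least as fine as $\tau_g$. On the other hand, since $\alpha_{\infty, n}^* = f_n \circ g_n$ with $f_n$ continuous, each $\alpha_{\infty, n}^*$ is $\tau_g$-continuous; as $\tau$ is the coarsest topology making the $\alpha_{\infty, n}^*$ continuous, $\tau$ is at least as coarse as $\tau_g$. Hence $\tau = \tau_g$. I would then package the $g_n$ into the diagonal map $\iota = (g_n)_{n} \colon [[A, B]] \to \prod_{n} X_n$; by transitivity of initial topologies (the product topology being initial with respect to the projections), $\tau = \tau_g$ is precisely the initial topology with respect to the single map $\iota$.

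Finally I would invoke two standard topological facts. A countable product of discrete spaces is first countable — the cylinders fixing finitely many coordinates furnish a countable neighborhood basis at each point — and it is second countable when each factor is countable, since then these cylinders are countably many and form a basis. Moreover, if $Z$ is first countable (resp.\ second countable) and a space carries the initial topology of a single map $h$ into $Z$, then it too is first countable (resp.\ second countable): pulling back a countable neighborhood basis of $h(y)$ gives a countable neighborhood basis at $y$, and pulling back a countable basis of $Z$ gives a countable basis. Applying this with $h = \iota$ and $Z = \prod_n X_n$ yields that $[[A, B]]$ is first countable in general and second countable when $B$ is separable. I expect the only delicate point to be the verification that $\{\alpha_{\infty,n}^*\}$ and $\{g_n\}$ induce the same initial topology, which rests squarely on the continuity of \emph{both} legs $f_n$ and $g_n$ of the factorization supplied by Corollary~\ref{cor:discrete-factor}; everything else is routine point-set topology.
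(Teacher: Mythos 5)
Your proposal is correct and takes essentially the same route as the paper: both fix a shape system, reduce via Proposition~\ref{prop:top-shape-system} to the countable family $\alpha_{\infty,n}^*$, and then use the two-sided continuity in the factorization $\alpha_{\infty,n}^* = f_n \circ g_n$ from Corollary~\ref{cor:discrete-factor} to derive countability from the discrete spaces $X_n$. The only difference is packaging — your initial-topology/product-of-discrete-spaces formalism versus the paper's explicit neighborhood bases $U_n = g_n^{-1}(g_n([[\phi]]))$, which are exactly the pullbacks of your cylinder sets under the diagonal map.
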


\begin{proof}
	Fix a shape system $(\underline A, \underline \alpha)$ for $A$.  By Corollary~\ref{cor:discrete-factor}, for each $n \in \mathbb N$, there are a discrete space $X_n$ and continuous maps $g_n \colon [[A, B]] \rightarrow X_n$ and $f_n \colon X_n \rightarrow \Homah{A_n}{B}$ such that $\alpha_{\infty, n}^* = f_n \circ g_n$.  Let $\phi \colon A \xrightarrow\approx B$ be an asymptotic morphism and let $U_n = g_n^{-1}(g_n([[\phi]]))$ for all $n \in \mathbb N$.  We will show the sets $\{U_n : n \in \mathbb N \}$ form a neighborhood basis for $[[\phi]]$ in $[[A, B]]$.
	
	For all $n \in \mathbb N$, the set $U_n$ is open as $g_n$ is continuous and $X_n$ is discrete.  By Proposition~\ref{prop:top-shape-system}, open sets of the form $(\alpha_{\infty, n}^*)^{-1}(V_n)$, for an open set $V_n \subseteq \Homah{A_n}{B}$ and $n \in \mathbb N$, form a basis for $[[A, B]]$.  Fix such an open set $(\alpha_{\infty, n}^*)^{-1}(V_n)$ containing $[[\phi]]$.  Then $g_n([[\phi]]) \in f_n^{-1}(V_n)$, and hence $U_n \subseteq (\alpha_{\infty, n}^*)^{-1}(V_n)$, which proves the claim.
	
	When $B$ is separable, we may take each $X_n$ to be countable by Corollary~\ref{cor:discrete-factor}.  Then
	\[ \bigcup_{n=1}^\infty \{ g_n^{-1}(x_n) : x_n \in X_n \} \]
	is a countable collection of open subsets of $[[A, B]]$ containing a neighborhood basis for each point in $[[A, B]]$.  Hence $[[A, B]]$ is second countable.  
\end{proof}

\subsection{Convergence in $[[A, B]]$}\label{sec:top-conv}

Since the topology on $[[A, B]]$ is first countable (Theorem~\ref{thm:countability}), it is determined by its convergent sequences.  This subsection provides a characterization of sequential convergence in $[[A, B]]$ analogous to the characterization of convergence in $E(A, B)$ stated in Theorem~\ref{thm:main} and the one of convergence in $KK(A, B)$ given in \cite[Theorem~3.5]{Dadarlat05}. We begin with the following weak continuity result for composition in $[[A, B]]$.  We will later show in Theorem~\ref{thm:joint-continuity} that composition is in fact jointly continuous.

\begin{lemma}\label{lem:asymp-convergence}
	Let $A$, $B$, and $D$ be $C^*$-algebras with $A$ separable.  If $(\phi_n)_{n=1}^\infty$ is a sequence in $\Hom(B, D)$ converging to $\phi$ and $x \in [[A, B]]$, then $[[\phi_n]] \circ x \rightarrow [[\phi]] \circ x$ in $[[A, D]]$.
\end{lemma}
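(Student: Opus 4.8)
The plan is to reduce convergence in $[[A,D]]$ to point‑norm convergence of a sequence of genuine $^*$-homomorphisms by testing against a shape system. First I would fix a shape system $(\underline A, \underline \alpha)$ for $A$. By Proposition~\ref{prop:top-shape-system}, the topology on $[[A, D]]$ is the weakest one making each $\alpha_{\infty, m}^* \colon [[A, D]] \to \Homah{A_m}{D}$ continuous. For such an initial topology a sequence converges if and only if its image under each $\alpha_{\infty, m}^*$ converges (the forward direction is continuity; conversely, since basic open neighborhoods are finite intersections of preimages $(\alpha_{\infty, m}^*)^{-1}(V_m)$, componentwise convergence forces eventual membership in each basic neighborhood). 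Thus it suffices to fix $m \in \mathbb N$ and prove $\alpha_{\infty, m}^*([[\phi_n]] \circ x) \to \alpha_{\infty, m}^*([[\phi]] \circ x)$ in $\Homah{A_m}{D}$.

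Next I would produce, for each $m$, a $^*$-homomorphism representative of these classes that depends on $x$ but not on $n$. Write $x = [[\chi]]$ for an asymptotic morphism $\chi \colon A \xrightarrow\approx B$. Since $\alpha_m$ is semiprojective, so is $\alpha_{\infty, m} = \alpha_{\infty, m+1} \circ \alpha_m$, and Lemma~\ref{lem:perturb} yields an asymptotic morphism $\tilde\chi \colon A_m \xrightarrow\approx B$ with each $\tilde\chi_t$ a $^*$-homomorphism and $\tilde\chi \cong \chi \circ \alpha_{\infty, m}$. Because $\phi_n$ is a $^*$-homomorphism (hence norm-decreasing), post-composition preserves the equivalence $\cong$, so $\phi_n \circ \chi \circ \alpha_{\infty, m} \cong \phi_n \circ \tilde\chi$, and each slice $(\phi_n \circ \tilde\chi)_t = \phi_n \circ \tilde\chi_t$ is again a $^*$-homomorphism. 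Running the homotopy $\theta_t(a)(s) = (\phi_n \circ \tilde\chi)_{st}(a)$ from the proof of Lemma~\ref{lem:factor} then gives $\alpha_{\infty, m}^*([[\phi_n]] \circ x) = [[\phi_n \circ \tilde\chi]] = [[\phi_n \circ \tilde\chi_0]]$, and likewise $\alpha_{\infty, m}^*([[\phi]] \circ x) = [[\phi \circ \tilde\chi_0]]$.

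The point is that $\tilde\chi_0 \colon A_m \to B$ is a single $^*$-homomorphism, independent of $n$. It then remains to observe that $\phi_n \circ \tilde\chi_0 \to \phi \circ \tilde\chi_0$ in the point-norm topology of $\Hom(A_m, D)$: for each $a \in A_m$ the element $\tilde\chi_0(a) \in B$ is fixed, so $\|\phi_n(\tilde\chi_0(a)) - \phi(\tilde\chi_0(a))\| \to 0$ by the hypothesis $\phi_n \to \phi$. Since the quotient map $\Hom(A_m, D) \to \Homah{A_m}{D}$ of Definition~\ref{def:top-asymp} is continuous, I conclude $[[\phi_n \circ \tilde\chi_0]] \to [[\phi \circ \tilde\chi_0]]$, which is precisely the required convergence of the $m$-th components; applying this for every $m$ yields $[[\phi_n]] \circ x \to [[\phi]] \circ x$.

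The only genuinely delicate step is arranging the representative $\phi_n \circ \tilde\chi_0$ with $\tilde\chi_0$ \emph{uniform} in $n$. This works precisely because the perturbation of Lemma~\ref{lem:perturb} is applied to $\chi$ and $\alpha_{\infty, m}$ alone, while the varying maps $\phi_n$ enter only by post-composition, which commutes with everything in sight and preserves $\cong$. Everything else is the standard characterization of convergence for an initial topology together with continuity of the quotient map onto $\Homah{A_m}{D}$.
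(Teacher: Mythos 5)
Your proof is correct and takes essentially the same route as the paper's: both reduce convergence to the component maps defining the initial topology (the paper tests against all semiprojective $\alpha \colon A_0 \to A$ rather than a shape system, an immaterial difference), and both hinge on producing an $n$-independent $^*$-homomorphism representative $\psi$ of $x$ pulled back along the semiprojective map, so that the claim follows from point-norm convergence of $\phi_n \circ \psi$ and continuity of the quotient map onto $\Homah{A_m}{D}$. The only cosmetic difference is that the paper cites Lemma~\ref{lem:factor} as a black box together with associativity of the composition product, whereas you unwind its proof inline via Lemma~\ref{lem:perturb} and the reparametrization homotopy.
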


\begin{proof}
	It suffices to show that for each semiprojective $^*$-homomorphism $\alpha \colon A_0 \rightarrow A$, we have $\alpha^*([[\phi_n]] \circ x) \rightarrow \alpha^*([[\phi]] \circ x)$ in $\Homah{A_0}{D}$.  Note that for each $n \in \mathbb N$, $\alpha^*([[\phi_n]] \circ x) = [[\phi_n]] \circ \alpha^*(x)$ and $\alpha^*([[\phi]] \circ x) = [[\phi]] \circ \alpha^*(x)$.  By Lemma~\ref{lem:factor}, there is a $^*$-homomorphism $\psi \colon A_0 \rightarrow B$ such that $\alpha^*(x) = [[\psi]]$.  Since $\phi_n \circ \psi \rightarrow \phi \circ \psi$ in $\Hom(A_0, D)$, the result follows.  
\end{proof}

Write $\mathbb N$ for the natural numbers equipped with the discrete topology and let $\mathbb N^\dag = \mathbb N \cup \{\infty\}$ be the one-point compactification of $\mathbb N$.  For $C^*$-algebras $A$ and $B$ with $A$ separable, $y \in [[A, C(\mathbb N^\dag, B)]]$, and $m \in \mathbb N^\dag$, define $y(m) = [[\ev_m]] \circ y \in [[A, B]]$.

\begin{theorem}[Pimsner's Condition]\label{thm:asymp-convergence}
	Suppose $A$ and $B$ be $C^*$-algebras with $A$ separable, $(x_m)_{m=1}^\infty$ is a sequence in $[[A, B]]$, and $x \in [[A, B]]$.  Then $x_m \rightarrow x$ in $[[A, B]]$ if and only if there exists $y \in [[A, C(\mathbb N^\dag, B)]]$ such that $y(m) = x_m$ for all $m \in \mathbb N$ and $y(\infty) = x$.
\end{theorem}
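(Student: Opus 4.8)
The plan is to reduce both directions to the shape–system description of the topology and then to build the witnessing element $y$ by a homotopy–limit construction. Fix a shape system $(\underline A, \underline\alpha)$ for $A$. By Definition~\ref{def:top-asymp} the topology on $[[A,B]]$ is the initial topology for the maps $\alpha^*\colon[[A,B]]\to\Homah{A_0}{B}$ ranging over semiprojective $\alpha\colon A_0\to A$, so a sequence converges precisely when its image under each $\alpha^*$ converges; by Proposition~\ref{prop:top-shape-system} it suffices to test the maps $\alpha_{\infty,n}^*$, and by Corollary~\ref{cor:discrete-factor} each of these factors as $f_n\circ g_n$ with $g_n\colon[[A,B]]\to X_n$ continuous and $X_n$ discrete.

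For the ``if'' direction, suppose such a $y$ exists and fix a semiprojective $\alpha\colon A_0\to A$. By Lemma~\ref{lem:factor}, $\alpha^*(y)=[[\psi]]$ for a $^*$-homomorphism $\psi\colon A_0\to C(\mathbb N^\dag,B)$. Since precomposing by $\alpha$ and postcomposing by $\ev_m$ associate, $\alpha^*(x_m)=\alpha^*(y(m))=[[\ev_m\circ\psi]]$ and $\alpha^*(x)=[[\ev_\infty\circ\psi]]$. For each $a\in A_0$, the element $\psi(a)\in C(\mathbb N^\dag,B)$ is continuous at $\infty$, so $\ev_m\circ\psi\to\ev_\infty\circ\psi$ in the point-norm topology; as the quotient map $\Hom(A_0,B)\to\Homah{A_0}{B}$ is continuous, $\alpha^*(x_m)\to\alpha^*(x)$. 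Since $\alpha$ was arbitrary, $x_m\to x$.

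For the converse, assume $x_m\to x$. For each $n$, continuity of $g_n$ together with discreteness of $X_n$ forces $g_n(x_m)=g_n(x)$, hence $\alpha_{\infty,n}^*(x_m)=\alpha_{\infty,n}^*(x)$, for all large $m$; pick a strictly increasing $(M_n)$ so that this holds for $m\geq M_n$ and set $N(m)=\max\{n:M_n\leq m\}$, so $N(m)\to\infty$. Using Theorem~\ref{thm:diagram-rep}, represent $x$ as $\hlim(\underline\phi,\underline h)$ and each $x_m$ as $\hlim(\underline{\phi^{(m)}},\underline{h^{(m)}})$, strong homotopy morphisms into the constant system $B$ (with $f=\id_{\mathbb N}$). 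By Proposition~\ref{prop:limit-factor} this gives $\alpha_{\infty,n}^*(x)=[[\phi_n]]$ and $\alpha_{\infty,n}^*(x_m)=[[\phi_n^{(m)}]]$, so $[[\phi_n]]=[[\phi_n^{(m)}]]$ in $[[A_n,B]]$ whenever $n\leq N(m)$. Now splice: for $m$ with $N(m)\geq 2$ define a strong homotopy morphism $(\underline{c^{(m)}},\underline{d^{(m)}})$ by taking $c_n^{(m)}=\phi_n$, $d_n^{(m)}=h_n$ for $n\leq N(m)-2$, taking $c_n^{(m)}=\phi_n^{(m)}$, $d_n^{(m)}=h_n^{(m)}$ for $n\geq N(m)$, and at the junction level $N(m)-1$ using the concatenation of $h_{N(m)-1}$ with a homotopy from $\phi_{N(m)}\circ\alpha_{N(m)-1}$ to $\phi_{N(m)}^{(m)}\circ\alpha_{N(m)-1}$; for the finitely many $m$ with $N(m)=1$, simply take $(\underline{c^{(m)}},\underline{d^{(m)}})=(\underline{\phi^{(m)}},\underline{h^{(m)}})$. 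The main obstacle is producing that junction homotopy: the relation $[[\phi_{N(m)}]]=[[\phi_{N(m)}^{(m)}]]$ furnishes only an \emph{asymptotic} homotopy of $^*$-homomorphisms, which need not be a genuine one. This is exactly where semiprojectivity is used: precomposing with the semiprojective connecting map $\alpha_{N(m)-1}$ and invoking Lemma~\ref{lem:factor2} upgrades the asymptotic homotopy to a genuine homotopy between $\phi_{N(m)}\circ\alpha_{N(m)-1}$ and $\phi_{N(m)}^{(m)}\circ\alpha_{N(m)-1}$.

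Two facts then finish the argument. First, since $(\underline{c^{(m)}},\underline{d^{(m)}})$ differs from $(\underline{\phi^{(m)}},\underline{h^{(m)}})$ in only finitely many maps, their homotopy limits are equivalent: altering finitely many stages changes each $\Phi_n$ of Definition~\ref{def:homotopy-limit} only on a bounded subset of $\mathbb R_{\geq n}$, hence by an element of $C_0$, so the induced $^*$-homomorphism into $B_{\rm as}$ is unchanged; thus $[[\hlim(\underline{c^{(m)}},\underline{d^{(m)}})]]=x_m$. Second, for each fixed $n$ the sequence $(c_n^{(m)})_m$ is eventually equal to $\phi_n$ and $(d_n^{(m)})_m$ is eventually equal to $h_n$, so setting the values at $\infty$ to be $\phi_n$ and $h_n$ yields $^*$-homomorphisms $\psi_n\colon A_n\to C(\mathbb N^\dag,B)$ and homotopies $k_n$ assembling into a strong homotopy morphism $(\underline\psi,\underline k)$ into the constant system $C(\mathbb N^\dag,B)$ with $\ev_m\circ\psi_n=c_n^{(m)}$ and $\ev_m\circ k_n=d_n^{(m)}$ for all $m\in\mathbb N^\dag$. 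Put $y=[[\hlim(\underline\psi,\underline k)]]$. Because the homotopy limit is natural under postcomposition by $^*$-homomorphisms on the target, $\ev_m\circ\hlim(\underline\psi,\underline k)\cong\hlim(\ev_m\circ\underline\psi,\ev_m\circ\underline k)=\hlim(\underline{c^{(m)}},\underline{d^{(m)}})$ for every $m\in\mathbb N^\dag$, giving $y(m)=x_m$ for $m\in\mathbb N$ and $y(\infty)=x$, as required.
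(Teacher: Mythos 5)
Your proposal is correct and takes essentially the same route as the paper's proof: the forward direction simply re-derives Lemma~\ref{lem:asymp-convergence} (point-norm convergence of $\ev_m$ to $\ev_\infty$ pushed through Lemma~\ref{lem:factor}), and the converse uses the identical chain of ingredients---Theorem~\ref{thm:diagram-rep}, Proposition~\ref{prop:limit-factor}, the discrete factorization of Corollary~\ref{cor:discrete-factor}, and Lemma~\ref{lem:factor2} via semiprojectivity of the connecting maps---to splice the representing strong homotopy morphisms into a single strong homotopy morphism with target $C(\mathbb N^\dag, B)$ and take its homotopy limit. Indeed, your $\psi_n, k_n$ with junctions governed by $N(m)$ are exactly the paper's $\Phi_n, H_n$ with cutoffs $m_n$, viewed coordinatewise; the only difference is that you spell out two verifications (tail-invariance of $\hlim$ and its naturality under postcomposition by evaluations) that the paper leaves implicit.
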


\begin{proof}
	First suppose $y \in [[A, C(\mathbb N^\dag, B)]]$ satisfies $y(m) = x_m$ for all $m \in \mathbb N$ and $y(\infty) = x$.  Since $\ev_m \rightarrow \ev_\infty$ in $\Hom(C(\mathbb N^\dag, B), B)$, Lemma~\ref{lem:asymp-convergence} implies that $y(m) \rightarrow y(\infty)$.

	Conversely, suppose $x_m \rightarrow x \in [[A, B]]$.  Fix a shape system $(\underline{A}, \underline{\alpha})$ for $A$.  Represent $x_m$ and $x$ by strong homotopy morphisms $(\underline{\phi}^{(m)}, \underline{h}^{(m)})$ and $(\underline{\phi}, \underline{h})$ from $(\underline A, \underline \alpha)$ to $B$ as in Theorem~\ref{thm:diagram-rep}, where we are regarding $B$ as the limit of a constant inductive system as in the remarks following Theorem~\ref{thm:diagram-rep}.  Write $\phi^{(m)}$ and $\phi$ for the homotopy limits of  $(\underline{\phi}^{(m)}, \underline{h}^{(m)})$ and $(\underline{\phi}, \underline{h})$, respectively. Proposition~\ref{prop:limit-factor} implies $[[\phi_n^{(m)}]] = [[\phi^{(m)} \circ \alpha_{\infty, n}]]$ and $[[\phi_n]] = [[\phi \circ \alpha_{\infty, n}]]$ for all $m,n \in \mathbb N$.  By Corollary~\ref{cor:discrete-factor}, the map
	\[ \alpha_{\infty, n+1}^* \colon [[A, B]] \rightarrow \Homah{A_{n+1}}{B} \]
	factors through a discrete space. Therefore, for all $n \in \mathbb N$, there is an $m_n \in \mathbb N$ such that for all $m > m_n$, we have $[[\phi_{n+1}^{(m)}]] = [[\phi_{n+1}]]$.  The semiprojectivity of $\alpha_n$ and  Lemma~\ref{lem:factor2} imply that $\phi_n^{(m)}$ is homotopic to $\phi_n$ for all $m > m_n$.  Enlarging $m_n$ if necessary, we may assume the sequence $(m_n)_{n=1}^\infty$ is strictly increasing.
	
	For $m, n \in \mathbb N$ with $m > m_n$, we have $\phi_n$ is homotopic to $\phi_n^{(m)}$, and $\phi_n^{(m)}$ is homotopic to $\phi_{n+1}^{(m)} \circ \alpha_n$.  Let $k_n^{(m)} \colon A_n \rightarrow C([0, 1], B)$ be a homotopy from $\phi_n$ to $\phi_{n+1}^{(m)} \circ \alpha_n$.  For $n \in \mathbb N$, define $\Phi_n \colon A_n \rightarrow C(\mathbb N^\dag, B)$ by
	\begin{align*} 
		\Phi_n(a)(m) &= 
		\begin{cases} 
			\phi_n^{(m)}(a), & 1 \leq m \leq m_n \\
			\phi_n(a), & m_n < m \leq \infty
		\end{cases}
	\intertext{for all $a \in A$ and $m \in \mathbb N^\dag$. Further, define $H_n \colon A_n \rightarrow C([0, 1], C(\mathbb N^\dag, B))$ by}
		H_n(a)(s)(m) &= 
		\begin{cases}
			h_n^{(m)}(a)(s), & 1 \leq m \leq m_n \\
			k_n^{(m)}(a)(s), & m_n < m \leq m_{n+1} \\
			h_n(a)(s), & m_{n+1} < m \leq \infty
		\end{cases}
	\end{align*}
	for all $a \in A$, $s \in [0,1]$, and $m \in \mathbb N^\dag$.  Then $(\underline \Phi, \underline H) \colon (\underline A, \alpha) \rightarrow C(\mathbb N^\dag, B)$ defines a strong homotopy morphism.  If $y = \hlim\, (\underline \Phi, \underline H)$, we have $y(m) = x_m$ for $m \in \mathbb N$ and $y(\infty) = x$.
\end{proof}

\subsection{Continuity of composition}\label{sec:top-comp}

We will use the characterization of convergent sequences in Theorem~\ref{thm:asymp-convergence} to strengthen the continuity result from Lemma~\ref{lem:asymp-convergence} to the one in Theorem~\ref{thm:joint-continuity}.  First we record two preliminary results.  

The following proposition is routine.  For $C^*$-algebras $A$ and $D$, we write $A \otimes D$ for the maximal tensor product of $A$ and $D$.

 \begin{proposition}\label{prop:tensor}
	If $A$, $B$, and $D$ are $C^*$-algebras and $\phi \colon A \xrightarrow\approx B$ is an asymptotic morphism, then there is an asymptotic morphism $\mathrm{id}_D \otimes \phi \colon D \otimes A \xrightarrow\approx D \otimes B$, unique up to equivalence, that is determined by
	\begin{equation}
		\lim_{t \rightarrow \infty} \|(\mathrm{id}_D \otimes \phi)_t(d \otimes a) - d \otimes \phi_t(a) \| = 0
	\end{equation}
	for all $a \in A$ and $d \in D$.  Moreover, the assignment $\phi \mapsto \mathrm{id}_D \otimes \phi$ is natural in the sense that if $D_1$ and $D_2$ are $C^*$-algebras and $\theta \colon D_1 \rightarrow D_2$ is a $^*$-homomorphism, then $(\theta \otimes \mathrm{id}_B) \circ (\mathrm{id}_{D_1} \otimes \phi) \cong (\mathrm{id}_{D_2} \otimes \phi) \circ (\theta \otimes \mathrm{id}_{A})$. 
\end{proposition}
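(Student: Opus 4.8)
The plan is to reduce everything to the asymptotic algebra $B_{\rm as} = C_b(\mathbb R_+, B)/C_0(\mathbb R_+, B)$ and the $^*$-homomorphism $\phi_{\rm as} \colon A \to B_{\rm as}$ attached to $\phi$, the point being that although the individual maps $\phi_t$ are unbounded (so $\mathrm{id}_D \otimes \phi_t$ is meaningless on $D \otimes A$), the map $\phi_{\rm as}$ is a genuine $^*$-homomorphism and so can be tensored with $\mathrm{id}_D$. The first step is to construct a canonical $^*$-homomorphism $\mu \colon D \otimes B_{\rm as} \to (D \otimes B)_{\rm as}$. The formula $\kappa(d \otimes f)(t) = d \otimes f(t)$ determines a $^*$-homomorphism $\kappa \colon D \otimes C_b(\mathbb R_+, B) \to C_b(\mathbb R_+, D \otimes B)$, using the universal property of the maximal tensor product and the commuting copies of $D$ and $C_b(\mathbb R_+, B)$ inside the target. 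Since $\|d \otimes f(t)\| \leq \|d\|\,\|f(t)\| \to 0$ whenever $f \in C_0(\mathbb R_+, B)$, the map $\kappa$ carries $D \otimes C_0(\mathbb R_+, B)$ into $C_0(\mathbb R_+, D \otimes B)$, so composing with the quotient $C_b(\mathbb R_+, D \otimes B) \to (D \otimes B)_{\rm as}$ gives a $^*$-homomorphism annihilating $D \otimes C_0(\mathbb R_+, B)$. Writing $q \colon C_b(\mathbb R_+, B) \to B_{\rm as}$ for the quotient map and invoking exactness of the maximal tensor product, the kernel of $\mathrm{id}_D \otimes q$ is exactly $D \otimes C_0(\mathbb R_+, B)$, so the composite descends to the desired $\mu$.

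With $\mu$ available, existence is immediate: the composite $\pi = \mu \circ (\mathrm{id}_D \otimes \phi_{\rm as}) \colon D \otimes A \to (D \otimes B)_{\rm as}$ is a $^*$-homomorphism, and choosing a self-adjoint linear lift $D \otimes A \to C_b(\mathbb R_+, D \otimes B)$ via the correspondence recalled in Section~\ref{sec:prelim-asymp} produces an asymptotic morphism, which I define to be $\mathrm{id}_D \otimes \phi$. To check the defining limit condition I would evaluate $\pi$ on an elementary tensor: since $\phi_{\rm as}(a) = q(t \mapsto \phi_t(a))$, unwinding the definitions of $\kappa$ and $\mu$ gives $\pi(d \otimes a) = [\,t \mapsto d \otimes \phi_t(a)\,]$ in $(D \otimes B)_{\rm as}$, which is precisely the asserted equation $\lim_t \|(\mathrm{id}_D \otimes \phi)_t(d \otimes a) - d \otimes \phi_t(a)\| = 0$.

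Uniqueness and naturality then follow from a single density principle. If $\psi \colon D \otimes A \xrightarrow\approx D \otimes B$ also satisfies the limit condition, then $\psi_{\rm as}$ and $(\mathrm{id}_D \otimes \phi)_{\rm as}$ are $^*$-homomorphisms agreeing on every elementary tensor $d \otimes a$ (both equal $[\,t \mapsto d \otimes \phi_t(a)\,]$); as elementary tensors span a dense $^*$-subalgebra and $^*$-homomorphisms are contractive, the two maps coincide, so $\psi \cong \mathrm{id}_D \otimes \phi$. For naturality, both $(\theta \otimes \mathrm{id}_B) \circ (\mathrm{id}_{D_1} \otimes \phi)$ and $(\mathrm{id}_{D_2} \otimes \phi) \circ (\theta \otimes \mathrm{id}_A)$ are asymptotic morphisms $D_1 \otimes A \xrightarrow\approx D_2 \otimes B$; computing levelwise (composition with the $^*$-homomorphisms $\theta \otimes \mathrm{id}_B$ and $\theta \otimes \mathrm{id}_A$ is literal, by the conventions of Section~\ref{sec:prelim-asymp}) shows that at $d \otimes a$ each is asymptotic to $\theta(d) \otimes \phi_t(a)$, so the uniqueness argument forces them to be equivalent.

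I expect the main obstacle to be the clean construction of $\mu$, and specifically the identification of $\ker(\mathrm{id}_D \otimes q)$ with $D \otimes C_0(\mathbb R_+, B)$; this is exactly where exactness of the maximal tensor product is needed (and the reason the maximal rather than minimal tensor product is used throughout). Once $\mu$ is in place, the remaining steps are routine chases of $\phi_{\rm as}$, $\kappa$, and $\mu$ on elementary tensors together with density, and the only other point requiring a word of justification is the existence of the self-adjoint linear lift used to pass from the $^*$-homomorphism $\pi$ back to an honest asymptotic morphism.
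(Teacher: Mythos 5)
Your proposal is correct and follows essentially the same route as the paper: the same canonical map $D \otimes C_b(\mathbb R_+, B) \to C_b(\mathbb R_+, D \otimes B)$ descended to $D \otimes B_{\rm as} \to (D \otimes B)_{\rm as}$, composition with $\mathrm{id}_D \otimes \phi_{\rm as}$ and lifting for existence, and density of elementary tensors plus (asymptotic) contractivity for uniqueness and naturality. The only difference is that you make explicit two points the paper leaves implicit --- the use of exactness of the maximal tensor product to identify $\ker(\mathrm{id}_D \otimes q)$ in the descent step, and the derivation of naturality from the uniqueness principle --- which is a welcome clarification rather than a new argument.
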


\begin{proof}
	For uniqueness, note that if $\psi, \psi'\colon D \otimes A \xrightarrow\approx D \otimes B$ satisfy
	\[ \lim_{t \rightarrow 0} \|\psi(d \otimes a) - d \otimes \phi(a) \| = \lim_{t \rightarrow 0} \|\psi'(d \otimes a) - d \otimes \phi(a) \| = 0 \]
	for all $a \in A$ and $d \in D$, then $\|\psi_t(c) - \psi'_t(c)\| \rightarrow 0$ for all $c$ in the algebraic tensor product of $D$ and $A$.  Using the asymptotic contractivity of $\psi$ and $\psi'$, this also holds for all $c \in D \otimes A$, so $\psi \cong \psi'$.
	
	For existence, define $\rho \colon D \otimes C_b(\mathbb R_+, B) \rightarrow C_b(\mathbb R_+, D \otimes B)$ by $\rho(d \otimes f)(t) = d \otimes f(t)$.  Then $\rho$ restricts to an isomorphism from $D \otimes C_0(\mathbb R_+, B)$ to $C_0(\mathbb R_+, D \otimes B)$, so $\rho$ induces a $^*$-homomorphism $\bar\rho \colon D \otimes B_{\rm as} \rightarrow (D \otimes B)_{\rm as}$.  Let $\mathrm{id}_D \otimes \phi$ be an asymptotic morphism lifting $\bar\rho \circ (\mathrm{id}_D \otimes \phi_{\rm as})$.  The naturality follows from the naturality of tensor products and $\rho$.
\end{proof}

Specializing to the case when $D$ is commutative gives the following result.  It is not clear to us whether the map $\bar\phi$ below is unique (up to equivalence), but only the existence of such a map $\bar\phi$ will be needed.

\begin{corollary}\label{cor:tensor}
	If $A$ and $B$ are $C^*$-algebras, $X$ is a locally compact Hausdorff space, and $\phi \colon A \xrightarrow\approx B$ is an asymptotic morphism, then there is an asymptotic morphism $\bar\phi \colon C_0(X, A) \xrightarrow\approx C_0(X, B)$ such that $\ev_x \circ \bar\phi \cong \phi \circ \ev_x$ for all $x \in X$.
\end{corollary}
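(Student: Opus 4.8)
The plan is to realize $\bar\phi$ as the asymptotic morphism $\mathrm{id}_{C_0(X)} \otimes \phi$ furnished by Proposition~\ref{prop:tensor}, after identifying the relevant tensor products with algebras of $C_0$-sections. First I would note that, since $C_0(X)$ is commutative and hence nuclear, the maximal tensor products in Proposition~\ref{prop:tensor} agree with the spatial ones, and there are canonical isomorphisms $C_0(X) \otimes A \cong C_0(X, A)$ and $C_0(X) \otimes B \cong C_0(X, B)$ sending an elementary tensor $g \otimes a$ to the section $x \mapsto g(x)a$. Under these identifications I set $\bar\phi = \mathrm{id}_{C_0(X)} \otimes \phi \colon C_0(X, A) \xrightarrow\approx C_0(X, B)$, so that the existence of $\bar\phi$ is immediate from Proposition~\ref{prop:tensor}.

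It then remains to verify compatibility with the evaluation maps, and this is where the naturality clause of Proposition~\ref{prop:tensor} does the work. For a point $x \in X$, let $\epsilon_x \colon C_0(X) \to \mathbb C$ be the character given by evaluation at $x$. Under the identifications above, together with $\mathbb C \otimes A \cong A$ and $\mathbb C \otimes B \cong B$, the $^*$-homomorphism $\epsilon_x \otimes \mathrm{id}_A$ becomes the point evaluation $\ev_x \colon C_0(X, A) \to A$, and likewise $\epsilon_x \otimes \mathrm{id}_B$ becomes $\ev_x \colon C_0(X, B) \to B$. Applying naturality with $D_1 = C_0(X)$, $D_2 = \mathbb C$, and $\theta = \epsilon_x$ yields
\[
  (\epsilon_x \otimes \mathrm{id}_B) \circ (\mathrm{id}_{C_0(X)} \otimes \phi) \cong (\mathrm{id}_{\mathbb C} \otimes \phi) \circ (\epsilon_x \otimes \mathrm{id}_A).
\]
Since $\mathrm{id}_{\mathbb C} \otimes \phi$ is, by its defining property in Proposition~\ref{prop:tensor}, equivalent to $\phi$ under $\mathbb C \otimes A \cong A$ and $\mathbb C \otimes B \cong B$, the right-hand side is $\phi \circ \ev_x$ while the left-hand side is $\ev_x \circ \bar\phi$, giving exactly $\ev_x \circ \bar\phi \cong \phi \circ \ev_x$.

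The only content beyond bookkeeping is checking that the canonical tensor-product isomorphisms intertwine the point evaluations $\ev_x$ with the slice maps $\epsilon_x \otimes \mathrm{id}$; once this is in place the statement is a direct specialization of the naturality already recorded in Proposition~\ref{prop:tensor}. Accordingly, I expect the only (minor) obstacle to be confirming that these identifications are legitimate for the maximal tensor product and that the naturality square is invoked in the correct direction, rather than any genuine analytic difficulty.
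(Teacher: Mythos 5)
Your proposal is correct and follows exactly the paper's own argument: both define $\bar\phi$ as $\mathrm{id}_{C_0(X)} \otimes \phi$ transported through the canonical isomorphisms $C_0(X) \otimes A \cong C_0(X, A)$ and $C_0(X) \otimes B \cong C_0(X, B)$, and then apply the naturality clause of Proposition~\ref{prop:tensor} with $\theta = \ev_x \colon C_0(X) \to \mathbb{C}$. Your additional verification that $\mathrm{id}_{\mathbb{C}} \otimes \phi \cong \phi$ under $\mathbb{C} \otimes A \cong A$ (via the uniqueness clause of Proposition~\ref{prop:tensor}) is a detail the paper leaves implicit, and it is handled correctly.
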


\begin{proof}
Define isomorphisms 
\[ \psi_A \colon C_0(X) \otimes A \rightarrow C_0(X, A)\quad \text{and} \quad \psi_B \colon C_0(X) \otimes B \rightarrow C_0(X, B) \]
by $\psi_A(f \otimes a)(x) = f(x)a$ and $\psi_B(f \otimes b)(x) = f(x)b$ for $f \in C_0(X)$, $a \in A$, $b \in B$, and $x \in X$.  Then define 
\[ \bar\phi = \psi_B \circ (\mathrm{id}_{C_0(X)} \otimes \phi) \circ \psi_A^{-1} \colon C_0(X, A) \xrightarrow\approx C_0(X, B). \]
The result follows from the naturality of the tensor product in Proposition~\ref{prop:tensor} applied to $\theta = \ev_x \colon C_0(X) \rightarrow \mathbb C$.
\end{proof}

Finally, we prove the joint continuity of composition.

\begin{theorem}\label{thm:joint-continuity}
	If $A$, $B$, and $D$ are $C^*$-algebras with $A$ and $B$ separable, then the composition $[[A, B]] \times [[B, D]] \rightarrow [[A, D]]$ is jointly continuous.
\end{theorem}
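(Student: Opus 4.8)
The plan is to reduce to sequential continuity and then manufacture a single witness for Pimsner's condition out of witnesses for the two factors. Since $A$ and $B$ are separable, Theorem~\ref{thm:countability} shows that $[[A,B]]$ and $[[B,D]]$ are first countable, hence so is their product; since a map out of a first countable space is continuous exactly when it is sequentially continuous, it suffices to check sequential continuity of composition. Concretely, given sequences with $x_n\to x$ in $[[A,B]]$ and $y_n\to y$ in $[[B,D]]$, I must produce an element $w\in[[A,C(\mathbb N^\dag,D)]]$ with $w(n)=y_n\circ x_n$ for all $n\in\mathbb N$ and $w(\infty)=y\circ x$, for then Theorem~\ref{thm:asymp-convergence} gives $y_n\circ x_n\to y\circ x$ in $[[A,D]]$.

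First I would invoke Theorem~\ref{thm:asymp-convergence} in each variable to fix $u\in[[A,C(\mathbb N^\dag,B)]]$ with $u(n)=x_n$, $u(\infty)=x$, and $v\in[[B,C(\mathbb N^\dag,D)]]$ with $v(n)=y_n$, $v(\infty)=y$. These cannot be composed directly, since the target of $u$ is $C(\mathbb N^\dag,B)$ rather than $B$. To bridge this, apply Corollary~\ref{cor:tensor} to $v$ with $X=\mathbb N^\dag$, obtaining an asymptotic morphism $\bar v\colon C(\mathbb N^\dag,B)\xrightarrow\approx C(\mathbb N^\dag,C(\mathbb N^\dag,D))$ with $[[\ev_m]]\circ[[\bar v]]=[[v]]\circ[[\ev_m]]$ for every $m\in\mathbb N^\dag$ (outer evaluation on the left, the evaluation $C(\mathbb N^\dag,B)\to B$ on the right). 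Let $\delta\colon C(\mathbb N^\dag,C(\mathbb N^\dag,D))\to C(\mathbb N^\dag,D)$ be the diagonal $^*$-homomorphism $\delta(F)(m)=F(m)(m)$; joint continuity of evaluation on the compact space $\mathbb N^\dag$ makes $\delta$ well defined, and by construction $\ev_m\circ\delta$ is the inner evaluation at $m$ composed with the outer evaluation at $m$.

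Because $B$ is separable and $\mathbb N^\dag$ is compact metrizable, $C(\mathbb N^\dag,B)$ is separable, so the composite $w:=[[\delta]]\circ[[\bar v]]\circ u\in[[A,C(\mathbb N^\dag,D)]]$ is defined by Theorem~\ref{thm:asymptotic-category}. Evaluating at $m\in\mathbb N^\dag$ and chasing the identities above (the two occurrences of $\ev_m$ below being the evaluations $C(\mathbb N^\dag,D)\to D$ and $C(\mathbb N^\dag,B)\to B$, respectively) yields
\[
  w(m)=[[\ev_m]]\circ[[\delta]]\circ[[\bar v]]\circ u=[[\ev_m]]\circ[[v]]\circ[[\ev_m]]\circ u=v(m)\circ u(m),
\]
which equals $y_n\circ x_n$ for $m=n\in\mathbb N$ and $y\circ x$ for $m=\infty$. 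This is exactly the witness required, so Theorem~\ref{thm:asymp-convergence} finishes the argument.

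The main obstacle is the construction in the second paragraph: combining the two one-variable witnesses into one. The key device is to use Corollary~\ref{cor:tensor} to spread $v$ over the compactification into a morphism $\bar v$ out of $C(\mathbb N^\dag,B)$, which matches the target of $u$, and then to collapse the two resulting copies of $\mathbb N^\dag$ along the diagonal $^*$-homomorphism $\delta$. One must also verify that every composition invoked is legitimate, and this is precisely where the separability of both $A$ and $B$ (hence of $C(\mathbb N^\dag,B)$) is essential, as Theorem~\ref{thm:asymptotic-category} requires the domain algebras to be separable.
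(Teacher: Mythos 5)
Your proof is correct and is essentially the paper's own argument: reduce to sequential continuity via first countability, take Pimsner witnesses for both convergent sequences, and use Corollary~\ref{cor:tensor} to spread the second witness over $\mathbb N^\dag$ so it can be composed with the first, then restrict to the diagonal. The only cosmetic difference is that you collapse along the diagonal $^*$-homomorphism $\delta$ to manufacture a single witness $w$ and invoke Theorem~\ref{thm:asymp-convergence}, whereas the paper keeps $z=[[\bar\psi]]\circ[[\phi]]$ in $[[A,C(\mathbb N^\dag\times\mathbb N^\dag,D)]]$ and applies Lemma~\ref{lem:asymp-convergence} to $\ev_{n,n}\to\ev_{\infty,\infty}$; these are equivalent, since that lemma is exactly what proves the ``if'' direction of Theorem~\ref{thm:asymp-convergence}.
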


\begin{proof}
	By Theorem~\ref{thm:countability}, both $[[A, B]]$ and $[[B, D]]$ are first countable, and hence so is $[[A, B]] \times [[B, D]]$.  So it suffices to show composition is sequentially continuous.  Suppose $x_n \rightarrow x$ in $[[A, B]]$ and $y_n \rightarrow y$ in $[[B, D]]$.  By Theorem~\ref{thm:asymp-convergence}, there are asymptotic morphisms $\phi \colon A \xrightarrow\approx C(\mathbb N^\dag, B)$ and $\psi \colon B\xrightarrow\approx C(\mathbb N^\dag, D)$ such that
	\[
		[[\ev_n \circ \phi]] = \begin{cases} x_n & n < \infty \\ x & n = \infty \end{cases} \qquad \text{and} \qquad [[\ev_n \circ \psi]] = \begin{cases} y_n & n < \infty \\ y & n = \infty \end{cases}
	\]
	for all $n \in \mathbb N^\dag$.
	
	After identifying $C(\mathbb N^\dag, C(\mathbb N^\dag, D))$ with $C(\mathbb N^\dag \times \mathbb N^\dag, D)$, Corollary~\ref{cor:tensor} provides an asymptotic morphism $\bar\psi \colon C(\mathbb N^\dag, B) \rightarrow C(\mathbb N^\dag \times \mathbb N^\dag, D)$ such that 
	\[ \ev_{m, n}\circ \bar\psi \cong  \ev_m \circ \psi \circ \ev_n \]
	for all $m, n \in \mathbb N^\dag$.  Let $z = [[\bar\psi]] \circ [[\phi]] \in [[A, C(\mathbb N^\dag\times \mathbb N^\dag, D)]]$.  Then 
	\[ [[\ev_{n, n}]] \circ z = \begin{cases} y_n \circ x_n & n < \infty \\ y \circ x & n = \infty \end{cases} \]
	for $n \in \mathbb N^\dag$.  Since $\ev_{n, n} \rightarrow \ev_{\infty, \infty}$ in $\Hom(C(\mathbb N^\dag \times \mathbb N^\dag, D), D)$, Lemma~\ref{lem:asymp-convergence} implies $y_n \circ x_n \rightarrow y \circ x$.
\end{proof}

\section{Hausdorffized asymptotic morphisms}\label{sec:Hd}

The topology on $[[A, B]]$ given in Definition~\ref{def:top-asymp} is often non-Hausdorff.  In Section~\ref{sec:Hd-cat}, we consider a quotient of $[[A, B]]$ that is Hausdorff and show that the composition descends to a continuous composition on the quotient.  Further properties of the quotient are established in Section~\ref{sec:basic-properties-ash}, and a compatibility result with inductive limits in the spirit of Theorem~\ref{thm:cont-EL} is given in Section~\ref{sec:limits}.

\subsection{The Hausdorffized asymptotic category}\label{sec:Hd-cat}

Every topological space $X$ admits a universal $T_0$ quotient space known as the \emph{Kolmogorov quotient}.  While this is typically non-Hausdorff, we will show that the Kolmogorov quotient of the space $[[A, B]]$ of asymptotic morphisms is always a Hausdorff space (Theorem~\ref{thm:hausdorff}).  Elements of this quotient (defined formally below) will be the morphisms of the Hausdorffized asymptotic category.

\begin{definition}
  \label{def:haus-asym}
	Let $A$ and $B$ be $C^*$-algebras with $A$ separable.  For $x, y \in [[A, B]]$, write $x \sim_{\mathrm{Hd}} y$ in $[[A, B]]$ if the singletons $\{x\}$ and $\{y\}$ have the same closure.  Then $\sim_{\mathrm{Hd}}$ is an equivalence relation.  We define the space of \emph{Hausdorffized asymptotic morphisms}, written $[[A, B]]_{\mathrm{Hd}}$, to be the quotient space $[[A, B]] /\!\sim_{\mathrm{Hd}}$.
\end{definition}

Note that the quotient map $[[A, B]] \rightarrow [[A, B]]_{\mathrm{Hd}}$ induces a bijection on open sets, and in particular, the quotient map is open.  As we now show, this observation implies that the composition of asymptotic morphisms descends to a composition on the quotient spaces.

\begin{proposition}\label{prop:composition}
	If $A$, $B$, and $D$ are $C^*$-algebras such that $A$ and $B$ separable, then the  composition $[[A, B]] \times [[B, D]] \rightarrow [[A, D]]$ induces a continuous map $[[A, B]]_{\mathrm{Hd}} \times [[B, D]]_{\mathrm{Hd}} \rightarrow [[A, D]]_{\mathrm{Hd}}$, written $(x, y) \mapsto y \circ x$.
\end{proposition}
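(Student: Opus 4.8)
The plan is to derive everything from the joint continuity of composition (Theorem~\ref{thm:joint-continuity}) together with the openness of the Kolmogorov quotient maps noted after Definition~\ref{def:haus-asym}. First I would record the standard reformulation of $\sim_{\mathrm{Hd}}$: for points $x$ and $y$ of a space, $\overline{\{x\}} = \overline{\{y\}}$ if and only if $x$ and $y$ are \emph{topologically indistinguishable}, meaning every open set contains one precisely when it contains the other. This holds because $z \in \overline{\{x\}}$ exactly when every open neighborhood of $z$ contains $x$.

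Next I would establish that composition descends to the quotients. Write $c \colon [[A,B]] \times [[B,D]] \to [[A,D]]$ for the composition map $(x,y) \mapsto y \circ x$, and suppose $x \sim_{\mathrm{Hd}} x'$ and $y \sim_{\mathrm{Hd}} y'$. Since the basic open sets of the product have the form $U \times V$, componentwise indistinguishability immediately gives that $(x,y)$ and $(x',y')$ are topologically indistinguishable in $[[A,B]] \times [[B,D]]$. Continuity of $c$ then sends indistinguishable points to indistinguishable points: if $W$ is open with $c(x,y) \in W$, then $(x,y) \in c^{-1}(W)$, which is open and hence also contains $(x',y')$, so $c(x',y') \in W$, and the symmetric argument gives the reverse inclusion. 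Thus $y \circ x \sim_{\mathrm{Hd}} y' \circ x'$, so $c$ factors through a well-defined map $\bar c \colon [[A,B]]_{\mathrm{Hd}} \times [[B,D]]_{\mathrm{Hd}} \to [[A,D]]_{\mathrm{Hd}}$.

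Finally I would prove continuity of $\bar c$ using the quotient maps. Let $q_1, q_2, q_3$ be the respective Kolmogorov quotient maps, so that $\bar c \circ (q_1 \times q_2) = q_3 \circ c$, and the right-hand side is continuous. The maps $q_1$ and $q_2$ are open, and since a product of open surjections is an open surjection onto the product space, $q_1 \times q_2$ is an open continuous surjection and therefore a quotient map; the universal property of this quotient map then forces $\bar c$ to be continuous. The step demanding the most care --- and the sole place where the openness of the quotient maps is genuinely used --- is precisely that $q_1 \times q_2$ is a quotient map: products of quotient maps need not be quotient maps in general, whereas products of open quotient maps are. This is exactly why the bijection-on-open-sets property recorded after Definition~\ref{def:haus-asym} is essential, rather than mere continuity of the quotient maps.
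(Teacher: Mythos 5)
Your proof is correct and takes essentially the same approach as the paper's: well-definedness is deduced from the continuity of composition (Theorem~\ref{thm:joint-continuity}), and continuity of the induced map follows because the open Kolmogorov quotient maps have an open, hence quotient, product. The only cosmetic difference is that the paper phrases the well-definedness step via constant sequences converging both ways rather than via topological indistinguishability; the two formulations are equivalent.
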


\begin{proof}
	Consider the quotient maps
	\[ q \colon [[A, B]] \rightarrow [[A, B]]_{\mathrm{Hd}},\ q' \colon [[B, D]] \rightarrow [[B, D]]_{\mathrm{Hd}},\ \text{and}\ q'' \colon [[A, D]] \rightarrow [[A, D]]_{\mathrm{Hd}}. \]  If $x, y \in [[A, B]]$ and $x', y' \in [[B, D]]$ with $q(x) = q(y)$ and $q'(x') = q'(y')$, then the constant sequences $x$ and $x'$ converge to $y$ and $y'$, respectively.  By the continuity of composition (Theorem~\ref{thm:joint-continuity}), the constant sequence $x' \circ x$ converges to $y' \circ y$.  Similarly, the constant sequence $y' \circ y$ converges to $x' \circ x$.  These convergences imply $q''(x' \circ x) = q''(y' \circ y)$.  Therefore, the composition on the Kolmogorov quotients is well-defined.  Since $q$ and $q'$ are open, so is $q \times q'$.  Hence $q \times q'$ is a quotient map, and the continuity of composition on the Kolmogorov quotients follows.
\end{proof}	

\begin{definition}
  \label{def:hd-asym-cat}
	The \emph{Hausdorffized asymptotic category} $\AMH$ is the category with objects given by separable $C^*$-algebras, the morphisms from $A$ to $B$ given by the set $[[A, B]]_{\mathrm{Hd}}$, and the composition given by Proposition~\ref{prop:composition}.
\end{definition}

In Section~\ref{sec:applications}, we prove that the category $\AMH$ is equivalent to the \emph{shape category} considered in \cite{Dadarlat94} (Theorem~\ref{thm:haus-functor-equiv}).  As a consequence, isomorphism in the category $\AMH$ (and in $\AM$) coincides with shape equivalence of separable $C^*$-algebras.

The following lemma gives a strengthening of Lemma~\ref{lem:factor2}, weakening the hypothesis from agreement in $[[A, B]]$ to agreement in $[[A, B]]_{\mathrm Hd}$.  We record it here for use in Section~\ref{sec:applications}.

\begin{lemma}
	\label{lem:t0-equiv-homotopic}
	Let $A_0$, $A$, and $B$ be $C^*$-algebras with $A_0$ and $A$ separable and let
	$\alpha\colon A_0\to A$ be a semiprojective $^*$-homomorphism.  If
	$\phi, \psi\colon A\to B$ are $^*$-ho\-mo\-mor\-phisms with $[[\phi]]_{\mathrm{Hd}}
	= [[\psi]]_{\mathrm{Hd}}$, then $\phi\circ \alpha$ and $\psi\circ \alpha$
	are homotopic.
\end{lemma}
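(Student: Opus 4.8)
The plan is to reduce to Lemma~\ref{lem:factor2} by factoring $\alpha$ through an intermediate semiprojective map and stripping off the Hausdorffization one stage earlier. Using Lemma~\ref{lem:semiproj-factor}, I would first write $\alpha = \alpha_1 \circ \alpha_0$ for a separable $C^*$-algebra $A_1$ and semiprojective $^*$-homomorphisms $\alpha_0 \colon A_0 \to A_1$ and $\alpha_1 \colon A_1 \to A$. The point of this factorization is that I can spend $\alpha_1$ to convert the hypothesis $[[\phi]]_{\mathrm{Hd}} = [[\psi]]_{\mathrm{Hd}}$ (agreement only after Hausdorffization) into honest agreement in $[[A_1, B]]$, and then spend $\alpha_0$ to invoke Lemma~\ref{lem:factor2} and promote that to a genuine homotopy.

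For the first step, I would apply Corollary~\ref{cor:discrete-factor} to the semiprojective map $\alpha_1$ to obtain a factorization $\alpha_1^* = f \circ g$, where $g \colon [[A, B]] \to X$ is continuous and $X$ is discrete. The key observation is that $[[\phi]]_{\mathrm{Hd}} = [[\psi]]_{\mathrm{Hd}}$ means $\overline{\{[[\phi]]\}} = \overline{\{[[\psi]]\}}$, so $[[\phi]]$ and $[[\psi]]$ are topologically indistinguishable: every open set contains both or neither. Any continuous map into a $T_0$ space---in particular the discrete space $X$---must therefore send $[[\phi]]$ and $[[\psi]]$ to the same point, since otherwise $g$ would pull back a separating open set. (Equivalently, one can argue via first countability, Theorem~\ref{thm:countability}: the constant sequence $[[\psi]]$ converges to $[[\phi]]$, so $g([[\psi]]) \to g([[\phi]])$, and a constant sequence converging in a discrete space equals its limit.) Hence $g([[\phi]]) = g([[\psi]])$, and so
\[ [[\phi \circ \alpha_1]] = \alpha_1^*([[\phi]]) = f(g([[\phi]])) = f(g([[\psi]])) = \alpha_1^*([[\psi]]) = [[\psi \circ \alpha_1]] \]
in $\Homah{A_1}{B} \subseteq [[A_1, B]]$.

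For the second step, I would apply Lemma~\ref{lem:factor2} to the semiprojective $^*$-homomorphism $\alpha_0 \colon A_0 \to A_1$ and the $^*$-homomorphisms $\phi \circ \alpha_1,\, \psi \circ \alpha_1 \colon A_1 \to B$, which agree in $[[A_1, B]]$ by the previous step. This yields that $(\phi \circ \alpha_1) \circ \alpha_0$ and $(\psi \circ \alpha_1) \circ \alpha_0$ are homotopic; since these equal $\phi \circ \alpha$ and $\psi \circ \alpha$ respectively, the proof is complete.

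The main obstacle---and the reason Lemma~\ref{lem:factor2} cannot be applied directly---is the gap between the two notions of sameness: the hypothesis provides only topological indistinguishability in $[[A, B]]$, whereas Lemma~\ref{lem:factor2} requires honest equality of classes. The factorization $\alpha = \alpha_1 \circ \alpha_0$ is precisely what bridges this gap, with the discreteness furnished by Corollary~\ref{cor:discrete-factor} doing the work of collapsing the Hausdorffization after a single application of a semiprojective map.
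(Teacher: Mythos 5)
Your proof is correct, but it follows a genuinely different route from the paper's. The paper also starts from Lemma~\ref{lem:semiproj-factor}, but factors $\alpha$ into \emph{three} semiprojective maps, $\alpha = \alpha_2\circ\alpha_1\circ\alpha_0$, and converts the hypothesis into hard data via Pimsner's condition (Theorem~\ref{thm:asymp-convergence}): from $[[\phi]]_{\mathrm{Hd}}=[[\psi]]_{\mathrm{Hd}}$ it extracts an asymptotic morphism $\eta\colon A\xrightarrow{\approx} C(\mathbb{N}^\dag,B)$ with $[[\ev_n\circ\eta]]=[[\phi]]$ for all $n\in\mathbb{N}$ and $[[\ev_\infty\circ\eta]]=[[\psi]]$; it then replaces $\eta\circ\alpha_2$ by a genuine $^*$-homomorphism $\tilde\eta$ (Lemma~\ref{lem:factor}), applies the quantitative homotopy stability of Corollary~\ref{cor:homotopy-stability} to $\alpha_1$ with a large evaluation index $N$ (so that $\ev_N\circ\tilde\eta$ and $\ev_\infty\circ\tilde\eta$ are $(\mathcal{G},\delta)$-close) to conclude $[[\phi\circ\alpha_2\circ\alpha_1]]=[[\psi\circ\alpha_2\circ\alpha_1]]$, and finishes with Lemma~\ref{lem:factor2} applied to $\alpha_0$, exactly as you do. You replace the middle of that argument by the soft observation that topologically indistinguishable points are identified by every continuous map into a $T_0$ space, fed into the discrete factorization of $\alpha_1^*$ supplied by Corollary~\ref{cor:discrete-factor}; this needs only a two-fold factorization of $\alpha$. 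The substitution is legitimate and non-circular: Corollary~\ref{cor:discrete-factor} is proved in Section~\ref{sec:top-def}, well before the present lemma and independently of it, and the quantitative stability input has not vanished---it is encapsulated in the proof of that corollary (which itself rests on Corollary~\ref{cor:homotopy-stability} and Lemma~\ref{lem:factor2}). What your route buys is brevity and a shorter dependency chain: it bypasses Theorem~\ref{thm:asymp-convergence}, one of the heavier results of Section~\ref{sec:top}, whose own proof already invokes Corollary~\ref{cor:discrete-factor}. What the paper's route offers is mainly expository: it exercises the convergence criterion that drives the rest of the paper and produces the relevant homotopies explicitly. Either argument stands on its own.
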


\begin{proof}
	By Lemma~\ref{lem:semiproj-factor}, there are separable
	$C^*$-algebras $A_1$ and $A_2$ and semiprojective $^*$-homomorphisms
	$\alpha_0\colon A_0\to A_1$, $\alpha_1\colon A_1\to A_2$ and
	$\alpha_2\colon A_2\to A$ such that $\alpha = \alpha_2 \circ
	\alpha_1 \circ \alpha_0$.
	Let $\mathcal{G} \subseteq A_2$ and $\delta > 0$ be given by applying
	Corollary~\ref{cor:homotopy-stability} to $\alpha_1$. 
	
	From $[[\phi]]_{\mathrm{Hd}} = [[\psi]]_{\mathrm{Hd}}$ and
	Theorem~\ref{thm:asymp-convergence}, there is an asymptotic morphism
	$\eta\colon A \xrightarrow{\approx} C(\mathbb{N}^\dag , B)$ with
	$[[\ev_\infty \circ \eta]] = [[\psi]]$ and
	$[[\ev_n \circ \eta]] = [[\phi]]$ for all $n\in
	\mathbb{N}$.
	By Lemma~\ref{lem:factor}, there exists a $^*$-homomorphism
	$\tilde{\eta}\colon A_2\to C(\mathbb{N}^\dag , B)$ with
	$[[\tilde{\eta}]] = [[\eta\circ \alpha_2]]$.  Therefore,
	$[[\ev_\infty\circ \tilde{\eta}]] = [[\psi \circ \alpha_2]]$ and
	$[[\ev_n\circ \tilde{\eta}]] = [[\phi\circ \alpha_2]]$ for all $n\in
	\mathbb{N}$.
	Choose $N\in \mathbb{N}$ so large that $\|(\ev_N \circ \tilde{\eta})(a) -
	(\ev_\infty\circ \tilde{\eta})(a) \| < \delta$ for all $a\in
	\mathcal{G}$.  The choice of $\mathcal{G}$ and $\delta$ implies that
	$\ev_N\circ \tilde{\eta} \circ \alpha_1$ is homotopic to
	$\ev_\infty\circ \tilde{\eta}\circ \alpha_1$.  Therefore,
	$[[\phi\circ \alpha_2 \circ \alpha_1]] = [[\psi\circ \alpha_2 \circ
	\alpha_1]]$.  Finally, Lemma~\ref{lem:factor2} implies that $\phi\circ\alpha_2\circ
	\alpha_1\circ \alpha_0$ is homotopic to $\psi\circ \alpha_2 \circ
	\alpha_1 \circ \alpha_0$.
\end{proof}

\subsection{Basic properties of $[[A,B]]_{\mathrm{Hd}}$}
\label{sec:basic-properties-ash}

Shape systems and strong homotopy morphisms have proved useful in the study of asymptotic morphisms.  They will be equally powerful in the study of the Hausdorffized asymptotic category.  The following gives a formal statement of the claim made in the remarks following Theorem~\ref{thm:haus-shape} that Dadarlat's homotopy limit functor is independent of the choice of homotopies up to Hausdorffization.

\begin{proposition}\label{prop:homotopy-limit}
	Let $(\underline A, \underline \alpha)$ and $(\underline B, \underline \beta)$ be inductive systems of $C^*$-algebras with limits $A$ and $B$ and assume each $A_n$ is separable.  Let 
	\[ (f, \underline \phi, \underline h^\phi), (g, \underline \psi, \underline h^\psi) \colon (\underline A, \underline \alpha) \rightarrow (\underline B, \underline \beta) \]
	be strong homotopy morphisms with homotopy limits $\phi, \psi \colon A \xrightarrow\approx B$.   If $\beta_{\infty, f(n)} \circ \phi_n$ and $\beta_{\infty, g(n)} \circ \psi_n$ are homotopic for all $n \in \mathbb N$, then $[[\phi]]_{\mathrm{Hd}} = [[\psi]]_{\mathrm{Hd}}$.
\end{proposition}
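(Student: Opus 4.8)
The plan is to combine Pimsner's condition (Theorem~\ref{thm:asymp-convergence}) with the explicit interpolation used in its proof. First I would reduce to a constant target system. Set $p_n = \beta_{\infty, f(n)} \circ \phi_n$ and $H_n^\phi = \beta_{\infty, f(n+1)} \circ h_n^\phi$; then $(\underline p, \underline H^\phi)$ is a strong homotopy morphism $(\underline A, \underline \alpha) \xrightarrow\approx B$ into the constant system on $B$, and a direct comparison of the defining formulas in Definition~\ref{def:homotopy-limit} gives $\hlim\,(\underline p, \underline H^\phi) = \phi$ up to equivalence. Defining $q_n$ and $H_n^\psi$ analogously from $(g, \underline \psi, \underline h^\psi)$, the hypothesis becomes that $p_n$ and $q_n$ are homotopic; fix homotopies $k_n \colon A_n \to C([0,1], B)$ from $p_n$ to $q_n$. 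Now $[[\phi]]_{\mathrm{Hd}} = [[\psi]]_{\mathrm{Hd}}$ means $\{[[\phi]]\}$ and $\{[[\psi]]\}$ have the same closure, which holds precisely when the constant sequence at $[[\phi]]$ converges to $[[\psi]]$ and the constant sequence at $[[\psi]]$ converges to $[[\phi]]$. By Theorem~\ref{thm:asymp-convergence} it therefore suffices to produce $y \in [[A, C(\mathbb N^\dag, B)]]$ with $y(\mu) = [[\phi]]$ for all finite $\mu$ and $y(\infty) = [[\psi]]$; the reverse convergence follows by the symmetric construction.

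Next I would build $y$ as a homotopy limit, mirroring the proof of Theorem~\ref{thm:asymp-convergence}. Fix a strictly increasing sequence $m_n \nearrow \infty$ and define $P_n \colon A_n \to C(\mathbb N^\dag, B)$ by $P_n(a)(\mu) = p_n(a)$ for $\mu \le m_n$ and $P_n(a)(\mu) = q_n(a)$ for $m_n < \mu \le \infty$; this is continuous at $\infty$ since it is eventually constant in $\mu$. Let $\ell_n \colon A_n \to C([0,1], B)$ be the concatenation of the reverse of $k_n$ with $H_n^\phi$, which is a homotopy from $q_n$ to $p_{n+1} \circ \alpha_n$, and set
\[
  H_n(a)(s)(\mu) = \begin{cases} H_n^\phi(a)(s) & 1 \le \mu \le m_n \\ \ell_n(a)(s) & m_n < \mu \le m_{n+1} \\ H_n^\psi(a)(s) & m_{n+1} < \mu \le \infty. \end{cases}
\]
Each $H_n(a)(s)$ is eventually constant in $\mu$, hence continuous at $\infty$, and the endpoint conditions $\ev_0 \circ H_n = P_n$ and $\ev_1 \circ H_n = P_{n+1} \circ \alpha_n$ hold region by region, so $(\underline P, \underline H)$ is a strong homotopy morphism. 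I set $y = [[\hlim\,(\underline P, \underline H)]]$.

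Then I would identify the evaluations of $y$. Because $\hlim$ commutes with post-composition by a fixed $^*$-homomorphism on the target (as is visible from the formula in Definition~\ref{def:homotopy-limit}), one has $\ev_\mu \circ \hlim\,(\underline P, \underline H) = \hlim\,(\ev_\mu \circ \underline P, \ev_\mu \circ \underline H)$. For $\mu = \infty$ this strong homotopy morphism is exactly $(\underline q, \underline H^\psi)$, so $y(\infty) = [[\psi]]$. For finite $\mu$, the condition $m_n \nearrow \infty$ ensures $\ev_\mu \circ P_n = p_n$ and $\ev_\mu \circ H_n = H_n^\phi$ for all sufficiently large $n$; since the homotopy limit depends only on the cofinal tail of the system, $\ev_\mu \circ \hlim\,(\underline P, \underline H) \cong \phi$, giving $y(\mu) = [[\phi]]$. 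Theorem~\ref{thm:asymp-convergence} then yields convergence of the constant sequence at $[[\phi]]$ to $[[\psi]]$; interchanging the roles of $(\underline p, \underline H^\phi)$ and $(\underline q, \underline H^\psi)$ gives the reverse convergence, and hence $[[\phi]]_{\mathrm{Hd}} = [[\psi]]_{\mathrm{Hd}}$.

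The main obstacle is this last identification: verifying that $\ev_\mu$ commutes with the homotopy limit and, crucially, that the homotopy limit is insensitive to the finitely many initial indices where $P_n$ and $H_n$ differ from $p_n$ and $H_n^\phi$. This tail-independence is exactly what lets the single morphism $y$ restrict to $\phi$ at every finite coordinate while restricting to $\psi$ at infinity, and it must be checked carefully from the explicit formula for $\hlim$ in Definition~\ref{def:homotopy-limit}.
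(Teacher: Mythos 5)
Your proposal is correct and follows essentially the same route as the paper's proof: reduce to a constant target system, build a strong homotopy morphism into $C(\mathbb N^\dag, B)$ that interpolates between $\phi$ at (eventually all) finite coordinates and $\psi$ at $\infty$ via a concatenated transition homotopy, take its homotopy limit, and apply Theorem~\ref{thm:asymp-convergence} together with symmetry. The paper's construction is your construction with the specific choice $m_n = n-1$, and the tail-independence and evaluation-compatibility facts you flag as needing care are exactly the (routine) verifications the paper leaves implicit.
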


\begin{proof}
	After replacing the $^*$-homomorphisms $\phi_n$ and $\psi_n$ with $\beta_{\infty, f(n)} \circ \phi_n$ and $\beta_{\infty, g(n)}\circ \psi_n$ and the homotopies $h_n^\phi$ and $h_n^\psi$ with $(\mathrm{id}_{C([0,1])} \otimes \beta_{\infty, f(n+1)}) \circ h^\phi_n$ and $(\mathrm{id}_{C([0,1])} \otimes \beta_{\infty, g(n+1)}) \circ h_n^\psi$, we may assume that $B_n = B$ and $\beta_n = \mathrm{id}_B$ for all $n \in \mathbb N$ and $f = g = \mathrm{id}_\mathbb N$.  Hence, after changing notation, we have strong homotopy morphisms $(\underline \phi, \underline h^\phi), (\underline \psi, \underline h^\psi) \colon (\underline A, \underline \alpha) \rightarrow B$ with homotopy limits $\phi, \psi \colon A \xrightarrow\approx B$ such that $\phi_n$ and $\psi_n$ are homotopic for all $n \in \mathbb N$.
	
	For $n \in \mathbb N$, we have $\phi_n$ is homotopic to both $\psi_n$ and $\phi_{n+1} \circ \alpha_n$.  Consider a homotopy $k_n \colon A_n \rightarrow C([0, 1], B)$ from $\psi_n$ to $\phi_{n+1} \circ \alpha_n$.  Define a strong homotopy morphism $(\underline \theta, \underline l) \colon (\underline A, \underline \alpha) \rightarrow C(\mathbb N^\dag, B)$ by
	\begin{align*}
		\theta_n(a)(m) &= 
		\begin{cases}
			\phi_n(a) & m < n \\
			\psi_n(a) & m \geq n
		\end{cases}
	\intertext{and}
		l_n(a)(m)(s) &= 
		\begin{cases} 
			h_n^\phi(a)(s) & m < n \\
			k_n(a)(s) & m = n \\
			h_n^\psi(a)(s) & m > n
		\end{cases}
	\end{align*}
	for all $n \in \mathbb N$, $m \in \mathbb N^\dag$, $a \in A$, and $s \in [0, 1]$.  Let $\theta = \hlim\, (\underline \theta, \underline l) \colon A \xrightarrow{\approx} B$ and note that $[[\mathrm{ev}_m \circ \theta]] = [[\phi]]$ for $m \in \mathbb N$ and $[[\mathrm{ev}_\infty \circ \theta]] = [[\psi]]$.  By Theorem~\ref{thm:asymp-convergence}, it follows that $[[\psi]]$ belongs to the closure of $\{[[\phi]]\}$.  By symmetry, $[[\phi]]$ belongs to the closure of $\{[[\psi]]\}$, and hence $[[\phi]]_{\mathrm{Hd}} = [[\psi]]_{\mathrm{Hd}}$.  
\end{proof}

The following result will be used to produce a projective limit decomposition of the space $[[A, B]]_{\rm Hd}$ in the proof of Theorem~\ref{thm:hausdorff}, which in particular, will be used to prove $[[A, B]]_{\rm Hd}$ is Hausdorff.

\begin{lemma}\label{lem:seq-pt-wise}
	If $A_0$, $A$, and $B$ are $C^*$-algebras with $A_0$ and $A$ separable, \mbox{$\alpha \colon A_0 \rightarrow A$} is a semiprojective $^*$-homomorphism, and $x, y \in [[A,C(\mathbb N^\dag, B)]]$ with \mbox{$x(m) = y(m)$} for all $m \in \mathbb N^\dag$, then $x \circ [[\alpha]] = y \circ [[\alpha]]$.
\end{lemma}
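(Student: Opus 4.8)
The plan is to reduce the statement to one about genuine $^*$-homomorphisms and then construct an explicit homotopy over $\mathbb N^\dag$ whose only nontrivial feature is its behaviour at $\infty$. First I would apply Lemma~\ref{lem:semiproj-factor} to write $\alpha = \alpha_1 \circ \alpha_0$ with $\alpha_0 \colon A_0 \to A_1$ and $\alpha_1 \colon A_1 \to A$ both semiprojective, holding $\alpha_0$ in reserve. Applying Lemma~\ref{lem:factor} to $\alpha_1$ yields $^*$-homomorphisms $u, v \colon A_1 \to C(\mathbb N^\dag, B)$ with $[[u]] = \alpha_1^*(x)$ and $[[v]] = \alpha_1^*(y)$. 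Since $\alpha_1^*(x(m)) = [[\ev_m]] \circ \alpha_1^*(x) = [[\ev_m \circ u]]$ and similarly for $v$, the hypothesis $x(m) = y(m)$ says exactly that $\ev_m \circ u$ and $\ev_m \circ v$ are homotopic $^*$-homomorphisms $A_1 \to B$ for every $m \in \mathbb N^\dag$. As precomposition with the $^*$-homomorphism $\alpha_0$ preserves homotopy classes, $x \circ [[\alpha]] = [[u \circ \alpha_0]]$ and $y \circ [[\alpha]] = [[v \circ \alpha_0]]$; thus it suffices to prove that $u \circ \alpha_0$ and $v \circ \alpha_0$ are homotopic as $^*$-homomorphisms $A_0 \to C(\mathbb N^\dag, B)$.

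Write $u^{(m)} = \ev_m \circ u$ and $v^{(m)} = \ev_m \circ v$, and fix a homotopy $G \colon A_1 \to C([0,1], B)$ from $u^{(\infty)}$ to $v^{(\infty)}$. Identifying $C(\mathbb N^\dag, C([0,1], B))$ with $C([0,1], C(\mathbb N^\dag, B))$, I would build the required homotopy as a family of homotopies $H_m \colon A_0 \to C([0,1], B)$ from $u^{(m)} \circ \alpha_0$ to $v^{(m)} \circ \alpha_0$, one for each $m$. The only constraint is convergence at $\infty$: since $\mathbb N$ is discrete in $\mathbb N^\dag$, there is no compatibility condition among the finite levels, and it is enough to arrange $H_m \to G \circ \alpha_0$ in point-norm. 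For the finitely many $m$ below a cutoff, any homotopy witnessing that $u^{(m)} \circ \alpha_0$ and $v^{(m)} \circ \alpha_0$ are homotopic will serve. For the tail, I would invoke Theorem~\ref{thm:lifting} for $\alpha_0$: given a finite set $\mathcal F \subseteq A_0$ and $\epsilon > 0$ it returns $\mathcal G \subseteq A_1$ and $\delta > 0$, and I apply it to the endpoint-evaluation surjection $q \colon C([0,1], B) \to B \oplus B$ with $\phi = (u^{(\infty)}, v^{(\infty)})$, which lifts to $G$, and $\psi = (u^{(m)}, v^{(m)})$, which is $(\mathcal G, \delta)$-close to $\phi$ for $m$ large (because $u^{(m)} \to u^{(\infty)}$ and $v^{(m)} \to v^{(\infty)}$ in point-norm); the conclusion is a $^*$-homomorphism $H_m \colon A_0 \to C([0,1], B)$ with $q \circ H_m = (u^{(m)} \circ \alpha_0, v^{(m)} \circ \alpha_0)$ and $\|G(\alpha_0(a)) - H_m(a)\| < \epsilon$ for $a \in \mathcal F$. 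A diagonal argument over an exhausting sequence of finite subsets of $A_0$ with $\epsilon \to 0$, together with contractivity of $^*$-homomorphisms, then gives $H_m \to G \circ \alpha_0$ for every $a \in A_0$. The resulting $H$ is continuous at $\infty$, and evaluating the $[0,1]$-coordinate at $0$ and $1$ recovers $u \circ \alpha_0$ and $v \circ \alpha_0$, so $H$ is the desired homotopy.

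The main obstacle is the continuity of $H$ at $\infty$. The pointwise homotopies between $u^{(m)}$ and $v^{(m)}$ come with no control, and homotopies joining nearby endpoints need not themselves be close, so Corollary~\ref{cor:homotopy-stability}---which only guarantees the existence of some homotopy after composing with $\alpha_0$---does not suffice to force convergence. What makes the argument work is the quantitative approximation clause $\|\tilde\phi(\alpha(a)) - \tilde\psi(a)\| < \epsilon$ in Theorem~\ref{thm:lifting}, which lets me choose the tail homotopies uniformly close to the fixed homotopy $G \circ \alpha_0$ and thereby assemble them into a continuous family over $\mathbb N^\dag$. Once $H$ is in hand, $[[u \circ \alpha_0]] = [[v \circ \alpha_0]]$, which is precisely $x \circ [[\alpha]] = y \circ [[\alpha]]$.
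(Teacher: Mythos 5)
There is a genuine gap, and it occurs at the very step your whole construction rests on. You claim that the hypothesis $x(m) = y(m)$ ``says exactly that $\ev_m \circ u$ and $\ev_m \circ v$ are homotopic $^*$-homomorphisms $A_1 \to B$.'' It does not: it says $[[\ev_m \circ u]] = [[\ev_m \circ v]]$ in $[[A_1, B]]$, i.e.\ that these $^*$-homomorphisms are \emph{asymptotically} homotopic, which is in general strictly weaker than being homotopic. (This distinction is precisely why Lemma~\ref{lem:factor2} and Lemma~\ref{lem:t0-equiv-homotopic} exist: converting an asymptotic homotopy of $^*$-homomorphisms into a genuine homotopy costs a composition with a further semiprojective map; without semiprojectivity no such conversion is available.) The gap becomes fatal when you ``fix a homotopy $G \colon A_1 \to C([0,1], B)$ from $u^{(\infty)}$ to $v^{(\infty)}$'': no such $G$ need exist at the level of $A_1$. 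Your tail argument cannot be run one level down either, because Theorem~\ref{thm:lifting} applied to $\alpha_0 \colon A_0 \to A_1$ requires the lift $\tilde\phi = G$ of $(u^{(\infty)}, v^{(\infty)})$ through the endpoint surjection $q$ to be defined on $A_1$, the codomain of $\alpha_0$; Lemma~\ref{lem:factor2} would only hand you a homotopy between $u^{(\infty)} \circ \alpha_0$ and $v^{(\infty)} \circ \alpha_0$, i.e.\ at the level of $A_0$, which is one level too low to serve as that lift.

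This is exactly why the paper factors $\alpha$ through \emph{three} semiprojective maps (two applications of Lemma~\ref{lem:semiproj-factor}), $A_0 \xrightarrow{\alpha_0} A_1 \xrightarrow{\alpha_1} A_2 \xrightarrow{\alpha_2} A$: the top factor $\alpha_2$ is consumed by Lemma~\ref{lem:factor} (replacing $x, y$ by $^*$-homomorphisms $\theta, \rho \colon A_2 \to C(\mathbb N^\dag, B)$), the middle factor $\alpha_1$ is consumed by Lemma~\ref{lem:factor2} (producing, for every $m \in \mathbb N^\dag$, a genuine homotopy $h_m$ between $\ev_m \circ \theta \circ \alpha_1$ and $\ev_m \circ \rho \circ \alpha_1$ at the level of $A_1$; in particular $h_\infty$ plays the role of your $G$), and only the bottom factor $\alpha_0$ is spent on the stability argument over the tail. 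If you insert this extra factorization, the rest of your argument does go through, and your tail construction is then a workable variant of the paper's: you build a separate homotopy $H_m$ for each finite $m$ in the tail, using the quantitative clause of Theorem~\ref{thm:lifting} and a diagonal argument to force $H_m \to G \circ \alpha_0$, whereas the paper applies Corollary~\ref{cor:homotopy-stability} to the tail algebra $D = C(\mathbb N^\dag_{\geq m_0}, B)$ to produce a single homotopy over the entire tail at once (concatenating through the constant-in-$m$ homotopy $h_\infty$), so that continuity at $\infty$ is automatic and no diagonalization is needed. Both tail arguments are sound; the missing third semiprojective factor, and with it the existence of $G$, is the real defect in your write-up.
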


\begin{proof}
	By two applications of Lemma~\ref{lem:semiproj-factor}, we may factor $\alpha$ as a composition
	\[ \begin{tikzcd}
		A_0 \arrow{r}{\alpha_0} & A_1 \arrow{r}{\alpha_1} & A_2 \arrow{r}{\alpha_2} & A 
	\end{tikzcd} \]
	where $A_1$ and $A_2$ are separable $C^*$-algebras and $\alpha_0$, $\alpha_1$, and $\alpha_2$ are semiprojective \mbox{$^*$-homomorphisms}.  Lemma~\ref{lem:factor}, provides $^*$-homomorphisms $\theta, \rho \colon A_2 \rightarrow C(\mathbb N^\dag, B)$ such that $[[\theta]] = x \circ [[\alpha_2]]$ and $[[\rho]] = y \circ [[\alpha_2]]$.  In particular, $[[\ev_m \circ \theta]] = [[\ev_m \circ \rho]]$ for all $m \in \mathbb N^\dag$. Lemma~\ref{lem:factor2} now implies that for all $m \in \mathbb N^\dag$, $\ev_m \circ \theta \circ \alpha_1$ is homotopic to $\ev_m \circ \rho \circ \alpha_1$; let $h_m \colon A_1 \rightarrow C([0, 1], B)$ be such a homotopy.
	
	By, Corollary~\ref{cor:homotopy-stability} there are a finite set $\mathcal G \subseteq A_1$ and $\delta > 0$ such that if $D$ is a $C^*$-algebras and $\phi, \psi \colon A_1 \rightarrow D$ are $^*$-homomorphisms with $\| \phi(a) - \psi(a)\| < \delta$ for all $a \in \mathcal G$, then $\phi \circ \alpha_0$ and $\psi \circ \alpha_0$ are homotopic.  Fix $m_0 \in \mathbb N$ such that
	\[ \| \theta(\alpha_1(a))(m) - h_\infty(a)(0) \| < \delta \qquad \text{and} \qquad \|\rho(\alpha_1(a))(m) - h_\infty(a)(1) \| < \delta \]
	for all $a \in \mathcal G$ and $m \geq m_0$.  Define $D = C(\mathbb N^\dag_{\geq m_0}, B)$ and $\phi, \psi \colon A_1 \rightarrow D$ by $\phi(a)(m) = \theta(\alpha_1(a))(m)$ and $\psi(a)(m) = \theta(\alpha_1(a))(m)$ for all $a \in A_1$ and $m \in \mathbb N^\dag$ with $m \geq m_0$.  By the choice of $\mathcal G$ and $\delta$, $\phi \circ \alpha_0$ and $\psi \circ \alpha_0$ are homotopic; let $k \colon A \rightarrow C([0, 1], D)$ be such a homotopy.  
	
	Finally, define a homotopy $h \colon A_0 \rightarrow C([0, 1], C(\mathbb N^\dag, B))$ by
	\[ h(a)(s)(m) = \begin{cases} h_m(\alpha_0(a))(s) & m < m_0 \\ k(a)(s)(m) & m \geq m_0 \end{cases} \]
	for all $a \in A_0$, $s \in [0, 1]$, and $m \in \mathbb N^\dag$.  Then $h$ defines a homotopy from $\theta \circ \alpha_1 \circ \alpha_0$ to $\rho \circ \alpha_1 \circ \alpha_0$.  Therefore,
	\[ x \circ [[\alpha]] = [[\theta \circ \alpha_1 \circ \alpha_0]] = [[\rho \circ \alpha_1 \circ \alpha_0]] = y \circ [[\alpha]], \]
	as required.
\end{proof}

The following result establishes the main properties of the topology on the space of (Hausdorffized) asymptotic morphisms.

\begin{theorem}\label{thm:hausdorff}
	If $A$ and $B$ are $C^*$-algebras with $A$ separable, then $[[A, B]]_{\mathrm{Hd}}$ is a projective limit of discrete topological spaces.  These  may be taken to be countable if $B$ is separable.  In particular,  $[[A, B]]_{\mathrm{Hd}}$ is totally disconnected and completely metrizable, and it is separable if $B$ is separable.  Moreover, if $(\underline A, \underline \alpha)$ is a shape system for $A$, then the maps $\alpha_{\infty, n}^* \colon [[A, B]] \rightarrow \Homah{A_n}{B}$ of Lemma~\ref{lem:factor} induce a homeomorphism
	\[ \begin{tikzcd}\relax
		[[A, B]]_{\mathrm{Hd}} \arrow{r}{\cong} & \varprojlim\, (\Homah{A_n}{B}, \alpha_n^*). 
	\end{tikzcd} \]
\end{theorem}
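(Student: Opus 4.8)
The plan is to establish the claimed homeomorphism and then read off the topological consequences from standard facts about countable products of discrete spaces. First I would fix a shape system $(\underline A, \underline \alpha)$ for $A$ and use Corollary~\ref{cor:discrete-factor} to factor each map $\alpha_{\infty, n}^* \colon [[A, B]] \to \Homah{A_n}{B}$ through a discrete space $X_n$ (countable when $B$ is separable), say $\alpha_{\infty, n}^* = f_n \circ g_n$ with $g_n \colon [[A, B]] \to X_n$ continuous. By Proposition~\ref{prop:top-shape-system}, the topology on $[[A, B]]$ is the weakest making all the $\alpha_{\infty, n}^*$ continuous, and since each factors through the discrete $X_n$, it is equally the weakest topology making all $g_n$ continuous. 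Thus the combined map $g = (g_n)_n \colon [[A, B]] \to \prod_n X_n$ is a topological embedding onto its image, giving an initial description of $[[A, B]]$ as a subspace of a (countable, when $B$ is separable) product of discrete spaces.

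Next I would identify the equivalence relation $\sim_{\mathrm{Hd}}$ with the relation of having the same image under $g$, i.e.\ $x \sim_{\mathrm{Hd}} y$ iff $g_n(x) = g_n(y)$ for all $n$. The direction that $g_n(x) = g_n(y)$ for all $n$ forces $\{x\}$ and $\{y\}$ to have the same closure is immediate, because the $g_n^{-1}(\text{point})$ sets form a neighborhood basis (as in the proof of Theorem~\ref{thm:countability}); the reverse direction is formal since each $g_n$ is continuous into a $T_1$ space. This shows $[[A, B]]_{\mathrm{Hd}}$ is exactly the image of $g$, so that the induced map $[[A, B]]_{\mathrm{Hd}} \to \varprojlim (\Homah{A_n}{B}, \alpha_n^*)$ is injective and continuous. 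Here I would realize the projective limit concretely: a coherent sequence $([[\psi_n]])_n$ with $\alpha_n^* [[\psi_{n+1}]] = [[\psi_n]]$.

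The main obstacle is surjectivity of the map onto the projective limit, together with the claim that the induced map is a homeomorphism rather than merely a continuous bijection. For surjectivity, given a coherent system $([[\psi_n]])_n \in \varprojlim (\Homah{A_n}{B}, \alpha_n^*)$, I would assemble a strong homotopy morphism: the coherence $\alpha_n^* [[\psi_{n+1}]] = [[\psi_n]]$ means $[[\psi_{n+1} \circ \alpha_n]] = [[\psi_n]]$, and by Lemma~\ref{lem:factor2} (using semiprojectivity of $\alpha_n$) the $^*$-homomorphisms $\psi_{n+1} \circ \alpha_n$ and $\psi_n$ are genuinely homotopic; choosing such homotopies $h_n$ gives data $(\underline \psi, \underline h)$ whose homotopy limit $\phi = \hlim (\underline \psi, \underline h)$ satisfies $\alpha_{\infty, n}^* [[\phi]] = [[\psi_n]]$ by Proposition~\ref{prop:limit-factor}. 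This produces a preimage and simultaneously shows the limit is independent of the choices up to $\sim_{\mathrm{Hd}}$ via Proposition~\ref{prop:homotopy-limit}. For the homeomorphism claim, I would observe that $g$ maps open sets of the basis to relatively open sets of the image (since the $g_n^{-1}$ of points are open and the product topology on $\prod X_n$ restricted to coherent sequences is generated by the coordinate projections), so the continuous bijection is open onto its image.

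Finally, the structural consequences follow from the projective limit description. A projective limit of discrete spaces is totally disconnected; a countable projective limit of countable discrete spaces embeds as a closed subspace of a countable product of countable discrete spaces, hence is completely metrizable (being a closed subset of a Polish space) and separable. The only point requiring care is that the inverse limit is closed in $\prod_n \Homah{A_n}{B}$: this holds because the coherence conditions $\alpha_n^*[[\psi_{n+1}]] = [[\psi_n]]$ are closed conditions, each $\alpha_n^*$ being continuous into the $T_1$ (indeed discrete, after factoring) spaces. Assembling these gives total disconnectedness, complete metrizability, and—when $B$ is separable—separability, completing the proof.
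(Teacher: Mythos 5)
Your skeleton follows the paper's (shape system, discrete factorizations, strong homotopy limits), and your surjectivity step is exactly the paper's argument: a coherent sequence $([[\psi_n]])_n$ yields genuine homotopies via Lemma~\ref{lem:factor2}, hence a strong homotopy morphism whose $\hlim$ hits it by Proposition~\ref{prop:limit-factor}. But there is a genuine gap, and it sits at the heart of the theorem: injectivity of the induced map $[[A,B]]_{\mathrm{Hd}} \to \varprojlim\, (\Homah{A_n}{B}, \alpha_n^*)$ does \emph{not} follow from identifying $[[A,B]]_{\mathrm{Hd}}$ with the image of $g = (g_n)_n$. What you prove is that $\sim_{\mathrm{Hd}}$ coincides with the joint fibers of the $g_n$; injectivity into the projective limit requires that the joint fibers of the maps $\alpha_{\infty,n}^* = f_n \circ g_n$ coincide with those of the $g_n$, and since the $f_n$ need not be injective, the inclusion you have goes the wrong way. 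With Corollary~\ref{cor:discrete-factor} used as a black box (each $\alpha_{\infty,n}^*$ factored through some discrete $X_n$, with no compatibility between different values of $n$), two classes could have equal images under every $\alpha_{\infty,n}^*$ yet distinct images under some $g_n$. This fiber coincidence is precisely what the paper labors to prove: it represents both classes by strong homotopy morphisms (Theorem~\ref{thm:diagram-rep}), uses Proposition~\ref{prop:limit-factor} and Lemma~\ref{lem:factor2} to show the representing $^*$-homomorphisms $\phi_n$, $\psi_n$ are genuinely homotopic, and concludes via Proposition~\ref{prop:homotopy-limit}. Alternatively, the gap can be repaired by opening up the construction: apply Proposition~\ref{prop:discrete-factor} to the connecting maps $\alpha_n \colon A_n \to A_{n+1}$, obtaining $g_{n+1} \colon \Homah{A_{n+1}}{B} \to X_n$ and $f_n \colon X_n \to \Homah{A_n}{B}$ with $f_n \circ g_{n+1} = \alpha_n^*$; then each map $[[A,B]] \to X_n$ factors through $\alpha_{\infty,n+1}^*$ and the two fiber partitions agree. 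Either way, an argument is needed where your proposal asserts a non sequitur.

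The same missing compatibility undermines your last two steps. The openness argument conflates $\prod_n X_n$ (where the image of $g$ lives) with $\prod_n \Homah{A_n}{B}$ (where the projective limit lives); without knowing that $g_n$ factors continuously through $\alpha_{\infty,n+1}^*$, you cannot rewrite the basic open set $g_n^{-1}(\mathrm{pt})$ as $p_{n+1}^{-1}(V) \cap \mathrm{image}$ for a coordinate projection $p_{n+1}$ of the projective limit. And for the structural consequences, the ``only point requiring care'' you flag is the wrong point: the set of coherent sequences is not formally a closed subset of $\prod_n \Homah{A_n}{B}$, because $\Homah{A_n}{B}$ need not be Hausdorff (or even $T_1$) --- agreement of two continuous maps is a closed condition only for Hausdorff targets --- and even if it were closed, $\prod_n \Homah{A_n}{B}$ is not Polish, so closedness there yields neither complete metrizability nor separability. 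The paper instead obtains ``projective limit of discrete spaces'' from the interleaving itself: the zig-zag relations $f_n \circ g_{n+1} = \alpha_n^*$ and $\alpha_{n+1}^* = f_{n+1}\circ g_{n+2}$ identify $\varprojlim\,(\Homah{A_n}{B}, \alpha_n^*)$ with $\varprojlim\,(X_n, g_n \circ f_n)$, and the latter is closed in $\prod_n X_n$ precisely because the $X_n$ are discrete, hence Hausdorff. In short: right skeleton and correct surjectivity, but the injectivity, the homeomorphism, and the Polish space structure all require the compatible (zig-zag) choice of discrete factorizations, which the black-box use of Corollary~\ref{cor:discrete-factor} does not supply.
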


\begin{proof}
	Fix a shape system $(\underline A, \underline \alpha)$ for $A$.  For $n \in \mathbb N$,  Proposition~\ref{prop:discrete-factor} gives a discrete space $X_n$ (which we may take to be countable if $B$ is separable) and continuous maps $g_{n+1} \colon \Homah{A_{n+1}}{B} \rightarrow X_n$ and $f_n \colon X_n \rightarrow \Homah{A_n}{B}$ such that for all $^*$-homomorphisms $\phi \colon A_{n+1} \rightarrow B$, $f_n(g_{n+1}([[\phi]])) = [[\phi \circ \alpha_n]]$.  Then there is a commuting diagram
	\[ \begin{tikzcd}[column sep = small]
		 \cdots \arrow{rr}{\alpha_3^*} \arrow{rd}{g_4} & &  \Homah{A_3}{B} \arrow{rr}{\alpha_2^*} \arrow{rd}{g_3} & & \Homah{A_2}{B} \arrow{rr}{\alpha_1^*} \arrow{rd}{g_2} & & \Homah{A_1}{B} \\
		  \cdots \ar[r] & X_3 \arrow{rr}[swap]{g_3 \circ f_3} \arrow{ru}[swap]{f_3} & &  X_2 \arrow{rr}[swap]{g_2 \circ f_2} \arrow{ru}[swap]{f_2} & &  X_1 \arrow{ru}[swap]{f_1} &
	\end{tikzcd} \]
	in the category of topological spaces.  It follows that
	\[ \varprojlim\, (\Homah{A_n}{B}, \alpha_n^*) \cong \varprojlim\, (X_n, g_n \circ f_n).\]
	Therefore, we only need to prove the final part of the theorem.
	
	The maps $\alpha_{\infty, n}^* \colon [[A, B]] \rightarrow \Homah{A_n}{B}$ induce a continuous map
	\[ \alpha^* \colon [[A, B]] \rightarrow \varprojlim\, (\Homah{A_n}{B}, \alpha_n^*), \]
	and, since the inverse limit is Hausdorff (and, in particular, $T_0$), $\alpha^*$ factors through a continuous map
	\[ \bar\alpha^* \colon [[A, B]]_{\mathrm{Hd}} \rightarrow \varprojlim\, (\Homah{A_n}{B}, \alpha_n^*). \] 
	We will show that $\bar\alpha^*$ is a homeomorphism.
	
	First we prove that $\bar\alpha^*$ is injective.  Suppose that $\phi, \psi \colon A \xrightarrow\approx B$ are asymptotic morphisms with $\alpha^*([[\phi]]) = \alpha^*([[\psi]])$.  By Theorem~\ref{thm:diagram-rep}, there are strong homotopy morphisms $(\underline \phi, \underline h), (\underline \psi, \underline k) \colon (\underline A, \underline \alpha) \rightarrow B$ with homotopy limits $\phi$ and $\psi$, respectively.  For each $n \in \mathbb N$,
	\[ [[\phi_{n+1}]] = [[\phi \circ \alpha_{\infty, n+1}]] = [[\psi \circ \alpha_{\infty, n+1}]] = [[\psi_{n+1}]], \]
	where the middle inequality follows from $\alpha^*([[\phi]]) = \alpha^*([[\psi]])$ and the outer two equalities follow from Proposition~\ref{prop:limit-factor}.  By the definition of a strong homotopy morphism of inductive systems, $\phi_{n+1} \circ \alpha_n$ and $\psi_{n+1} \circ \alpha_n$ are homotopic to $\phi_n$ and $\psi_n$, respectively.  Also, by Lemma~\ref{lem:factor2}, $\phi_{n+1} \circ \alpha_n$ and $\psi_{n+1} \circ \alpha_n$ are homotopic.  Concatenating these homotopies shows that $\phi_n$ and $\psi_n$ are homotopic.  Then Proposition~\ref{prop:homotopy-limit} implies $[[\phi]]_{\mathrm{Hd}} = [[\psi]]_{\mathrm{Hd}}$, so $\bar\alpha^*$ is injective.  
		
	Second, we prove surjectivity. Let $(\phi_n \colon A_n \rightarrow B)_{n=1}^\infty$ be a sequence of $^*$-ho\-mo\-mor\-phisms such that $[[\phi_n]] = [[\phi_{n+1} \circ \alpha_n]]$ for all $n \in \mathbb N$.  For $n \geq N$, Lemma~\ref{lem:factor2} implies $\phi_{n+1} \circ \alpha_n$ is homotopic to $\phi_{n+2} \circ \alpha_{n+2, n}$.  If $h_n \colon A_n \rightarrow C([0, 1],B)$ denotes such a homotopy, then the sequences $(\phi_{n+1} \circ \alpha_n)_{n=1}^\infty$ and $(h_n)_{n=1}^\infty$ form a strong homotopy morphism $(\underline A, \underline \alpha) \rightarrow B$.  If $\phi \colon A \xrightarrow\approx B$ denotes the homotopy limit of these sequences, then Proposition~\ref{prop:limit-factor} implies $[[\phi \circ \alpha_{\infty, n}]] = [[\phi_{n+1} \circ \alpha_n]] = [[\phi_n]]$ for all $n \in \mathbb N$.  So $\alpha^*$ is surjective, and hence so is $\bar\alpha^*$.
	
	It remains to prove continuity of the inverse of $\bar\alpha^*$.  To this end, consider $x \in [[A, B]]$ and a sequence $(x_m)_{m=1}^\infty \subseteq [[A, B]]$ such that $\alpha_{\infty, n}^*(x_m) \rightarrow \alpha_{\infty, n}^*(x)$ in $\Homah{A_n}{B}$ for all $n \in \mathbb N$.  By Proposition~\ref{prop:inclusion-continuous}, $\alpha_{\infty, n}^*(x_m) \rightarrow \alpha_{\infty, n}^*(x)$ in $[[A_n, B]]$ for all $n \in \mathbb N$. 
	Then by Theorem~\ref{thm:asymp-convergence}, there is a $y_{n+1}' \in [[A_{n+1}, C(\mathbb N^\dag, B)]]$ such that $y_{n+1}'(m) = \alpha_{\infty, n+1}^*(x_m)$ for $m \in \mathbb N$ and $y_{n+1}'(\infty) = \alpha_{\infty, n+1}^*(x)$.  Define $y_n = y_{n+1}' \circ [[\alpha_n]]$ for $n \in \mathbb N$.  
	Note that $y_{n+1}'(m) = y_{n+2}'(m) \circ [[\alpha_{n+1}]]$ for all $m \in \mathbb N$, so Lemma~\ref{lem:seq-pt-wise} implies $y_n = y_{n+1} \circ [[\alpha_n]]$.  Also, Lemma~\ref{lem:factor} implies $y_n \in \Homah{A_n}{C(\mathbb N^\dag, B)} \subseteq [[A_n, C(\mathbb N^\dag, B)]]$ for all $n \in \mathbb N$.  The surjectivity of $\alpha^*$, applied with $C(\mathbb N^\dag, B)$ in place of $B$, implies there is a $y \in [[A, C(\mathbb N^\dag, B)]]$ such that $\alpha_{\infty, n}^*(y) = y_n$ for all $n \in \mathbb N$.  For $m \in \mathbb N^\dag$ and $n \in \mathbb N$, we have $\alpha_{\infty, n}^*(y(m)) = y_n(m)$.  So the injectivity of $\bar\alpha^*$ implies $y(m) \sim_{\mathrm{Hd}} x_m$ for all $m \in \mathbb N$ and $y(\infty) \sim_{\mathrm{Hd}} x$.  Therefore, $x_n \rightarrow x$ in $[[A, B]]_{\mathrm{Hd}}$.  Hence $\bar\alpha^*$ is a homeomorphism.
\end{proof}

\subsection{Compatibility with direct limits}\label{sec:limits}

The remainder of the section is devoted to proving the continuity of the functor $[[\,\cdot\,,B]]_{\mathrm{Hd}}$ on separable $C^*$-algebras (Theorem~\ref{thm:t0-continuity}).  When restricting to shape systems, this continuity result follows easily from the projective limit decomposition in Theorem~\ref{thm:hausdorff} and the factorization result in Lemma~\ref{lem:factor}.  The general case will reduce to this case by approximating a given inductive system by a shape system with the same limit---the precise statement needed is given in Lemma~\ref{lem:double-lim}.  We start with the following approximation lemma, which is a typical application of semiprojectivity.   

\begin{lemma}\label{lem:shape-map}
	Suppose $(\underline A, \underline \alpha)$ is a shape system with limit $A$ and $(\underline B, \underline \beta)$ is an inductive system with limit $B$ such that each $\beta_n$ is surjective.  For $n \in \mathbb N$, let $\mathcal F_n \subseteq A_n$ be a finite set and let $\epsilon_n > 0$. If $\phi \colon A \rightarrow B$ is a $^*$-homomorphism, then there are a strictly increasing function $f \colon \mathbb N \rightarrow \mathbb N$ and $^*$-homomorphisms $\phi_n \colon A_n \rightarrow B_{f(n)}$ such that the diagram
	\[\begin{tikzcd}[column sep = large]
		A_1 \arrow{r}{\alpha_1} \arrow{d}[swap]{\phi_1} & A_2 \arrow{r}{\alpha_2} \arrow{d}[swap]{\phi_2} & A_3 \arrow{r}{\alpha_3} \arrow{d}[swap]{\phi_3} & \cdots &[-40pt] A \arrow{d}{\phi} \\
		B_{f(1)} \arrow{r}[swap]{\beta_{f(2),f(1)}} & B_{f(2)} \arrow{r}[swap]{\beta_{f(3), f(2)}} & B_{f(3)} \arrow{r}[swap]{\beta_{f(4), f(3)}} & \cdots & B
	\end{tikzcd} \]
	commutes up to homotopy,
	\[ \max_{a \in \mathcal F_n}\, \|\beta_{f(n+1), f(n)}(\phi_n(a)) - \phi_{n+1}(\alpha_n(a)) \| < \epsilon_n, \]
	and $\phi \circ \alpha_{\infty, n} = \beta_{\infty, f(n)} \circ \phi_n$ for all $n \in \mathbb N$.
\end{lemma}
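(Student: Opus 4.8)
The plan is to build the homotopy‑commutative ladder recursively, first producing exact finite‑stage lifts of $\phi$ through the system $(\underline B, \underline \beta)$ and then assembling them so the squares commute up to homotopy. Since each $\alpha_m \colon A_m \to A_{m+1}$ is semiprojective and each $\beta_n$ is surjective, applying the definition of semiprojectivity of $\alpha_m$ to the $^*$-homomorphism $\phi \circ \alpha_{\infty, m+1} \colon A_{m+1} \to B = \varinjlim (\underline B, \underline \beta)$ yields an integer $\ell_m$ and a $^*$-homomorphism $L_m \colon A_m \to B_{\ell_m}$ with $\beta_{\infty, \ell_m} \circ L_m = \phi \circ \alpha_{\infty, m+1} \circ \alpha_m = \phi \circ \alpha_{\infty, m}$. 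The central device is then to define $\phi_n$ not as $L_n$ itself but as a pushforward of $L_{n+1} \circ \alpha_n$: concretely, $\phi_n = \beta_{f(n), \ell_{n+1}} \circ L_{n+1} \circ \alpha_n$ for a strictly increasing $f$ with $f(n) \geq \ell_{n+1}$ to be chosen below. This makes the exact right‑hand commutativity automatic, since $\beta_{\infty, f(n)} \circ \phi_n = \beta_{\infty, \ell_{n+1}} \circ L_{n+1} \circ \alpha_n = \phi \circ \alpha_{\infty, n}$.

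The point of this particular choice is that it forces \emph{both} legs of the $n$-th square to factor through $\alpha_n$. Indeed, writing $w_1 = \beta_{f(n+1), \ell_{n+1}} \circ L_{n+1}$ and $w_2 = \beta_{f(n+1), \ell_{n+2}} \circ L_{n+2} \circ \alpha_{n+1}$, both maps $A_{n+1} \to B_{f(n+1)}$, one checks $\beta_{f(n+1), f(n)} \circ \phi_n = w_1 \circ \alpha_n$ and $\phi_{n+1} \circ \alpha_n = w_2 \circ \alpha_n$. Moreover $\beta_{\infty, f(n+1)} \circ w_1 = \beta_{\infty, f(n+1)} \circ w_2 = \phi \circ \alpha_{\infty, n+1}$, so $w_1$ and $w_2$ become equal in the limit; consequently, for each fixed element of $A_{n+1}$, the quantity $\|w_1(a) - w_2(a)\|$ tends to $0$ as $f(n+1) \to \infty$. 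The plan is then to apply Corollary~\ref{cor:homotopy-stability} to the semiprojective map $\alpha_n$, obtaining a finite set $\mathcal G_n \subseteq A_{n+1}$ and $\delta_n > 0$, and to choose $f(n+1)$ large enough that $w_1$ and $w_2$ lie within $\min(\delta_n, \epsilon_n)$ of each other on the finite set $\mathcal G_n \cup \alpha_n(\mathcal F_n)$. Closeness on $\mathcal G_n$ then yields, via Corollary~\ref{cor:homotopy-stability}, a homotopy between $w_1 \circ \alpha_n = \beta_{f(n+1), f(n)} \circ \phi_n$ and $w_2 \circ \alpha_n = \phi_{n+1} \circ \alpha_n$, giving homotopy‑commutativity of the square, while closeness on $\alpha_n(\mathcal F_n)$ gives the displayed estimate on $\mathcal F_n$.

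It remains to run the recursion so that all lower bounds on $f$ hold simultaneously: at stage $n+1$ one takes $f(n+1)$ to exceed $f(n)$ and $\ell_{n+2}$ (the latter so that $\phi_{n+1}$ is defined) and to be large enough for the closeness required by the $n$-th square. Each of these is a ``for all sufficiently large $f(n+1)$'' condition—the closeness being available precisely because $w_1$ and $w_2$ agree in the limit—so a strictly increasing $f$ with the desired properties exists.

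I expect the main obstacle to be the homotopy‑commutativity of the squares. The naive strategy of setting $\phi_n = L_n$, a lift of $\phi \circ \alpha_{\infty, n}$ living on $A_n$, fails here: mere agreement of two maps out of $A_n$ in the limit does not produce a finite‑stage homotopy, since $A_n$ need not be semiprojective, and Corollary~\ref{cor:homotopy-stability} only delivers homotopies \emph{after} precomposition with a semiprojective map. Replacing $L_n$ by $L_{n+1} \circ \alpha_n$ is exactly what repositions the comparison to be between two maps out of $A_{n+1}$ precomposed with the semiprojective map $\alpha_n$, which is the one configuration the homotopy‑stability result can handle.
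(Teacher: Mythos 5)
Your proposal is correct and takes essentially the same approach as the paper's proof: both use semiprojectivity to produce exact finite-stage lifts of $\phi \circ \alpha_{\infty, n+1}$ defined on $A_{n+1}$, push them forward along $(\underline B, \underline \beta)$ until consecutive lifts are close on a prescribed finite set (possible because they agree in the limit), and then define $\phi_n$ by precomposing a lift with $\alpha_n$ so that Corollary~\ref{cor:homotopy-stability} converts this closeness into homotopy-commutativity of the squares while the identity $\beta_{\infty, f(n)} \circ \phi_n = \phi \circ \alpha_{\infty, n}$ holds exactly. The only difference is organizational: the paper recursively renames each pushed-forward lift (setting $\psi_{n+2} = \beta_{f(n+1), f'(n+1)} \circ \psi'_{n+2}$), whereas you keep the raw lifts $L_m$ fixed once and for all and absorb every pushforward into the choice of the strictly increasing function $f$.
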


\begin{proof}
	For $n \in \mathbb N$, Corollary~\ref{cor:homotopy-stability} and the semiprojectivity of $\alpha_n$ imply there are a finite set $\mathcal G_{n+1} \subseteq A_{n+1}$ and $\delta_{n+1} > 0$ such that for all $C^*$-algebras $D$ and $^*$-ho\-mo\-mor\-phisms $\theta, \rho \colon A_{n+1} \rightarrow D$, if $\|\theta(a) - \rho(a)\|< \delta_{n+1}$ for all $a \in \mathcal G_{n+1}$, then $\theta \circ \alpha_n$ is homotopic to $\rho\circ \alpha_n$.  By enlarging the sets $\mathcal G_n$ and decreasing $\delta_n$, we may assume $\alpha_n(\mathcal F_n) \subseteq \mathcal G_{n+1}$ and $\delta_{n+1} < \epsilon_n$ for all $n \in \mathbb N$.
	
	It suffices to construct a strictly increasing function $f \colon \mathbb N \rightarrow \mathbb N$ and $^*$-ho\-mo\-morphisms $\psi_{n+1} \colon A_{n+1} \rightarrow B_{f(n)}$ such that 
	\[ \max_{a \in \mathcal G_{n+1}}\, \|\beta_{f(n+1), f(n)}(\psi_{n+1}(a)) - \psi_{n+2}(\alpha_{n+1}(a))\| < \delta_{n+1} \] 
	and $\phi \circ \alpha_{\infty, n+1} = \beta_{\infty, f(n)} \circ \psi_{n+1}$.  Indeed, the $^*$-homomorphisms $\phi_n = \psi_{n+1} \circ \alpha_n$ will satisfy the conditions of the lemma.
	
	Since $\alpha_{\infty, 2} \colon A_2 \rightarrow B$ is semiprojective, there are $f(1) \in \mathbb N$ and a $^*$-homomorphism $\psi_2 \colon A_2 \rightarrow B_{f(1)}$ such that $\beta_{\infty, f(1)} \circ \psi_2 = \phi \circ \alpha_{\infty, 2}$.  Assume $f(1), \ldots, f(n)$ and $\psi_2, \ldots, \psi_{n+1}$ have been constructed.  Because $\alpha_{\infty, n+2}$ is semiprojective, there are $f'(n+1) \in \mathbb N$ and a $^*$-homomorphism $\psi'_{n+2}\colon A_{n+2} \rightarrow B_{f'(n+1)}$ so that 
	\[
		\beta_{\infty, f'(n+1)} \circ \psi'_{n+2} = \phi \circ \alpha_{\infty, n+2}. 
	\]
		  Then $\beta_{\infty, f(n)} \circ \psi_{n+1} = \beta_{\infty, f'(n+1)} \circ \psi_{n+2}' \circ \alpha_{n+1}$, and hence
	\[ \lim_{m \rightarrow \infty} \| \beta_{m, f(n)}( \psi_{n+1}(a)) - \beta_{m, f'(n+1)}(\psi'_{n+2}(\alpha_{n+1}(a))) \| = 0. \]
	Therefore, we may find $f(n+1) \in \mathbb N$ with $f(n+1) > \max\{f(n), f'(n+1) \}$ such that 
	\[ \max_{a \in \mathcal G_{n+1}}\, \| \beta_{f(n+1), f(n)}( \psi_{n+1}(a)) - \beta_{f(n+1), f'(n+1)}(\psi'_{n+2}(\alpha_n(a))) \| < \delta_{n+1}. \]
	Define $\psi_{n+2} = \beta_{f(n+1), f'(n+1)} \circ \psi'_{n+2}$.
\end{proof}

Applying the previous lemma inductively (and taking care with the estimates) yields the following approximation result.  This allows us to replace a given inductive system with a shape system with the same limit.

\begin{lemma}\label{lem:double-lim}
	If $(\underline A, \underline \alpha)$ is an inductive system of separable $C^*$-algebras with limit $A$, then there is a diagram
		\[ \begin{tikzcd}[row sep = 30pt, column sep = 35pt]
		A_1^1 \arrow{r}{\alpha_1^1}\arrow{d}[swap]{\beta_1^1} \arrow{dr}[pos = .4]{\gamma_1} & A_2^1 \arrow{r}{\alpha_2^1}\arrow{d}[swap]{\beta_2^1} & A_3^1 \arrow{r}{\alpha_3^1}\arrow{d}[swap]{\beta_3^1} & \cdots &[-38pt] \\
		A_1^2\arrow{r}{\alpha_1^2}\arrow{d}[swap]{\beta_1^2} & A_2^2 \arrow{r}{\alpha_2^2}\arrow{d}[swap]{\beta_2^2}\arrow{dr}[pos = .4]{\gamma_2} & A_3^2\arrow{r}{\alpha_3^2}\arrow{d}[swap]{\beta_3^2} & \cdots & \\
		A_1^3\arrow{r}{\alpha_1^3}\arrow{d}[swap]{\beta_1^3} & A_2^3 \arrow{r}{\alpha_2^3}\arrow{d}[swap]{\beta_2^3} & A_3^3\arrow{r}{\alpha_3^3}\arrow{d}[swap]{\beta_3^3}\arrow{dr}[pos = .4]{\gamma_3} & \cdots & \\
		\phantom{A_1^4} \arrow[phantom]{d}[pos = -1.5, description]{\rotatebox{90}{.\,.\,.\,}} & \phantom{A_2^4} \arrow[phantom]{d}[pos = -1.5, description]{\rotatebox{90}{.\,.\,.\,}} & \phantom{A_3^4} \arrow[phantom]{d}[pos = -1.5, description]{\rotatebox{90}{.\,.\,.\,}} & \phantom{A_4^4} \arrow[phantom]{dr}[pos = -1.5, description]{\rotatebox{-45}{.\,.\,.\,}} \\[-25pt]
		A_1 \arrow{r}{\alpha_1} & A_2 \arrow{r}{\alpha_2} & A_3 \arrow{r}{\alpha_3} & \cdots & A
	\end{tikzcd} \]
	of $C^*$-algebras and $^*$-homomorphisms that commutes up to homotopy and satisfies the following conditions:
	\begin{enumerate}
		\item\label{double-lim:col} for all $n \in \mathbb N$, the $n$th column $\big((A_n^m, \beta_n^m)\big)_{m=1}^\infty$ is a shape system for $A_n$;
		\item\label{double-lim:diag} the diagonal $\big((A_n^n, \gamma_n)\big)_{n=1}^\infty$ is a shape system for $A$;
		\item\label{double-lim:rectangle} for all $n, m \in \mathbb N$, $\alpha_n \circ \beta_n^{\infty, m} = \beta_{n+1}^{\infty, m} \circ \alpha_n^m $; 
		\item\label{double-lim:triangle} for all $n \in \mathbb N$, $\gamma_{\infty, n} = \alpha_{\infty, n} \circ \beta_n^{\infty, n}$.
	\end{enumerate}
\end{lemma}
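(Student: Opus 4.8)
The plan is to build the grid recursively, one column at a time, using the approximation result of Lemma~\ref{lem:shape-map} to manufacture the horizontal maps realizing the $\alpha_n$. First I would invoke \cite[Theorem~4.3]{Blackadar85} to fix, for the first column, a shape system $(A_1^m, \beta_1^m)_m$ for $A_1$ with each $\beta_1^m$ surjective. Inductively, suppose the $n$th column $(A_n^m, \beta_n^m)_m$ (a shape system for $A_n$ with surjective connecting maps) has been constructed. I would then apply Lemma~\ref{lem:shape-map} with this column as the source shape system, a shape system for $A_{n+1}$ with surjective maps as the target, and $\phi = \alpha_n \colon A_n \to A_{n+1}$. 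This produces a strictly increasing reindexing $f$ and $^*$-homomorphisms into the $f(m)$th levels of the target that commute exactly with the column-limit maps, homotopy-commute at finite stages, and satisfy the approximate-commutation estimates of the lemma.

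The reindexing $f$ is then absorbed by declaring the $(n+1)$st column to be the cofinal subsystem of the target obtained by keeping only the levels $f(1) < f(2) < \cdots$ (the connecting maps being the appropriate composites, which remain surjective and semiprojective). With this bookkeeping the horizontal maps become honest maps $\alpha_n^m \colon A_n^m \to A_{n+1}^m$, and the exact limit-commutation supplied by Lemma~\ref{lem:shape-map} is precisely condition~\ref{double-lim:rectangle}, namely $\alpha_n \circ \beta_n^{\infty, m} = \beta_{n+1}^{\infty, m} \circ \alpha_n^m$. I would set $\gamma_n = \beta_{n+1}^n \circ \alpha_n^n \colon A_n^n \to A_{n+1}^{n+1}$; since $\beta_{n+1}^n$ is semiprojective and composites involving a semiprojective map are semiprojective, each $\gamma_n$ is semiprojective, so the diagonal is automatically a system of semiprojective maps. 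Defining $\mu_n = \alpha_{\infty, n} \circ \beta_n^{\infty, n} \colon A_n^n \to A$, a short computation using condition~\ref{double-lim:rectangle} together with $\beta_{n+1}^{\infty, n+1} \circ \beta_{n+1}^n = \beta_{n+1}^{\infty, n}$ and $\alpha_{\infty, n+1} \circ \alpha_n = \alpha_{\infty, n}$ gives $\mu_{n+1} \circ \gamma_n = \mu_n$, so the $\mu_n$ induce a $^*$-homomorphism $\mu \colon \varinjlim (A_n^n, \gamma_n) \to A$. Condition~\ref{double-lim:triangle} is then exactly the assertion that, under the identification $\mu$, the canonical map $\gamma_{\infty, n}$ equals $\mu_n$; thus once $\mu$ is shown to be an isomorphism, both \ref{double-lim:diag} and \ref{double-lim:triangle} follow at once.

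The main obstacle is showing that $\mu$ is an isomorphism, i.e.\ that the diagonal already exhausts $A$, and this is where the estimates must be handled with care. For density I would fix a dense sequence $(a_j)$ in $A$ with $a_j = \alpha_{\infty, k_j}(b_j)$ for some $b_j \in A_{k_j}$, and, at the recursive step producing the $(n+1)$st column, perform one further cofinal reindexing so that its $(n+1)$st level approximates the finite set $\{\alpha_{n+1, k_j}(b_j) : j \le n+1,\ k_j \le n+1\}$ to within $2^{-(n+1)}$ (possible since the column's levels have dense union in $A_{n+1}$). Since $\mu_{n+1}(A_{n+1}^{n+1}) = \alpha_{\infty, n+1}(\beta_{n+1}^{\infty, n+1}(A_{n+1}^{n+1}))$, this forces $\bigcup_n \mu_n(A_n^n)$ to be dense in $A$. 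The more delicate half is the identification on the nose: I would ensure at the outset that the error bounds $\epsilon_n$ fed into Lemma~\ref{lem:shape-map} are summable, so that for fixed $A_n^m$ the rightward-then-downward composites $A_n^m \to A_k^k$ converge as $k \to \infty$ and assemble (over $m$, then over $n$) into maps $A_n \to \varinjlim(A_k^k, \gamma_k)$ inverse to $\mu$ in the limit; this is the standard cofinality argument for a double sequence, and summability of the $\epsilon_n$ is precisely what makes the diagonal cofinal in both directions and the identification isometric. Finally, the homotopy-commutativity of the whole diagram is inherited directly from the ``commutes up to homotopy'' conclusion of Lemma~\ref{lem:shape-map} at each recursive step, the triangles involving $\gamma_n$ commuting strictly by the definition of $\gamma_n$.
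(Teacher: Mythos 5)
Your construction phase matches the paper's almost exactly: build the grid column by column, take the first column to be a shape system for $A_1$ with surjective connecting maps, apply Lemma~\ref{lem:shape-map} at each step with $\phi = \alpha_n$, absorb the reindexing $f$ by passing to a cofinal subsystem of the target shape system, set $\gamma_n = \beta_{n+1}^n \circ \alpha_n^n$ (semiprojective because $\beta_{n+1}^n$ is), and reduce \ref{double-lim:diag} and \ref{double-lim:triangle} to showing that the induced map $\mu \colon \varinjlim(A_n^n, \gamma_n) \to A$ is an isomorphism. Surjectivity is also handled the same way in substance, though your extra ``approximate a dense sequence'' reindexing is unnecessary: since every $\beta_n^m$ is surjective, $\mu_n(A_n^n) = \alpha_{\infty,n}(A_n)$ exactly, and surjectivity of $\mu$ follows because its image is closed and dense.

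The genuine gap is in the injectivity step. The estimates produced by Lemma~\ref{lem:shape-map} hold \emph{only on the finite sets} $\mathcal F_n$ fed into it, with the tolerances $\epsilon_n$; outside those finite sets the lemma gives no control whatsoever. Your argument invokes only summability of the $\epsilon_n$ and never chooses the finite sets, so the claim that ``the rightward-then-downward composites $A_n^m \to A_k^k$ converge as $k \to \infty$'' does not follow: the two paths around each rectangle are close only on elements whose images happen to lie in the (unspecified) finite sets, and a priori this holds for no element at all. To make any version of this work --- whether your approximate-intertwining construction of an inverse, or a direct estimate --- one must choose, during the induction, finite sets $\mathcal F_n^m \subseteq A_n^m$ that are carried into one another by both the horizontal and the vertical maps, together with finite sets $\mathcal G_{n,j} \subseteq A_n^n$ whose union is dense in $A_n^n$ and which feed into the $\mathcal F$'s; this bookkeeping is exactly what the paper's conditions in its proof (the inclusions $\alpha_n^m(\mathcal F_n^m) \subseteq \mathcal F_{n+1}^m$, $\beta_n^m(\mathcal F_n^m) \subseteq \mathcal F_n^{m+1}$, density of $\bigcup_j \mathcal G_{n,j}$, $\mathcal G_{n,n} \subseteq \mathcal F_n^n$, and the $2^{-(n+m)}$ estimate on $\mathcal F_n^m$) is for. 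With that structure in place, the paper avoids constructing an inverse altogether: it proves injectivity by showing directly that if $a \in A_n^n$ satisfies $\mu(\gamma_{\infty,n}(a)) = 0$, then for every $\epsilon > 0$ one has $\|\gamma_{m,n}(a)\| < 4\epsilon$ for some $m$, by approximating $a$ by an element $b$ of some $\mathcal G_{n,j}$, pushing $b$ along the diagram, and propagating smallness through the summable rectangle estimates. Your proposal, as written, asserts the conclusion of this propagation (``convergence of the composites'', ``the identification isometric'') without the mechanism that makes it true.
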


\begin{proof}
	The $C^*$-algebras $A_n^m$ and the $^*$-homomorphisms $\alpha_n^m$ and $\beta_n^m$ will be constructed by induction on $n$ and we will define $\gamma_n = \beta_{n+1}^n \circ \alpha_n^n$.  In order to account for the diagonal inductive limit, we will also arrange for each $\beta_n^m$ to be surjective and for each square to approximately commute.	In more detail, we arrange for finite sets $\mathcal F_n^m \subseteq A_n^m$ and finite sets $\mathcal G_{n, j} \subseteq A_n^n$ for $j, m, n \in \mathbb N$ such that
	\begin{enumerate}\setcounter{enumi}{4}
		\item\label{finite-set:F-inc} $\alpha_n^m(\mathcal F_n^m) \subseteq \mathcal F_{n+1}^m$ and $\beta_n^m(\mathcal F_n^m) \subseteq \mathcal F_n^{m+1}$ for all $m, n \in \mathbb N$;
		\item\label{finite-set:G-inc} $\mathcal G_{n,j} \subseteq \mathcal G_{n,j+1}$ and $\gamma_n(\mathcal G_{n, j}) \subseteq \mathcal G_{n + 1, j}$ for all $j, n \in \mathbb N$;
		\item\label{finite-set:G-dense} $\bigcup_{j=1}^\infty \mathcal G_{n, j}$ is dense in $A_n^n$ for all $n \in \mathbb N$;
		\item\label{finite-set:G-to-F} $\mathcal G_{n, n} \subseteq \mathcal F_n^n$;
		\item\label{finite-set:commute} $\displaystyle \max_{a \in \mathcal F_n^m} \, \|\alpha_n^{m+1}(\beta_n^m(a)) - \beta_{n+1}^m(\alpha_n^m(a)) \| < 2^{-(n+m)}$ for all $m, n \in \mathbb N$.
	\end{enumerate}
	
	Let $\big((A_1^m, \beta_1^m)\big)_{m=1}^\infty$ be a shape system for $A_1$ such that each $\beta_1^m$ is surjective (see \cite[Theorem~4.3]{Blackadar85}).  Choose any increasing sequence of finite subsets $(\mathcal G_{1,j})_{j=1}^\infty$ of $A_1^1$ with dense union, and, for $m \in \mathbb N$, define $\mathcal F_1^m = \beta_1^{m, 1}(\mathcal G_{1, 1})$.  Assume $n \in \mathbb N$ and the first $n$ columns have been constructed.  Let $\big((A_{n+1}^m, \beta_{n+1}^m)\big)_{m=1}^\infty$ be a shape system for $A_{n+1}$.  After passing to a subsequence of $\big((A_{n+1}^m, \beta_{n+1}^m)\big)_{m=1}^\infty$, Lemma~\ref{lem:shape-map} provides the maps $\alpha_n^m$ satisfying the required homotopy-commuting property, \ref{double-lim:rectangle}, and \ref{finite-set:commute}.  Define $\gamma_n = \beta_{n+1}^m \circ \alpha_n^n$.  Choose finite sets $\mathcal G_{n+1, j} \subseteq A_{n+1}^{n+1}$ satisfying \ref{finite-set:G-inc} and \ref{finite-set:G-dense}.  Finally, choose finite sets $\mathcal F_{n+1}^m \subseteq A_{n+1}^m$ satisfying \ref{finite-set:F-inc} and \ref{finite-set:G-to-F}.
	
	We have now constructed the homotopy commuting diagram in the statement of the lemma satisfying conditions \ref{double-lim:col} and \ref{double-lim:rectangle}, and also with the properties that each $\beta_n^m$ is surjective, $\gamma_n = \beta_{n+1}^n \circ \alpha_n^n$, and conditions \ref{finite-set:F-inc}--\ref{finite-set:commute} hold.  For $n \in \mathbb N$, note that $\gamma_n$ is semiprojective since $\beta_{n+1}^n$ is, and hence $\big((A_n^n, \gamma_n)\big)_{n=1}^\infty$ forms a shape system.  The definition of $\gamma_n$ and \ref{double-lim:rectangle} implies there is a commuting diagram
	\[ \begin{tikzcd}
		A_1^1 \arrow{r}{\gamma_1} \arrow{d}[swap]{\beta_1^{\infty, 1}} & A_2^2 \arrow{r}{\gamma_2} \arrow{d}[swap]{\beta_2^{\infty, 2}} & A_3^3 \arrow{r}{\gamma_3} \arrow{d}[swap]{\beta_3^{\infty, 3}} & \cdots \\
		A_1 \arrow{r}[swap]{\alpha_1} & A_2 \arrow{r}[swap]{\alpha_2} & A_3 \arrow{r}[swap]{\alpha_3} & \cdots
	\end{tikzcd} \]
	that induces a $^*$-homomorphism $\beta \colon \varinjlim\,(A_n^n, \gamma_n) \rightarrow A$ with $\beta \circ \gamma_{\infty, n} = \alpha_{\infty, n} \circ \beta_n^{\infty, n}$.  It is enough to prove $\beta$ is an isomorphism: indeed, if so, then after using $\beta$ to identify $\varinjlim\,(A_n^n, \gamma_n)$ with $A$, both \ref{double-lim:diag} and \ref{double-lim:triangle} hold.
	
	Because $\beta_n^m$ is surjective for all $m, n \in \mathbb N$, we have $\beta_n^{\infty, n}$ is surjective for all $n \in \mathbb N$.  Therefore, $\beta$ is surjective.  To prove injectivity, it suffices to show that for all $n \in \mathbb N$, $a \in A_n^n$ with $\beta(\gamma_{\infty, n}(a)) = 0$, and $\epsilon > 0$, there is an $m \in \mathbb N$ such that $\|\gamma_{m, n}(a)\| < 4\epsilon$.  For any such $n$, $a$, and $\epsilon$, we have $\alpha_{\infty, n}(\beta_n^{\infty, n}(a)) = 0$, and hence there is $j \in \mathbb N$ with $j > n$ such that $\|\alpha_{j, n}(\beta_n^{\infty, n}(a)) \| < \epsilon$.  After increasing $j$, we may assume $4^{1-j} < \epsilon$.  Further, by \ref{finite-set:G-inc} and \ref{finite-set:G-dense} (increasing $j$ if necessary), we may assume there is a $b \in \mathcal G_{n, j}$ with $\|a - b\| < \epsilon$.  Conditions \ref{finite-set:G-inc} and \ref{finite-set:G-to-F} yield $\gamma_{j, n}(b) \in \mathcal F_j^j$.  Then the definition of $\gamma_n$ together with \ref{finite-set:F-inc} and \ref{finite-set:commute} imply that for each $m \in \mathbb N$ with $m > j$,
	\begin{equation}\label{eq:finite-triangle}
		\|\alpha_{m, j}^m(\beta_j^{m,j}(\gamma_{j, n}(b))) - \gamma_{m, n}(b)\| < 4^{1- j}  < \epsilon.
	\end{equation}
	Since $\|\alpha_{j, n}(\beta_n^{\infty, n}(a)\| < \epsilon$ and $\|a - b\| < \epsilon$, the definition of $\gamma_{n, j}$ and \ref{double-lim:rectangle} imply $\|\beta_j^{\infty, j}(\gamma_{j, n}(b))\| < 2 \epsilon$.  Hence there is $m \in \mathbb N$ with $\|\beta_j^{m, j}(\gamma_{j, n}(b)\| < 3 \epsilon$.  Then \eqref{eq:finite-triangle} yields $\|\gamma_{m, n}(b)\| < 3 \epsilon$, and hence $\|\gamma_{m, n}(a)\| < 4 \epsilon$, as required.  This shows $\beta$ is injective and completes the proof.
\end{proof}

With the lemmas above, we now prove continuity of $[[\,\cdot\,,B]]_{\mathrm{Hd}}$. Note that this fails for $[[\,\cdot\,, B]]$ in general.  For example, $[[S(\,\cdot\,)\otimes \mathcal K, SB \otimes \mathcal K]] = E(\,\cdot\,, B)$, and Milnor's $\lim^1$-sequence in $E$-theory provides an obstruction to preserving limits.

\begin{theorem}
  \label{thm:t0-continuity}
	If $(\underline A, \underline \alpha)$ is an inductive system of separable $C^*$-algebras with limit $A$ and $B$ is a $C^*$-algebra, then the $^*$-homomorphisms $\alpha_{\infty, n} \colon A_n \rightarrow A$ induce a homeomorphism 
	\[
	\begin{tikzcd}\relax 
		[[A, B]]_{\mathrm{Hd}} \arrow{r}{\cong} & \varprojlim \, ([[A_n, B]]_{\mathrm{Hd}}, \alpha_n^*). 
	\end{tikzcd} \]
\end{theorem}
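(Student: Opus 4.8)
The plan is to prove that the natural map is a homeomorphism by first treating shape systems, where Theorem~\ref{thm:hausdorff} applies directly, and then reducing the general case to this one through the double array of Lemma~\ref{lem:double-lim}. Since $\alpha_{\infty,n} = \alpha_{\infty,n+1}\circ\alpha_n$, the maps $\alpha_{\infty,n}^*$ are compatible with the bonding maps $\alpha_n^*$ and hence induce a continuous map
\[
\Lambda\colon [[A,B]]_{\mathrm{Hd}} \longrightarrow \varprojlim\,([[A_n,B]]_{\mathrm{Hd}}, \alpha_n^*);
\]
the goal is to show $\Lambda$ is a homeomorphism.

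\emph{Shape systems.} Suppose first that $(\underline A,\underline\alpha)$ is itself a shape system. Then each $\alpha_{\infty,n} = \alpha_{\infty,n+1}\circ\alpha_n$ is semiprojective, so by Lemma~\ref{lem:factor} the maps $\alpha_{\infty,n}^*$ take values in $\Homah{A_n}{B}$, and Theorem~\ref{thm:hausdorff} supplies a homeomorphism $[[A,B]]_{\mathrm{Hd}}\cong\varprojlim(\Homah{A_n}{B},\alpha_n^*)$. The natural maps $\Homah{A_n}{B}\to[[A_n,B]]_{\mathrm{Hd}}$ (continuous by Proposition~\ref{prop:inclusion-continuous}) intertwine the two systems, and I would check that the induced map of projective limits is a bijection: every coordinate of a compatible family in $\varprojlim[[A_n,B]]_{\mathrm{Hd}}$ is represented by a genuine $^*$-homomorphism because $\alpha_n^*$ lands in $\Homah{A_n}{B}$ (Lemma~\ref{lem:factor}), giving surjectivity, while Lemma~\ref{lem:t0-equiv-homotopic} shows that $\sim_{\mathrm{Hd}}$-equivalent homomorphism classes agree after applying the semiprojective $\alpha_n$, giving injectivity. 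For continuity of the inverse I would use Corollary~\ref{cor:discrete-factor} to factor each $\alpha_n^*$ through a (countable) discrete space; since $[[A_n,B]]_{\mathrm{Hd}}$ is the Kolmogorov quotient of $[[A_n,B]]$, the universal property of that quotient forces the bonding maps of the Hausdorffized system to factor through the \emph{same} discrete spaces. Both projective limits are then the limit of a single tower of discrete spaces, so the continuous bijection is a homeomorphism.

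\emph{The general case.} Apply Lemma~\ref{lem:double-lim} to obtain the array $(A_n^m)$ whose columns are shape systems for $A_n$ (\ref{double-lim:col}) and whose diagonal is a shape system for $A$ (\ref{double-lim:diag}). Theorem~\ref{thm:hausdorff} then yields homeomorphisms $[[A_n,B]]_{\mathrm{Hd}}\cong\varprojlim_m\Homah{A_n^m}{B}$ for each $n$ and $[[A,B]]_{\mathrm{Hd}}\cong\varprojlim_n\Homah{A_n^n}{B}$. Because the array commutes up to homotopy and precomposition is homotopy invariant, the spaces $\Homah{A_n^m}{B}$ assemble into a genuine projective system over $\mathbb N\times\mathbb N$: the horizontal maps $(\alpha_n^m)^*$ and vertical maps $(\beta_n^m)^*$ both carry homomorphism classes to homomorphism classes and their squares commute strictly. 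The diagonal is cofinal in $\mathbb N\times\mathbb N$, so the limit over it is the whole limit, identifying $[[A,B]]_{\mathrm{Hd}}$ with $\varprojlim_{(n,m)}\Homah{A_n^m}{B}$; the Fubini property of projective limits then identifies this with $\varprojlim_n\varprojlim_m\Homah{A_n^m}{B}\cong\varprojlim_n[[A_n,B]]_{\mathrm{Hd}}$.

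Finally I would verify that the composite homeomorphism so produced is exactly $\Lambda$, and this is where conditions \ref{double-lim:rectangle} and \ref{double-lim:triangle} do the work: \ref{double-lim:rectangle} ensures the column identifications intertwine the original $\alpha_n^*$ with the horizontal maps $(\alpha_n^m)^*$, while \ref{double-lim:triangle} ($\gamma_{\infty,n}=\alpha_{\infty,n}\circ\beta_n^{\infty,n}$) ensures the diagonal identification intertwines with $\alpha_{\infty,n}^*$; tracing a class $[[\phi]]_{\mathrm{Hd}}$ through the diagram shows its image has $n$-th coordinate $[[\phi\circ\alpha_{\infty,n}]]_{\mathrm{Hd}}$. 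I expect the main obstacle to be precisely this compatibility bookkeeping together with the continuity of the inverse in the shape-system step, since the inclusion $\Homah{A_n}{B}\to[[A_n,B]]$ is not known to be a topological embedding and $\Homah{A_n}{B}$ need not be Hausdorff, so the homeomorphism cannot be read off coordinatewise and genuinely requires the discrete factorizations of Corollary~\ref{cor:discrete-factor} and the universal property of the Kolmogorov quotient.
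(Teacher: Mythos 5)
Your proposal is correct and is essentially the paper's own argument: both proofs rest on the array of Lemma~\ref{lem:double-lim} together with Theorem~\ref{thm:hausdorff} applied to the diagonal and to each column, with conditions \ref{double-lim:rectangle} and \ref{double-lim:triangle} supplying the compatibility, and your cofinality-plus-Fubini packaging of the $\mathbb{N}\times\mathbb{N}$-indexed system of spaces $\Homah{A_n^m}{B}$ is a reorganization of the paper's explicit construction of the inverse map $g$ and its coordinate-by-coordinate verification. The only substantive deviation is your preliminary shape-system case, which your general argument never actually uses (and whose surjectivity sketch would in any case need Lemma~\ref{lem:t0-equiv-homotopic}, not just Lemma~\ref{lem:factor}, to make the lifted representatives strictly compatible in $\varprojlim \Homah{A_n}{B}$).
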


\begin{proof}
	Let $X = \varprojlim\, ([[A_n, B]]_{\mathrm{Hd}}, \alpha_n^*)$ and let $f_{n, \infty} \colon X \rightarrow [[A_n, B]]_{\mathrm{Hd}}$ be the canonical maps.	 The maps $\alpha_{\infty, n}^*$ induce a continuous map $f \colon [[A, B]]_{\mathrm{Hd}} \rightarrow X$ with $f_{n, \infty} \circ f = \alpha_{\infty, n}^*$.  We will construct a continuous inverse of $f$.  We adopt the notation from Lemma~\ref{lem:double-lim}.  The diagram in Lemma~\ref{lem:double-lim} induces a commuting diagram
	\[ \begin{tikzcd}[row sep = 40pt]\relax
		\Homah{A_1^1}{B} & \Homah{A_2^1}{B} \arrow{l}[swap]{(\alpha_1^1)^*}  & \Homah{A_3^1}{B} \arrow{l}[swap]{(\alpha_2^1)^*} & \cdots \arrow{l}[swap]{(\alpha_3^1)^*} &[-50pt] \\
		\Homah{A_1^2}{B} \arrow{u}[pos = .6]{(\beta_1^1)^*} & \Homah{A_2^2}{B} \arrow{l}[swap]{(\alpha_1^2)^*} \arrow{u}[pos = .6]{(\beta_2^1)^*} \arrow{ul}[swap, pos = .6]{\gamma_1^*} & \Homah{A_3^2}{B} \arrow{l}[swap]{(\alpha_2^2)^*} \arrow{u}[pos = .6]{(\beta_3^1)^*}  &  \cdots \arrow{l}[swap]{(\alpha_3^2)^*} & \\
		\Homah{A_1^3}{B} \arrow{u}[pos = .6]{(\beta_1^2)^*} & \Homah{A_2^3}{B} \arrow{l}[swap]{(\alpha_1^3)^*} \arrow{u}[pos = .6]{(\beta_2^2)^*} & \Homah{A_3^3}{B} \arrow{l}[swap]{(\alpha_2^3)^*} \arrow{u}[pos = .6]{(\beta_3^2)^*} \arrow{ul}[swap, pos = .6]{\gamma_2^*} & \cdots \arrow{l}[swap]{(\alpha_3^3)^*} & \\
		\phantom{A_1} \arrow{u}[pos = .6]{(\beta_1^3)^*} & \phantom{A_2} \arrow{u}[pos = .6]{(\beta_2^3)^*} & \phantom{A_3} \arrow{u}[pos = .6]{(\beta_3^3)^*} & \phantom{A_4} \arrow{ul}[swap, pos = .6]{\gamma_3^*} &  \\[-33pt]
		[[A_1, B]]_{\mathrm{Hd}} \arrow[phantom]{u}[description, pos = 2]{\rotatebox{90}{.\,.\,.\,}} &\relax [[A_2, B]]_{\mathrm{Hd}} \arrow{l}[swap]{\alpha_1^*} \arrow[phantom]{u}[description, pos = 2]{\rotatebox{90}{.\,.\,.\,}} &\relax [[A_3, B]]_{\mathrm{Hd}} \arrow{l}[swap]{\alpha_2^*} \arrow[phantom]{u}[description, pos = 2]{\rotatebox{90}{.\,.\,.\,}}& \phantom{\Homah{A_4}{B}} \arrow{l}[swap]{\alpha_3^*} &\relax [[A, B]]_{\mathrm{Hd}} \arrow[phantom]{l}[description, pos = 2.5]{\cdots} \arrow[phantom]{ul}[description, pos =2]{\rotatebox{-40}{.\,.\,.}}
	\end{tikzcd} \]
	of topological spaces and continuous maps.  By Theorem~\ref{thm:hausdorff}, the maps $\gamma_{\infty, n}^*$ induce a homeomorphism
	\[ \begin{tikzcd}\relax 
		[[A, B]]_{\mathrm{Hd}} \arrow{r}{\cong} & \varprojlim\, (\Homah{A_n^n}{B}, \gamma_n^*). 
	\end{tikzcd} \]
	Therefore, the maps $(\beta_n^{\infty, n})^* \colon [[A_n, B]]_{\mathrm{Hd}} \rightarrow \Homah{A_n^n}{B}$ induce a continuous map $g \colon X \rightarrow [[A, B]]_{\mathrm{Hd}}$
	such that $\gamma_{\infty, n}^* \circ g = (\beta_n^{\infty, n})^* \circ f_{\infty, n}$ for all $n \in \mathbb N$.
	
	For $n \in \mathbb N$, we have
	\[ \gamma_{\infty, n}^* \circ g \circ f = (\beta_n^{\infty, n})^* \circ f_{\infty, n} \circ f = (\beta_n^{\infty, n})^* \circ \alpha_{\infty, n}^* = \gamma_{\infty, n}^*, \]
	and hence $g \circ f = \mathrm{id}_{[[A, B]]_{\mathrm{Hd}}}$.  In the other direction, to show $f \circ g = \mathrm{id}_X$, it suffices to show $f_{n, \infty} \circ f \circ g = f_{n, \infty}$ for all $n \in \mathbb N$.  To this end, fix $n \in \mathbb N$.  By Theorem~\ref{thm:hausdorff}, the maps $\beta_n^{\infty, m}$, $m \in \mathbb N$, induce a homeomorphism
	\[ \begin{tikzcd}\relax 
		[[A_n, B]]_{\mathrm{Hd}} \arrow{r}{\cong} & \varprojlim\, (\Homah{A_n^m}{B}, (\beta_n^m)^*),
	\end{tikzcd} \]
	so it suffices to show $(\beta_n^{\infty, m})^* \circ f_{n, \infty} \circ f \circ g = (\beta_n^{\infty, m})^* \circ f_{n, \infty}$ for all $m \in \mathbb N$ with $m > n$.  For such $m$, we compute
	\begin{align*}
		   (\beta_n^{\infty, m})^* \circ f_{n, \infty} \circ f \circ g 
		&= (\beta_n^{\infty, m})^* \circ \alpha_{\infty, n}^* \circ g \\
		&= (\alpha^m_{m,n})^* \circ (\beta_m^{\infty, m})^* \circ \alpha_{\infty, m}^* \circ g \\
		&= (\alpha_{m,n}^m)^* \circ \gamma_{\infty, m}^* \circ g \\
		&= (\alpha_{m, n}^m)^* \circ (\beta_m^{\infty, m})^* \circ f_{\infty, m} \\
		&= (\beta_n^{\infty, m})^* \circ \alpha_{m, n}^* \circ f_{\infty, m} \\
		&= (\beta_n^{\infty, m})^* \circ f_{\infty, n},
	\end{align*}
	as required.  So $f \circ g = \mathrm{id}_X$, and $f$ is a homeomorphism.
\end{proof}

\section{Applications}
\label{sec:applications}

Recall from Definition~\ref{def:homotopy-limit} that every strong homotopy morphism  $(f, \underline \phi, \underline h)$ between inductive systems of $C^*$-algebras $(\underline A, \underline \alpha)$ and $(\underline B, \beta)$ induces an asymptotic morphism
\[ \hlim\,(f, \underline \phi, \underline h)  \colon \varinjlim\,(\underline A, \underline \alpha) \xrightarrow\approx \varinjlim\,(\underline B, \underline \beta). \]
Dadarlat showed in \cite{Dadarlat94} that this construction produces a functor $\hoind \rightarrow \AM$ on a suitable category $\hoind$ (the objects of which are inductive systems of separable $C^*$-algebras), and restricts to an equivalence on the \emph{strong shape category}, defined as the full subcategory $\ssh$ of $\hoind$ whose objects of which are shape systems (Theorem~\ref{thm:hlim-equiv}).  We will combine this result with our work in the previous sections to show that the Hausdorffized asymptotic category $\AMH$ (see Definition~\ref{def:hd-asym-cat}) is equivalent to the shape category $\sh$ (Theorem~\ref{thm:haus-functor-equiv}).  This will prove Theorem~C.

In Section~\ref{sec:e-theory}, we apply these results to obtain a topology on $E$-theory.  In particular, we prove Theorems~\ref{thm:main} and~\ref{thm:cont-EL} from the introduction.  Some other properties of $E$-theory mentioned in the introduction are also discussed.
	
\subsection{The Hausdorffized homotopy limit functor}
\label{sec:hausd-homot-limit}

We first establish some notation.  There are several categories appearing in this subsection.  For convenience, abbreviated forms of the definitions are collected in Table~\ref{tab:categories}.  

Let $\cstar$ be the category of separable $C^*$-algebras and $^*$-homomorphisms.  If $A$ and $B$ are $C^*$-algebras, we write $[A, B]$ for the set of homotopy equivalence classes of $^*$-homomorphisms $A\rightarrow B$.  Let $\ho$ be the category whose objects are separable $C^*$-algebras and where the morphisms $A\to B$ are the elements of $[A, B]$.

For any category $\mathsf D$,  we let $\ind{D}$ denote the category of
(sequential) inductive systems in $\mathsf D$ modulo passing to subsequences.  More precisely, objects in $\ind{D}$ are given by inductive systems $(\underline A, \underline \alpha)$ in $\mathsf D$ and a morphism $(f, \underline\phi) \colon (\underline A, \underline \alpha) \rightarrow (\underline B, \underline \beta)$ is represented by a pair consisting of a strictly increasing function $f \colon \mathbb N \rightarrow \mathbb N$ and a sequence $\underline \phi = (\phi_n \colon A_n \rightarrow B_{f(n)})_{n=1}^\infty$ of morphisms such that for all $n \in \mathbb N$, $\beta_{f(n+1), f(n)} \circ \phi_n = \phi_{n+1} \circ \alpha_n$.  Two morphisms 
\[ (f, \underline \phi), (g, \underline \psi) \colon (\underline A, \underline \alpha) \rightarrow (\underline B, \underline \beta) \]
are \emph{equivalent}, written as $(f, \underline \phi) \cong (g, \underline \beta)$, if for all $n \in\mathbb N$, there is $m \in \mathbb N$ such that $m > \max\{f(n), g(n)\}$ and $\beta_{m, f(n)} \circ \phi_n = \beta_{m, g(n)} \circ \psi_n$.  Note that if $\mathsf D$ is closed under sequential inductive limits, then there is a canonical \emph{inductive limit functor} $\varinjlim \colon \ind{D} \rightarrow \mathsf D$.  In particular, this is the case for $\mathsf D = \cstar$.

In the special case $\mathsf D = \ho$, we will always view morphisms in $\indho$ as being represented by a homotopy morphism $(f, \underline \phi)$ of inductive systems (see Definition~\ref{def:homotopy-morphisms}) and write the equivalence class of such a morphism as $[f, \underline \phi]$.  Therefore, given homotopy morphisms
\[
  (f, \underline \phi), (g, \underline \psi) \colon (\underline A, \underline \alpha) \rightarrow (\underline B, \underline \beta),
\]
we have $[f, \underline \phi] = [g, \underline \beta]$ if and only if for all $n \in \mathbb N$, there is $m \in \mathbb N$ such that $m > \max\{f(n), g(n)\}$ and the $^*$-homomorphisms $\beta_{\infty, f(n)} \circ \phi_n$  and $\beta_{\infty, g(n)} \circ \psi_n$ are homotopic.

As pointed out in \cite[Section~1.4]{Dadarlat94}, the inductive limit functor does not descend to a functor $\indho \to \ho$.  Dadarlat addressed this by using a variation of the category $\indho$, called $\hoind$, which may be regarded as the homotopy category of $\indC$.  Briefly, objects are given by (sequential) inductive systems of separable $C^*$-algebras (as in $\indC$), and morphisms are equivalence classes $[[f, \underline \phi, \underline h]]$ of strong homotopy morphisms of inductive systems.  We refer the reader to \cite[Definition 3.6]{Dadarlat94} for the precise definition of the equivalence relation, which is a modified version of the equivalence in $\indho$ that accounts for the extra data given by the sequence of homotopies $\underline h$.  In particular, any morphism  in $\indC$ induces a morphism in $\hoind$ with constant homotopies, yielding a functor $\hoinclude\colon \indC\to \hoind$ (which is the identity on objects.)

For our purposes, the most relevant property of $\hoind$ is that the homotopy limit construction $\hlim$ induces a functor $\hoind \rightarrow \AM$, which we continue to write as $\hlim$. It is proved in \cite[Section~2]{Dadarlat94} that
\[ \begin{tikzcd}
    \indC \ar[r, "\hoinclude"] \ar[d, swap, "\varinjlim"] & \hoind \arrow{d}{\hlim} \\
    \cstar \arrow{r}{\rm As} & \AM
  \end{tikzcd}
\] 
commutes, where $\asinclusion$ is the identity on objects and $\asinclusion(\phi) = [[\phi]]$ for every $^*$-ho\-mo\-morphism $\phi$.

The \emph{shape category} $\sh$ and \emph{strong shape category}
$\ssh$ are defined as the full subcategories $\sh \subseteq \indho$ and $\ssh \subseteq \hoind$ whose objects are given by shape systems.  The following result is due to Dadarlat.

\begin{table}
  \centering\renewcommand{\arraystretch}{1.3}
  \begin{tabular}{l p{.3\linewidth} p{.4\linewidth}}
    \toprule
    Category & Objects & Morphisms\\\midrule
    $\cstar$ & Separable $C^*$-algebras & $\phi$, $^*$-homomorphisms\\
    $\AM$ & Separable $C^*$-algebras & $[[\phi]]\in [[A,B]]$, asymptotic homotopy classes of $^*$-homomorphisms\\
    $\AMH$ & Separable $C^*$-algebras & $[[\phi]]_{\mathrm{Hd}}\in [[A,B]]_{\mathrm{Hd}}$, Haus\-dorff\-ized asymptotic homotopy classes of homomorphisms\\
    $\ind{D}$ & Inductive systems in $\mathsf{D}$ & $(f, \underline{\phi})$, pairs of increasing functions on $\mathbb{N}$ and sequences of compatible morphisms in $\mathsf{D}$\\
    $\ho$ & Separable $C^*$-algebras & $[\phi]\in [A,B]$, homotopy classes of $^*$-homomorphisms\\
    $\hoind$ & Inductive systems in $\cstar$ & $[[f, \underline{\phi}, \underline{h}]]$, equivalence classes of strong homotopy morphisms\\
    $\sh$ & Shape systems & Same as $\indho$\\
    $\ssh$ & Shape systems & Same as $\hoind$\\
    \bottomrule\mbox{}\renewcommand{\arraystretch}{1}
  \end{tabular}
  \caption{Some of the categories considered in this section.}
  \label{tab:categories}
\end{table}

\begin{theorem}[{\cite[Theorem~3.7]{Dadarlat94}}]
  \label{thm:hlim-equiv}
	The homotopy limit functor restricts to an equivalence of categories
	\[ \hlim \colon \ssh \xrightarrow\sim \AM. \]
\end{theorem}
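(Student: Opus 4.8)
The plan is to regard this as the assertion that the functor $\hlim$, already available on all of $\hoind$, becomes an equivalence after restriction to the full subcategory $\ssh$ of shape systems, and to establish this by checking essential surjectivity and full faithfulness. On objects, the commuting square relating $\hoinclude$, $\asinclusion$, $\varinjlim$, and $\hlim$ identifies $\hlim$ with $\varinjlim$ (both $\hoinclude$ and $\asinclusion$ act as the identity on objects), so $\hlim(\underline A,\underline\alpha)=\varinjlim(\underline A,\underline\alpha)$. Since every separable $C^*$-algebra admits a shape system with the prescribed limit (recalled in Section~\ref{sec:prelmin-shape}), $\hlim$ is surjective on objects, giving essential surjectivity. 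It then remains, for fixed shape systems $(\underline A,\underline\alpha)$ and $(\underline B,\underline\beta)$ with limits $A$ and $B$, to show that $\hlim$ is a bijection from the morphisms $(\underline A,\underline\alpha)\to(\underline B,\underline\beta)$ in $\ssh$ onto $[[A,B]]$.

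Fullness is immediate from Theorem~\ref{thm:diagram-rep}: because $(\underline A,\underline\alpha)$ is a shape system, every asymptotic morphism $\phi\colon A\xrightarrow\approx B$ is realized as $\hlim(f,\underline\phi,\underline h)$ for a suitable strong homotopy morphism, so each class in $[[A,B]]$ lies in the image of $\hlim$.

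The substance of the argument is faithfulness. Suppose $(f,\underline\phi,\underline h)$ and $(g,\underline\psi,\underline k)$ have homotopy limits $\phi$ and $\psi$ with $[[\phi]]=[[\psi]]$. By Proposition~\ref{prop:limit-factor}, $[[\beta_{\infty,f(n)}\circ\phi_n]]=[[\phi\circ\alpha_{\infty,n}]]$ and $[[\beta_{\infty,g(n)}\circ\psi_n]]=[[\psi\circ\alpha_{\infty,n}]]$ for every $n$; composing $[[\phi]]=[[\psi]]$ on the right with $[[\alpha_{\infty,n}]]$ then yields $[[\beta_{\infty,f(n)}\circ\phi_n]]=[[\beta_{\infty,g(n)}\circ\psi_n]]$ in $[[A_n,B]]$, so the two stage-$n$ $^*$-homomorphisms $A_n\to B$ are asymptotically homotopic. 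Since each connecting map $\alpha_n$ of the shape system is semiprojective, Lemma~\ref{lem:factor2} applied to $\alpha_n$ upgrades the asymptotic homotopy at stage $n+1$ to a genuine homotopy between $\beta_{\infty,f(n+1)}\circ\phi_{n+1}\circ\alpha_n$ and $\beta_{\infty,g(n+1)}\circ\psi_{n+1}\circ\alpha_n$; combining this with the homotopies recorded by $\underline h$ and $\underline k$ (which connect $\phi_{n+1}\circ\alpha_n$ to $\beta_{f(n+1),f(n)}\circ\phi_n$, and likewise for $\psi$) produces an honest homotopy between $\beta_{\infty,f(n)}\circ\phi_n$ and $\beta_{\infty,g(n)}\circ\psi_n$ at every stage $n$.

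To finish, I would feed these stagewise homotopies into the equivalence relation defining morphisms in $\hoind$ (see \cite[Definition~3.6]{Dadarlat94}) to conclude $[[f,\underline\phi,\underline h]]=[[g,\underline\psi,\underline k]]$, and hence faithfulness. I expect this last matching to be the main obstacle: the relation in $\hoind$ remembers more than bare stagewise homotopy, demanding coherence with the connecting homotopies $\underline h,\underline k$ and with the reindexing functions $f,g$, so the homotopies produced above must be organized into a homotopy-coherent equivalence (the one-index shift coming from the semiprojectivity of $\alpha_n$ being the bookkeeping to watch). It is exactly the semiprojectivity of every structure map of a shape system---not available for general inductive systems---that I expect to make this coherence attainable.
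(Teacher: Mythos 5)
First, a point of comparison: the paper does not prove this statement at all---it is imported wholesale as \cite[Theorem~3.7]{Dadarlat94}---so your proposal can only be measured against Dadarlat's argument, not against anything in this paper. Your treatment of density (existence of shape systems) and of fullness (immediate from Theorem~\ref{thm:diagram-rep}) is correct. Your intermediate reduction in the faithfulness argument---Proposition~\ref{prop:limit-factor}, then Lemma~\ref{lem:factor2} via semiprojectivity of the $\alpha_n$, then concatenation with the homotopies $\underline h$, $\underline k$ to get genuine stagewise homotopies between $\beta_{\infty,f(n)}\circ\phi_n$ and $\beta_{\infty,g(n)}\circ\psi_n$---is also sound; indeed it is the identical manipulation the paper performs in the injectivity step of Theorem~\ref{thm:hausdorff}.

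The gap you flag at the end, however, is not bookkeeping: it cannot be closed from the data you have retained. Stagewise homotopy is exactly equality of the images under $\hoforget$ in $\sh$, and this is strictly weaker than equivalence in $\ssh$. If your final step were a valid inference, then applying the functor $\hlim$ to it would show that stagewise-homotopic strong homotopy morphisms always have equal homotopy limits in $[[A,B]]$; that is, Proposition~\ref{prop:homotopy-limit} would hold with $[[\,\cdot\,]]$ in place of $[[\,\cdot\,]]_{\mathrm{Hd}}$. This is false. Whenever $\hausdorffize\colon [[A,B]]\to [[A,B]]_{\mathrm{Hd}}$ fails to be injective---for instance $E(A,B)=[[SA\otimes\mathcal{K}, SB\otimes\mathcal{K}]]$ with nonzero $\varprojlim^1$ term, so that $\overline{\{0\}}\neq\{0\}$ (see the introduction and the remarks preceding Theorem~\ref{thm:t0-continuity})---one may choose $x\neq y$ with $x\sim_{\mathrm{Hd}}y$, represent both by strong homotopy morphisms between shape systems via Theorem~\ref{thm:diagram-rep}, and the faithfulness half of the proof of Theorem~\ref{thm:haus-functor-equiv} shows these representatives are stagewise homotopic; yet they cannot be $\ssh$-equivalent, since their homotopy limits differ. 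Notice also that the paper, when it holds exactly the data you produce, extracts only the conclusion $[[\phi]]_{\mathrm{Hd}}=[[\psi]]_{\mathrm{Hd}}$ (Theorem~\ref{thm:hausdorff} via Proposition~\ref{prop:homotopy-limit})---because that is all this data implies. The moral is that in passing to stagewise homotopies you discard precisely the strength of the hypothesis $[[\phi]]=[[\psi]]$ as opposed to $[[\phi]]_{\mathrm{Hd}}=[[\psi]]_{\mathrm{Hd}}$. The repair, and the substance of Dadarlat's proof, is to use the asymptotic homotopy $\theta\colon A\xrightarrow{\approx}C([0,1],B)$ witnessing $[[\phi]]=[[\psi]]$ itself: regard $C([0,1],B)$ as $\varinjlim\, C([0,1],B_n)$, apply the representation machinery behind Theorem~\ref{thm:diagram-rep} to $\theta$, and then identify the endpoint restrictions of the resulting strong homotopy morphism with $(f,\underline\phi,\underline h)$ and $(g,\underline\psi,\underline k)$ up to the equivalence of \cite[Definition~3.6]{Dadarlat94}; that endpoint identification, not the stagewise homotopies, is where the real work lies.
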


Theorem~\ref{thm:haus-functor-equiv} below is the analog of
Theorem~\ref{thm:hlim-equiv} for the categories $\sh$ and $\AMH$.  We
will need some more notation.  Let $\hoforget \colon \hoind
\rightarrow \indho$ be the functor that is the identity on objects and
is defined on morphisms by
\[
  \hoforget([[f, \underline \phi, \underline h]]) = [f, \underline \phi]
\] 
and let  $\hausdorffize \colon \AM \rightarrow \AMH$ be the functor that is the identity on objects and is defined by morphisms by 
\[ \hausdorffize([[\phi]]) = [[\phi]]_{\mathrm{Hd}}. \]

\begin{theorem}
  \label{thm:hlim-diag-commute}
	There is a unique functor $\hlim_{\mathrm{Hd}} \colon \indho
        \rightarrow \AMH$ such that
	\[ \begin{tikzcd}
		\indC \arrow{r}{\hoinclude} \ar[d, swap, "\varinjlim"] & \hoind \arrow{r}{\hoforget} \arrow{d}{\hlim} & \indho \arrow{d}{\hlim_{\mathrm{Hd}}} \\ \cstar \arrow{r}{\rm As} & \AM \arrow{r}{\hausdorffize} & \AMH
	\end{tikzcd} \] 
	commutes.  Explicitly, on objects, $\hlim_{\mathrm{Hd}}(\underline A,
        \underline \alpha) = \varinjlim \, (\underline A, \underline
        \alpha)$, and on morphisms, $\hlim_{\mathrm{Hd}}([f,
        \underline \phi]) = [[\hlim (f, \underline \phi,
        \underline h)]]_{\mathrm{Hd}}$, where
        $(f, \underline \phi, \underline h)$ is a strong homotopy morphism.
\end{theorem}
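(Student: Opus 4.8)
The plan is to \emph{define} $\hlim_{\mathrm{Hd}}$ by the stated formulas and then verify well-definedness, commutativity of the right-hand square, functoriality, and uniqueness in turn. On objects, set $\hlim_{\mathrm{Hd}}(\underline A, \underline \alpha) = \varinjlim\,(\underline A, \underline \alpha)$; this choice is forced, as $\hoforget$ is the identity on objects, $\hlim$ sends an inductive system to its inductive limit, and $\hausdorffize$ is the identity on objects. For a morphism $[f, \underline \phi]$ of $\indho$, recall that by Definition~\ref{def:homotopy-morphisms} the homotopy morphism $(f, \underline \phi)$ admits \emph{some} sequence of homotopies $\underline h$ promoting it to a strong homotopy morphism; set $\hlim_{\mathrm{Hd}}([f, \underline \phi]) = [[\hlim\,(f, \underline \phi, \underline h)]]_{\mathrm{Hd}}$.

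The crux is well-definedness, and here Proposition~\ref{prop:homotopy-limit} does all the work. Suppose $(f, \underline \phi, \underline h)$ and $(g, \underline \psi, \underline k)$ are strong homotopy morphisms representing the same class $[f, \underline \phi] = [g, \underline \psi]$ in $\indho$. By the description of the equivalence relation in $\indho$ recalled above, this is exactly the statement that $\beta_{\infty, f(n)} \circ \phi_n$ and $\beta_{\infty, g(n)} \circ \psi_n$ are homotopic for every $n \in \mathbb N$, which is precisely the hypothesis of Proposition~\ref{prop:homotopy-limit} (applicable since the objects of $\indho$ are inductive systems of \emph{separable} $C^*$-algebras). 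That proposition gives $[[\hlim\,(f, \underline \phi, \underline h)]]_{\mathrm{Hd}} = [[\hlim\,(g, \underline \psi, \underline k)]]_{\mathrm{Hd}}$. Taking $(g, \underline \psi) = (f, \underline \phi)$ handles independence of the choice of $\underline h$, and the general case handles independence of the chosen representative; thus $\hlim_{\mathrm{Hd}}$ is well-defined on morphisms.

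With well-definedness in hand, commutativity of the right square is immediate: for any strong homotopy morphism $(f, \underline \phi, \underline h)$ we have $\hoforget([[f, \underline \phi, \underline h]]) = [f, \underline \phi]$, so
\[ \hlim_{\mathrm{Hd}}\big(\hoforget([[f, \underline \phi, \underline h]])\big) = [[\hlim\,(f, \underline \phi, \underline h)]]_{\mathrm{Hd}} = \hausdorffize\big(\hlim\,([[f, \underline \phi, \underline h]])\big). \]
Combined with the commutativity of the left square (already recorded for $\hlim$ from \cite{Dadarlat94}), the whole diagram commutes.

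Functoriality and uniqueness are then formal, resting on the fact that $\hoforget$ is the identity on objects and \emph{full}—every homotopy morphism lifts to a strong homotopy morphism by Definition~\ref{def:homotopy-morphisms}. For functoriality I would lift a composable pair of morphisms of $\indho$ to $\hoind$, then use that $\hoforget$ preserves identities and composition together with the functoriality of $\hlim$ (Theorem~\ref{thm:hlim-equiv}) and of $\hausdorffize$ (Proposition~\ref{prop:composition}) to transport the identity $\hlim_{\mathrm{Hd}} \circ \hoforget = \hausdorffize \circ \hlim$ across the composite. For uniqueness, any functor $G$ making the square commute satisfies $G \circ \hoforget = \hausdorffize \circ \hlim$, which—because $\hoforget$ is full and the identity on objects—forces $G([f, \underline \phi]) = [[\hlim\,(f, \underline \phi, \underline h)]]_{\mathrm{Hd}}$ on every lift, so $G = \hlim_{\mathrm{Hd}}$. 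The only genuine content is Proposition~\ref{prop:homotopy-limit}; the remaining steps are bookkeeping with a full functor, and I expect the sole point requiring care to be matching the $\indho$-equivalence to that proposition's hypotheses.
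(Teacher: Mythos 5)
Your proposal is correct and takes essentially the same route as the paper: the paper's entire proof is that well-definedness on morphisms is immediate from the well-definedness of $\hlim$ together with Proposition~\ref{prop:homotopy-limit}, and that ``the rest follows easily.'' Your write-up simply makes explicit the bookkeeping the paper leaves implicit (matching the $\indho$-equivalence to the hypothesis of Proposition~\ref{prop:homotopy-limit}, and deriving commutativity, functoriality, and uniqueness from the fullness of $\hoforget$), which is exactly the intended argument.
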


\begin{proof}
  It suffices to show that $\hlim_{\mathrm{Hd}}$ is well-defined on morphisms
  as the rest follows easily.  This is immediate from the
  well-definedness of $\hlim$ and
  Proposition~\ref{prop:homotopy-limit}.
\end{proof}

The following is the precise version of Theorem~\ref{thm:haus-shape}
from the introduction.

\begin{theorem}
  \label{thm:haus-functor-equiv}
	The Hausdorffized homotopy limit functor induces an equivalence of categories
	\[ \hlim\nolimits_{\mathrm{Hd}} \colon \sh \xrightarrow\sim \AMH. \]
\end{theorem}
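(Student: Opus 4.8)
The plan is to prove that $\hlim_{\mathrm{Hd}} \colon \sh \to \AMH$ is essentially surjective and fully faithful, leveraging Dadarlat's equivalence $\hlim \colon \ssh \xrightarrow{\sim} \AM$ (Theorem~\ref{thm:hlim-equiv}) together with the commuting square of Theorem~\ref{thm:hlim-diag-commute}. Essential surjectivity is immediate: on objects $\hlim_{\mathrm{Hd}}(\underline A, \underline\alpha) = \varinjlim(\underline A, \underline\alpha)$, and since every separable $C^*$-algebra admits a shape system with the prescribed limit, $\hlim_{\mathrm{Hd}}$ is in fact surjective on objects.

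For full faithfulness I would fix shape systems $(\underline A,\underline\alpha)$ and $(\underline B,\underline\beta)$ with limits $A$, $B$ and examine the induced map on hom-sets. Theorem~\ref{thm:hlim-diag-commute} gives $\hlim_{\mathrm{Hd}} \circ \hoforget = \hausdorffize \circ \hlim$ between the corresponding hom-sets into $[[A,B]]_{\mathrm{Hd}}$. The map $\hoforget$, which forgets the homotopies $\underline h$, is surjective on hom-sets, since by definition every homotopy morphism $(f,\underline\phi)$ extends to a strong homotopy morphism. As $\hlim$ is bijective and $\hausdorffize$ is the surjective Hausdorffization quotient, the composite $\hausdorffize \circ \hlim$ is surjective; hence $\hlim_{\mathrm{Hd}} \circ \hoforget$, and therefore $\hlim_{\mathrm{Hd}}$ itself, is surjective on hom-sets, giving fullness.

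The crux is faithfulness. Suppose $[f,\underline\phi]$ and $[g,\underline\psi]$ are morphisms in $\sh$ with the same image, and realize them by strong homotopy morphisms with homotopy limits $\phi, \psi \colon A \xrightarrow{\approx} B$, so that the explicit formula in Theorem~\ref{thm:hlim-diag-commute} gives $[[\phi]]_{\mathrm{Hd}} = [[\psi]]_{\mathrm{Hd}}$. The projective limit homeomorphism $[[A,B]]_{\mathrm{Hd}} \cong \varprojlim\,(\Homah{A_n}{B}, \alpha_n^*)$ of Theorem~\ref{thm:hausdorff} then forces $[[\phi\circ\alpha_{\infty,n}]] = [[\psi\circ\alpha_{\infty,n}]]$ in $\Homah{A_n}{B}$ for every $n$, and Proposition~\ref{prop:limit-factor} rewrites these as equalities $[[\beta_{\infty,f(n)}\circ\phi_n]] = [[\beta_{\infty,g(n)}\circ\psi_n]]$ in $[[A_n,B]]$. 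What remains is to upgrade this identity of \emph{asymptotic} homotopy classes to a genuine homotopy of the $^*$-homomorphisms $\beta_{\infty,f(n)}\circ\phi_n$ and $\beta_{\infty,g(n)}\circ\psi_n$, which is precisely the condition needed to conclude $[f,\underline\phi] = [g,\underline\psi]$ in $\sh$.

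I expect this upgrade to be the main obstacle, and the idea is to exploit the shift built into a homotopy morphism. The defining relations show that $\beta_{\infty,f(n)}\circ\phi_n$ is homotopic to $(\beta_{\infty,f(n+1)}\circ\phi_{n+1})\circ\alpha_n$, and likewise $\beta_{\infty,g(n)}\circ\psi_n$ is homotopic to $(\beta_{\infty,g(n+1)}\circ\psi_{n+1})\circ\alpha_n$. Since $\alpha_n$ is semiprojective as a connecting map of the shape system, and $[[\beta_{\infty,f(n+1)}\circ\phi_{n+1}]] = [[\beta_{\infty,g(n+1)}\circ\psi_{n+1}]]$ by the previous paragraph, Lemma~\ref{lem:factor2} applies with domain $A_n$ and shows that these two precompositions with $\alpha_n$ are genuinely homotopic. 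Concatenating the three homotopies yields that $\beta_{\infty,f(n)}\circ\phi_n$ and $\beta_{\infty,g(n)}\circ\psi_n$ are homotopic for all $n$, hence $[f,\underline\phi] = [g,\underline\psi]$. This establishes faithfulness and, together with the earlier steps, shows that $\hlim_{\mathrm{Hd}}$ is an equivalence of categories.
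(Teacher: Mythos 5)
Your treatment of fullness and essential surjectivity matches the paper's, and the first half of your faithfulness argument is also sound: passing from $[[\phi]]_{\mathrm{Hd}} = [[\psi]]_{\mathrm{Hd}}$ to genuine homotopies between $\beta_{\infty,f(n)}\circ\phi_n$ and $\beta_{\infty,g(n)}\circ\psi_n$ \emph{as maps into the limit algebra $B$}, via Theorem~\ref{thm:hausdorff}, Proposition~\ref{prop:limit-factor}, Lemma~\ref{lem:factor2}, and concatenation, is correct (the paper reaches the same intermediate statement using Lemma~\ref{lem:t0-equiv-homotopic}). The genuine gap is your final step: homotopy of the compositions into the limit $B$ is \emph{not} the criterion for equality of morphisms in $\sh$. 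The hom-set in $\indho$ is, in effect, $\varprojlim_n \varinjlim_m\, [A_n, B_m]$: by the general definition of equivalence in $\ind{D}$ (specialized to $\mathsf D = \ho$), $[f,\underline\phi] = [g,\underline\psi]$ requires that for every $n$ there exist a \emph{finite} stage $m$ such that $\beta_{m,f(n)}\circ\phi_n$ and $\beta_{m,g(n)}\circ\psi_n$ are homotopic as maps into $B_m$. (The occurrence of $\beta_{\infty,f(n)}$ in the paper's displayed criterion for $\indho$ is evidently a typo: read literally it makes the quantifier over $m$ vacuous and contradicts the general $\ind{D}$ definition immediately above it, and the paper's own proof states its goal as producing a homotopy between $\beta_{m,n}\circ\phi_n$ and $\beta_{m,n}\circ\psi_n$ for large finite $m$.)

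This upgrade from a homotopy into $B$ to a homotopy at a finite stage is precisely the hard part of the theorem, and it does not come for free: a $^*$-homomorphism $A_n \to C([0,1],B)$ into an inductive limit need not factor through any $C([0,1],B_m)$. The paper handles it in three steps that are entirely absent from your proposal: (i) replace $(\underline B,\underline\beta)$ by a shape system with surjective connecting maps, which is legitimate by Theorem~\ref{thm:hlim-equiv} and is needed because semiprojectivity (as defined in the paper) only provides liftings along inductive systems with surjective connecting maps; (ii) use semiprojectivity of $\alpha_{n+1}$ to lift the limit-level homotopy $\theta\colon A_{n+2}\to C([0,1],B)$ to some $\tilde\theta\colon A_{n+1}\to C([0,1],B_m)$; and (iii) note that the endpoints of $\tilde\theta$ agree with $\beta_{m,n+2}\circ\phi_{n+2}\circ\alpha_{n+1}$ and $\beta_{m,n+2}\circ\psi_{n+2}\circ\alpha_{n+1}$ only after composing into the limit, hence only approximately (on a finite set) at stage $m$, and repair this with homotopy stability (Corollary~\ref{cor:homotopy-stability}) applied to $\alpha_n$. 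Without these steps, or an equivalent argument, faithfulness is not established.
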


\begin{proof}
We need to prove that $\hlim\nolimits_{\mathrm{Hd}}$ is full, dense, and faithful.
  
To prove that $\hlim\nolimits_{\mathrm{Hd}}$ is full means to prove that, given objects $(\underline A, \underline \alpha)$ and $(\underline B, \underline \beta)$ in $\sh$, the map
\begin{equation}
  \label{eq:hlim-full}
  \Hom_{\sh} \big( (\underline A, \underline \alpha), (\underline B, \underline \beta) \big)
  \to
  \Hom_{\AMH}\big( \hlim\nolimits_{\mathrm{Hd}} (\underline A, \underline \alpha), \hlim\nolimits_{\mathrm{Hd}} (\underline B, \underline \beta) \big)
\end{equation}
induced by $\hlim_{\mathrm{Hd}}$ is surjective.  This is straightforward: the functor $\hausdorffize$ is full and the restriction of $\hlim$ to $\ssh$ is an equivalence of categories (Theorem~\ref{thm:hlim-equiv}), so the commutativity of the diagram in Theorem~\ref{thm:hlim-diag-commute} implies that the map in \eqref{eq:hlim-full} is surjective.

To prove that $\hlim\nolimits_{\mathrm{Hd}}$ is dense means to prove that given an object $A$ in $\AMH$, there is an object $(\underline A, \underline \alpha)$ in $\sh$ such that $A$ is isomorphic to $\hlim\nolimits_{\mathrm{Hd}} (\underline A, \underline \alpha)$.  This is immediate from the statement that every separable $C^*$-algebras has a shape system (see \cite[Corollary~4.3]{Blackadar85}).

The fact that $\hlim\nolimits_{\mathrm{Hd}}$ is faithful is more involved.  We
need to prove that, given objects $(\underline A, \underline \alpha)$
and $(\underline B, \underline \beta)$ in $\sh$, the map in
\eqref{eq:hlim-full} is injective.  Write $A = \varinjlim
(\underline A, \underline \alpha)$ and $B = \varinjlim\, (\underline B,
\underline \beta)$.  We may further assume the connecting maps in the shape
system $(\underline B, \underline \beta)$ are surjective.  Indeed, $B$ admits a shape system $(\underline B', \underline \beta')$ with each $\beta'_n$ surjective, and then Theorem~\ref{thm:hlim-equiv} implies $(\underline B, \beta)$ and $(\underline B', \beta')$ are isomorphic in the (strong) shape category.  So we may replace $(\underline B, \underline \beta)$ with $(\underline B', \underline \beta')$.  

Suppose
$(f, \underline \phi), (g, \underline \psi)\colon (\underline A,
\underline \alpha) \to (\underline B, \underline \beta)$ are homotopy
morphisms and that  \[ \hlim\nolimits_{\mathrm{Hd}} ([f, \underline \phi]) =
\hlim\nolimits_{\mathrm{Hd}} ([g, \underline \psi]). \]    
To simplify the
notation, and without loss of generality, we assume $f = g =
\id_{\mathbb{N}}$.  Fix $n\in \mathbb{N}$.  Our goal is to show that that, for large
enough $m\in \mathbb{N}$, $\beta_{m,n}\circ \phi_n$ is homotopic to
$\beta_{m,n} \circ \psi_n$, since this implies $[\underline{\phi}] =
[\underline{\psi}]$ in $[(\underline{A}, \underline{\alpha}),
(\underline{B}, \underline{\beta})]$.  By
Corollary~\ref{cor:homotopy-stability} and the semiprojectivity of
$\alpha_n$, there exist a finite set $\mathcal{G} \subset A_{n+1}$
and $\delta > 0$ such that if $D$ is a $C^*$-algebra and $\theta, \rho \colon A \rightarrow D$ are $^*$-homomorphisms with $\|\theta(a) - \rho(a)\| < \delta$, it must be that
$\theta \circ \alpha_n$ is homotopic to $\rho\circ \alpha_n$.

The hypothesis and Proposition~\ref{prop:limit-factor} imply that
\[ [[\beta_{\infty, n+3}\circ \phi_{n+3}]]_{\mathrm{Hd}} = [[\phi\circ
\alpha_{\infty, n+3}]]_{\mathrm{Hd}} = [[\psi\circ \alpha_{\infty,
  n+3}]]_{\mathrm{Hd}} = [[\beta_{\infty, n+3}\circ \psi_{n+3}]]_{\mathrm{Hd}}. \]
Because $\alpha_{n+2}$ is semiprojective,
Lemma~\ref{lem:t0-equiv-homotopic} implies that $\beta_{\infty,
  n+3}\circ \phi_{n+3} \circ \alpha_{n+2}$ is homotopic to
$\beta_{\infty, n+3}\circ \psi_{n+3} \circ \alpha_{n+2}$.  Moreover,
the fact that $\underline{\phi}$ and $\underline{\psi}$ are homotopy
morphisms implies that $\beta_{\infty, n+2} \circ \phi_{n+2}$ is
homotopic to $\beta_{\infty, n+3}\circ \phi_{n+3} \circ \alpha_{n+2}$,
and that $\beta_{\infty, n+2} \circ \psi_{n+2}$ is
homotopic to $\beta_{\infty, n+3}\circ \psi_{n+3} \circ
\alpha_{n+2}$.

We have that $\beta_{\infty, n+2} \circ \phi_{n+2}$ is homotopic to
$\beta_{\infty, n+2} \circ \psi_{n+2}$.  Hence there is a $^*$-homomorphism $\theta\colon A_{n+2}\to
C([0,1], B)$ such that \[ \ev_0\circ \theta =
\beta_{\infty, n+2} \circ \phi_{n+2} \qquad \text{and} \qquad \ev_1\circ \theta =
\beta_{\infty, n+2} \circ \psi_{n+2}. \]  
Let $\overline{\beta}_m = \id_{C[0,1]}\otimes \beta_m$ and regard $C([0,1], B)$ as the limit of the inductive system $(C([0,1], B_m), \underline{\overline{\beta}})$.  If
$m\in \mathbb{N}$ is large enough (and $m > n+2$), the
semiprojectivity of $\alpha_{n+1}$ provides a $^*$-homomorphism
$\tilde{\theta}\colon A_{n+1}\to C([0,1], B_m)$ with
$\overline{\beta}_{\infty, m} \circ \tilde{\theta} = \theta \circ
\alpha_{n+1}$.  Therefore,
\[
  \beta_{\infty, m} \circ \ev_0 \circ \tilde{\theta}
  = \ev_0 \circ \overline{\beta}_{\infty, m} \circ \tilde{\theta}
  = \ev_0 \circ \theta \circ \alpha_{n+1}
  = \beta_{\infty, n+2} \circ \phi_{n+2} \circ \alpha_{n+1}.
\]
Similarly, $\beta_{\infty, m} \circ \ev_1 \circ \tilde{\theta} =
\beta_{\infty, n+2} \circ \psi_{n+2} \circ \alpha_{n+1}$.  Therefore,
if $m$ is large enough, we have that $\| (\ev_0\circ
\tilde{\theta})(a) - (\beta_{m, n+2}\circ \phi_{n+2}\circ
\alpha_{n+1})(a)\| < \delta$ for all $a\in \mathcal{G}$.  Thus
$\ev_0\circ \tilde{\theta}\circ \alpha_n$ and $\beta_{m, n+2}\circ
\phi_{n+2}\circ \alpha_{n+1} \circ \alpha_n$ are homotopic for large
enough $m$, as are $\ev_1\circ \tilde{\theta}\circ \alpha_n$ and
$\beta_{m, n+2}\circ \psi_{n+2}\circ \alpha_{n+1}\circ \alpha_n$, by
the choice of $\mathcal{G}$ and $\delta$.

Now, $\beta_{m, n+2}\circ \phi_{n+2}\circ \alpha_{n+1}\circ \alpha_n$
and $\beta_{m,n}\circ \phi_n$ are homotopic, and hence so are $\beta_{m,
  n+2}\circ \psi_{n+2}\circ \alpha_{n+1}\circ \alpha_n$ and
$\beta_{m,n}\circ \psi_n$ using that $\underline{\phi}$ and
$\underline{\psi}$ are homotopy morphisms.  Since $\ev_0\circ
\tilde{\theta}\circ \alpha_n$ and $\ev_1\circ \tilde{\theta}\circ
\alpha_n$ are homotopic, we (finally) conclude that $\beta_{m,n}\circ
\phi_n$ and $\beta_{m,n}\circ \psi_n$ are homotopic, as desired.
\end{proof}

Two separable $C^*$-algebras are isomorphic in the category $\sh$ if
and only if they are isomorphic in the category $\ssh$ by
\cite[Theorem~3.9]{Dadarlat94}.  In combination with
Theorem~\ref{thm:haus-functor-equiv}, this gives the following.

\begin{corollary}
  \label{cor:same-isom-objects-asym-haus}
	Two separable $C^*$-algebras are isomorphic in the category $\AM$ if and only if they are isomorphic in the category $\AMH$.  In fact, if $A$ and $B$ are separable $C^*$-algebras and $x \in [[A, B]]$ is such that $\hausdorffize(x) \in [[A, B]]_{\mathrm{Hd}}$ is an isomorphism, then $x$ is an isomorphism.  
\end{corollary}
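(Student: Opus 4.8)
The plan is to deduce the first sentence from the ``in fact'' statement and then concentrate on the latter. For the first sentence only the implication from $\AMH$- to $\AM$-isomorphism needs an argument, since the functor $\hausdorffize$ preserves isomorphisms and gives the converse for free. As $\hausdorffize$ is the identity on objects and surjective on each morphism set (it is induced by the quotient map $[[A,B]]\to[[A,B]]_{\mathrm{Hd}}$), every isomorphism in $\AMH$ has the form $\hausdorffize(x)$, so the ``in fact'' statement applies. To prove ``in fact'', I would first reduce to a statement about endomorphisms that become the identity. Given $x\in[[A,B]]$ with $\hausdorffize(x)$ invertible, choose (again by surjectivity of $\hausdorffize$) a class $y\in[[B,A]]$ with $\hausdorffize(y)=\hausdorffize(x)^{-1}$; then $\hausdorffize(y\circ x)=\mathrm{id}_A$ and $\hausdorffize(x\circ y)=\mathrm{id}_B$. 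Once we know such ``phantom'' endomorphisms are invertible, $y\circ x$ and $x\circ y$ are isomorphisms, so $x$ acquires a left and a right inverse and is itself an isomorphism. Everything therefore reduces to the following \emph{key claim}: for separable $A$, if $g\in[[A,A]]$ satisfies $\hausdorffize(g)=\mathrm{id}_A$, then $g$ is invertible in $\AM$.

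To attack the key claim I would transport it to the strong shape category. Fix a shape system $(\underline A,\underline\alpha)$ for $A$. Since $\hlim\colon\ssh\to\AM$ is an equivalence (Theorem~\ref{thm:hlim-equiv}) it is fully faithful, so $g=\hlim(\nu)$ for a unique $\nu\in\mathrm{End}_{\ssh}(\underline A,\underline\alpha)$, and $g$ is invertible if and only if $\nu$ is. The commuting square of Theorem~\ref{thm:hlim-diag-commute} gives $\hlim_{\mathrm{Hd}}(\hoforget(\nu))=\hausdorffize(g)=\mathrm{id}$, and since $\hlim_{\mathrm{Hd}}\colon\sh\to\AMH$ is an equivalence (Theorem~\ref{thm:haus-functor-equiv}), hence faithful, we get $\hoforget(\nu)=\mathrm{id}$ in $\sh$. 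So the claim becomes: a strong homotopy endomorphism of $(\underline A,\underline\alpha)$ whose underlying homotopy morphism is the identity in $\sh$ is invertible in $\ssh$. I would emphasize that this does \emph{not} follow formally from the object-level coincidence of $\sh$- and $\ssh$-isomorphism (\cite[Theorem~3.9]{Dadarlat94}): a full, identity-on-objects functor need not reflect isomorphisms of morphisms (for example, the surjection of monoids $(\mathbb N,+)\to\{0\}$, read as a functor of one-object categories, is full and reflects object-isomorphisms but sends the non-invertible $1$ to an isomorphism). The shape-theoretic structure must be used.

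The mechanism I expect to work is that phantom endomorphisms are governed by loops and are invertible by reversing them. Using $\hoforget(\nu)=\mathrm{id}$ together with semiprojectivity (via Corollary~\ref{cor:homotopy-stability} and Lemma~\ref{lem:factor2}), and after passing to a subsequence of the shape system to descend the homotopies $\alpha_{\infty,n}\circ\phi_n\simeq\alpha_{\infty,n}$ to finite levels, I would normalize $\nu$ to a representative $(\underline{\mathrm{id}},\underline h)$ in which $\phi_n=\mathrm{id}_{A_n}$ and each $h_n\colon A_n\to C([0,1],A_{n+1})$ is a loop at $\alpha_n$, that is $\ev_0\circ h_n=\ev_1\circ h_n=\alpha_n$. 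The identity of $(\underline A,\underline\alpha)$ is represented by the same data with constant homotopies. I would then take as candidate inverse the strong homotopy morphism $(\underline{\mathrm{id}},\overline h)$ built from the reversed loops $\overline h_n(a)(s)=h_n(a)(1-s)$, and check that $\nu\circ\mu$ and $\mu\circ\nu$ equal the identity in $\ssh$: in Dadarlat's composition the homotopy data essentially concatenates each loop with its reverse, which is null-homotopic relative to its endpoints, so the resulting morphism is equivalent to the identity under the relation of \cite[Definition~3.6]{Dadarlat94}. Applying $\hlim$ then shows $g=\hlim(\nu)$ is invertible, which establishes the key claim and hence the corollary.

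The main obstacle is this last step: both the normalization to loop form and, above all, the verification that reversing the loops produces a genuine two-sided inverse in $\ssh$ require careful tracking of homotopy data through the composition and equivalence relation of $\hoind$. As an alternative to the explicit loop-reversal computation, one could phrase parts of the verification through Proposition~\ref{prop:homotopy-limit} and the Pimsner condition (Theorem~\ref{thm:asymp-convergence}) to recognize the relevant composites after Hausdorffization; but upgrading such a recognition to honest invertibility in $\AM$ is precisely the delicate point that the loop structure is designed to resolve, and I expect it to be where most of the work lies.
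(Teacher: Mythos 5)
Your formal skeleton is correct, and it isolates the real issue more explicitly than the paper does: the corollary reduces, exactly as you say, to the claim that $\hausdorffize$ reflects isomorphisms, hence (via the equivalences of Theorems~\ref{thm:hlim-equiv} and~\ref{thm:haus-functor-equiv} and the square of Theorem~\ref{thm:hlim-diag-commute}) to the claim that the forgetful functor $\hoforget \colon \ssh \to \sh$ reflects isomorphisms; equivalently, that a ``phantom'' class $g \in [[A,A]]$ with $\hausdorffize(g) = \id_A$ is invertible in $\AM$. Your monoid example correctly shows that this is not a formal consequence of the object-level statement that isomorphism in $\sh$ and isomorphism in $\ssh$ coincide.

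The gap is that this crux is precisely what you do not prove. The paper's entire proof is a citation: isomorphism in $\sh$ agrees with isomorphism in $\ssh$ by \cite[Theorem~3.9]{Dadarlat94}, combined with Theorems~\ref{thm:hlim-equiv} and~\ref{thm:haus-functor-equiv}; that reference is the black box supplying exactly the shape-theoretic content your key claim requires (and, as your own counterexample shows, for the ``in fact'' clause it must be used at the morphism level --- isomorphisms are reflected --- not merely at the level of isomorphism classes of objects). Your proposed substitute, normalizing $\nu$ to loop form and inverting by reversal, is a plausible mechanism, but it is unverified at every point where it is nontrivial: the normalization must produce the two-parameter homotopy data demanded by the equivalence relation of \cite[Definition~3.6]{Dadarlat94} (and after the unavoidable reindexing, the ``identity'' entries are the connecting maps $\alpha_{m(n),n}$ rather than literal identities, so the loops are based at those maps); the composition in $\hoind$ must be unwound, with its subsequence bookkeeping, to see that the homotopy datum of $\mu\circ\nu$ is a loop concatenated with its reverse; and one must check that a rel-endpoint null-homotopy of that datum yields equality with the identity morphism under Definition~3.6. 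You acknowledge this is ``where most of the work lies,'' and it is: carrying it out amounts to re-proving a morphism-level form of Dadarlat's theorem inside $\hoind$. As written, the proposal asserts rather than establishes the corollary; the efficient completion is the paper's one-line route through \cite[Theorem~3.9]{Dadarlat94}, read so as to supply the reflection of isomorphisms and not only the comparison of isomorphism classes.
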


\subsection{The topology on $E$-theory}\label{sec:e-theory}

In this final subsection, we apply the results of the previous sections to $E$-theory and prove Theorems~\ref{thm:main} and~\ref{thm:cont-EL}.

For a $C^*$-algebra $A$ define $SA =  C_0(\mathbb{R}) \otimes A$, and write $\mathcal K$ for the $C^*$-algebra of compact operators on a separable infinite dimensional Hilbert space.  Given separable
$C^*$-algebras $A$ and $B$, Connes and Higson
defined 
\[
	E(A,B) = [[SA\otimes \mathcal{K}, SB\otimes \mathcal{K}]];
\]
see \cite[Section 4]{Connes-Higson90}. Then $E(A, B)$ is an abelian group with the
sum of $[[\phi]]$ and $[[\psi]]$ given by the orthogonal sum: fix an
isomorphism $\kappa\colon M_2(\mathcal{K})\to \mathcal{K}$ (which is unique up
to homotopy) and define
\[
  [[\phi]] + [[\psi]] = [[\id_{SB}\otimes \kappa]] \circ
  \left[\left[
      \begin{pmatrix}
        \phi & 0 \\ 0 & \psi
      \end{pmatrix}
  \right]\right].
\]

The existence of the topology on $E(A, B)$ promised in Theorem~\ref{thm:main} now follows immediately from the existence of our topology on asymptotic morphisms.

\begin{proof}[Proof of Theorem~\ref{thm:main}]
 This is a
  special case of Theorems~\ref{thm:countability}
  and~\ref{thm:asymp-convergence}.
\end{proof}

As expected, the algebraic operations on $E(A, B)$ are continuous.

\begin{theorem}\label{thm:top-group}
	If $A$, $B $, and $D$ are separable $C^*$-algebras, then $E(A, B)$ is a topological group and the product $E(A,B)\times E(B,D)\to E(A,D)$ is jointly continuous
\end{theorem}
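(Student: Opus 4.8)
The plan is to derive the whole statement from three earlier results: the joint continuity of composition (Theorem~\ref{thm:joint-continuity}), Pimsner's condition (Theorem~\ref{thm:asymp-convergence}), and first countability (Theorem~\ref{thm:countability}). Throughout write $C = SA \otimes \mathcal K$, $E = SB \otimes \mathcal K$, and $E' = SD \otimes \mathcal K$, all of which are separable. The product $E(A, B) \times E(B, D) \to E(A, D)$ is by definition nothing but the composition map $[[C, E]] \times [[E, E']] \to [[C, E']]$ of the asymptotic category, so its joint continuity is a direct instance of Theorem~\ref{thm:joint-continuity} (whose hypotheses require only $C$ and $E$ separable). Thus the product requires no further work, and the remaining task is to see that $E(A, B)$ is a topological group, i.e.\ that addition and negation are continuous.

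For addition, recall that the sum on $E(A, B) = [[C, E]]$ factors as the orthogonal-sum map
\[ \boxplus \colon [[C, E]] \times [[C, E]] \to [[C, M_2(E)]], \qquad ([[\phi]], [[\psi]]) \mapsto \left[\left[ \mathrm{diag}(\phi, \psi) \right]\right], \]
followed by left composition with the fixed class $[[\id_{SB} \otimes \kappa]] \in \Homah{M_2(E)}{E}$ (here $M_2(E) = SB \otimes M_2(\mathcal K)$). Left composition with a fixed morphism is continuous by Theorem~\ref{thm:joint-continuity}, so it suffices to prove $\boxplus$ is continuous. As $[[C, E]] \times [[C, E]]$ is first countable (Theorem~\ref{thm:countability}), it is enough to verify sequential continuity, and this is exactly where Pimsner's condition enters. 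Given $x_n \to x$ and $y_n \to y$, Theorem~\ref{thm:asymp-convergence} supplies $u, v \in [[C, C(\mathbb N^\dag, E)]]$ with $u(m) = x_m$, $v(m) = y_m$ for $m \in \mathbb N$ and $u(\infty) = x$, $v(\infty) = y$. Under the identification $M_2(C(\mathbb N^\dag, E)) = C(\mathbb N^\dag, M_2(E))$, the class $w = u \boxplus v \in [[C, C(\mathbb N^\dag, M_2(E))]]$ satisfies $w(m) = u(m) \boxplus v(m)$ for every $m \in \mathbb N^\dag$, because the evaluations $\ev_m$ act in the $\mathbb N^\dag$ coordinate while the diagonal embedding acts in the matrix coordinate, so the two operations commute. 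Hence $w(m) = x_m \boxplus y_m$ for $m \in \mathbb N$ and $w(\infty) = x \boxplus y$, and the sufficiency direction of Theorem~\ref{thm:asymp-convergence} gives $x_n \boxplus y_n \to x \boxplus y$.

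For negation, I would use that the additive inverse on $E(A, B)$ is implemented by the product with the fixed element $-1_B = [[\sigma_B]] \in E(B, B)$, where $\sigma_B = \mathrm{Fl} \otimes \id_{B \otimes \mathcal K}$ is the flip of $E = SB \otimes \mathcal K$ induced by $t \mapsto -t$ on $\mathbb R$ (the standard fact that reversing the suspension coordinate realizes $-1$). Thus $-x = [[\sigma_B]] \circ x$, again a left composition with a fixed morphism, which is continuous by Theorem~\ref{thm:joint-continuity}. Together with the continuity of addition this makes $E(A, B)$ a topological group.

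The one step demanding genuine content, rather than a formal appeal to composition-continuity, is the continuity of $\boxplus$; the crux is the observation that a Pimsner witness for the sum $x_n \boxplus y_n \to x \boxplus y$ can be manufactured as the orthogonal sum $u \boxplus v$ of the given witnesses precisely because the orthogonal-sum operation commutes with the evaluation maps $\ev_m$. Everything else is bookkeeping around left composition with fixed morphisms.
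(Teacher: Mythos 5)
Your proposal is correct, and its core mechanism is the same as the paper's: the product is handled verbatim by Theorem~\ref{thm:joint-continuity}, and the group operations are handled by reducing to sequential continuity (via Theorem~\ref{thm:countability}) and then manufacturing a Pimsner witness (Theorem~\ref{thm:asymp-convergence}) for the combined sequence out of witnesses for the individual sequences. The difference lies in how the group operations are packaged. The paper treats subtraction in a single stroke: given witnesses $\hat x, \hat y \in E(A, C(\mathbb N^\dag, B))$, it forms $z = \hat x - \hat y$ inside the group $E(A, C(\mathbb N^\dag, B))$ and uses, implicitly, that each evaluation $[[\ev_m]] \circ (\,\cdot\,)$ is a group homomorphism, so that $z(m) = x_m - y_m$ and $z(\infty) = x - y$; this needs no knowledge of how inverses are concretely realized. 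You instead separate addition from negation: your $\boxplus$-argument makes explicit precisely the compatibility (evaluation commutes with the orthogonal sum, plus the $\kappa$-bookkeeping) that the paper's one-line claim $z(m) = x_m - y_m$ leaves implicit, while your negation argument imports an additional standard fact not used anywhere in the paper's proof---that reversing the suspension coordinate realizes $-1_B$---in exchange for which negation becomes left composition with a fixed class and requires no Pimsner argument at all. Both routes are valid: the paper's is shorter and uses the group structure of $E(A, C(\mathbb N^\dag, B))$ purely abstractly, whereas yours spells out the bilinearity-type compatibilities and so is more self-verifying, at the cost of citing the flip fact from the standard $E$-theory literature.
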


\begin{proof}
	The continuity of the product is immediate from Theorem~\ref{thm:joint-continuity}.  To show $E(A, B)$ is a topological group, it suffices to show the continuity of subtraction.  Suppose $(x_n)_{n=1}^\infty$ and $(y_n)_{n=1}^\infty$ are sequences in $E(A, B)$ with $x_n \rightarrow x$ and $y_n \rightarrow y$ in $E(A, B)$.  Let $\hat x, \hat y \in E(A, C(\mathbb N^\dag, B))$ be such that $\hat x(m) = x_m$, $\hat x(\infty) = x$, $\hat y(m) = y_m$, and $\hat y(\infty) = y$ for all $m \in \mathbb N$.  Then $z = \hat x - \hat y \in E(A, C(\mathbb N^\dag, B))$ satisfies $z(m) = x_m - y_m$ for $m \in \mathbb N$ and $z(\infty) = x - y$.  So $x_m - y_m \rightarrow x - y$, as required.
\end{proof}

For separable $C^*$-algebras $A$ and $B$, Dadarlat \cite{Dadarlat05}
defined a topology on $KK(A,B)$ that is second countable and satisfies
\emph{Pimsner's condition:} a sequence $(x_n)$ in $KK(A,B)$ converges
to $x_\infty$ if and only if there exists $y\in KK(A,
C(\mathbb{N}^\dag, B))$ such that $y(n) = x_n$ for all $n\in
\mathbb{N}$ and $y(\infty) = x_\infty$.  Such a topology is obviously
unique.  Therefore, when $A$ is nuclear (or just $K$-nuclear), so that
$KK(A,B) \cong E(A,B)$ via an isomorphism that respects the product
structure (see \cite{Connes-Higson90} or
\cite[Theorem~25.6.3]{Blackadar98}), Theorem~\ref{thm:main} shows that
the Dadarlat's topology and ours coincide.

Also in \cite{Dadarlat05}, Dadarlat defined $KL(A,B)$ to be the quotient of
$KK(A,B)$ by the closure of $\{0\}$.  (This followed an earlier
definition of R{\o}rdam \cite{Rordam95} that required the UCT.)  In a
similar fashion, we define $EL(A,B)$ to be the quotient of $E(A,B)$ by
the closure of $\{0\}$.  This coincides with $[[SA\otimes \mathcal{K},
SB\otimes \mathcal{K}]]_{\mathrm{Hd}}$ (see Definition~\ref{def:haus-asym}) by
the continuity of addition and inverses (Theorem~\ref{thm:top-group}).  When $A$ is nuclear (or just
$K$-nuclear), $KL(A,B) \cong EL(A,B)$.

\begin{theorem}[cf.\ {\cite[Proposition~2.8]{Dadarlat05}}] If $A$,
  $B$, and $D$ are separable $C^*$-algebras, then $EL(A, B)$ is a
  totally disconnected,  separable, and completely metrizable topological group, and the composition product $EL(A, B) \times EL(B, D) \rightarrow EL(A, D)$ is jointly continuous.
\end{theorem}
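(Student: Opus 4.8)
The plan is to identify $EL(A,B)$ with the Hausdorffized asymptotic morphism space $[[SA\otimes\mathcal{K}, SB\otimes\mathcal{K}]]_{\mathrm{Hd}}$, as recorded just before the statement, and then extract each assertion from the general structural results of Sections~\ref{sec:top} and~\ref{sec:Hd}. Since $A$ and $B$ are separable, so are the algebras $SA\otimes\mathcal{K}$ and $SB\otimes\mathcal{K}$, which is exactly the hypothesis needed to invoke Theorem~\ref{thm:hausdorff}. That theorem exhibits $[[SA\otimes\mathcal{K}, SB\otimes\mathcal{K}]]_{\mathrm{Hd}}$ as a projective limit of \emph{countable} discrete spaces, and asserts it is totally disconnected, completely metrizable, and (because $SB\otimes\mathcal{K}$ is separable) separable. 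This settles the three topological properties of $EL(A,B)$ at once.

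For the topological group structure, I would start from Theorem~\ref{thm:top-group}, which gives that $E(A,B)$ is a topological group; in particular $\overline{\{0\}}$ is a closed subgroup and $EL(A,B) = E(A,B)/\overline{\{0\}}$ is an abelian group with quotient map $q$. The identification with the Hausdorffization shows that $q$ is precisely the Kolmogorov quotient map, which is open (as noted after Definition~\ref{def:haus-asym}). To see that the operations descend continuously, I would argue exactly as in Proposition~\ref{prop:composition}: if $q(x)=q(x')$ and $q(y)=q(y')$, then the constant sequences $x$ and $y$ converge to $x'$ and $y'$ and vice versa, so the continuity of addition on $E(A,B)$ forces $q(x+y)=q(x'+y')$, and hence addition factors through $q\times q$. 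Since $q$ is open, so is $q\times q$, making it a quotient map; the continuity of the descended addition then follows from the universal property of quotient maps. The identical argument applied to inversion completes the verification that $EL(A,B)$ is a topological group.

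Finally, the joint continuity of the composition product $EL(A,B)\times EL(B,D)\to EL(A,D)$ is immediate: this product is the composition of asymptotic morphisms on the separable suspended and stabilized algebras, and Proposition~\ref{prop:composition} says precisely that such a composition descends to a jointly continuous map on the Hausdorffizations.

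The only step requiring genuine care is the descent of the group operations in the middle paragraph; everything else is a direct citation of Theorem~\ref{thm:hausdorff} and Proposition~\ref{prop:composition}. Even that step, however, is entirely parallel to Proposition~\ref{prop:composition}, resting only on the openness of the Kolmogorov quotient map together with the continuity of the operations on $E(A,B)$ furnished by Theorem~\ref{thm:top-group}, so I do not anticipate any real obstacle.
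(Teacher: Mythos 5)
Your proposal is correct and follows essentially the same route as the paper's proof: the paper likewise cites Theorem~\ref{thm:hausdorff} for the Polish space properties, Theorem~\ref{thm:top-group} for the group structure, and Proposition~\ref{prop:composition} for the joint continuity of the composition product. The only difference is one of detail: you spell out the descent of the group operations through the open Kolmogorov quotient map, a step the paper leaves implicit as standard topological-group theory (a quotient by a closed subgroup is a topological group), and your argument for it is sound.
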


\begin{proof}
	The group structure on $EL(A, B)$ follows from Theorem~\ref{thm:top-group}, and Theorem~\ref{thm:hausdorff} implies $EL(A, B)$ is a Polish space.  The continuity of composition is a special case of Proposition~\ref{prop:composition}.
\end{proof}

Now we turn to the proof of Theorem~\ref{thm:cont-EL}, which states that---unlike $E$-theory and $KK$-theory---the group $EL(\,\cdot\,, B)$ always preserves inductive limits.

\begin{proof}[Proof of Theorem~\ref{thm:cont-EL}]
  This follows immediately from Theorem~\ref{thm:t0-continuity} and the remarks above to identify $EL$ and $KL$ in the presence of nuclearity.
\end{proof}

Two (separable) $C^*$-algebras $A$ and $B$ are \emph{$KK$-equivalent}
if there is an invertible element in $KK(A,B)$.  Similar terminology
is used for $KL$, $E$, and $EL$.  Dadarlat showed in
\cite[Corollary~5.2]{Dadarlat05}, using the Kirchberg--Phillips theorem, that two nuclear \mbox{$C^*$-algebras} are
$KK$-equivalent if and only if they are $KL$-equivalent.  The following strengthening of this statement eschews the nuclearity assumption.  In the nuclear setting, it provides a proof of Dadarlat's result that does not depend on classification theorems.

\begin{proposition}
  Two separable $C^*$-algebras are $E$-equivalent if and only if they
  are $EL$-equivalent.
\end{proposition}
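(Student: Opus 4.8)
The plan is to reinterpret both notions of equivalence as statements about categorical isomorphism and then invoke Corollary~\ref{cor:same-isom-objects-asym-haus}. Recall that $E(A,B) = [[SA\otimes\mathcal{K}, SB\otimes\mathcal{K}]]$, and that the $E$-theoretic product is precisely the composition in the asymptotic category $\AM$, with unit $1_A \in E(A,A)$ equal to $[[\mathrm{id}_{SA\otimes\mathcal{K}}]]$. Consequently, an element $x \in E(A,B)$ is invertible with respect to the product if and only if it is an isomorphism $SA\otimes\mathcal{K} \to SB\otimes\mathcal{K}$ in $\AM$. In other words, $A$ and $B$ are $E$-equivalent if and only if $SA\otimes\mathcal{K}$ and $SB\otimes\mathcal{K}$ are isomorphic objects of $\AM$. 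The first thing I would record is the analogous reformulation for $EL$: since $EL(A,B) = [[SA\otimes\mathcal{K}, SB\otimes\mathcal{K}]]_{\mathrm{Hd}}$ and the $EL$-product is the descent of the $E$-product under $\hausdorffize$ (Proposition~\ref{prop:composition}), the unit of $EL(A,A)$ is $\hausdorffize(1_A)$ and invertibility in $EL$ coincides with being an isomorphism in $\AMH$. Thus $A$ and $B$ are $EL$-equivalent if and only if $SA\otimes\mathcal{K}$ and $SB\otimes\mathcal{K}$ are isomorphic in $\AMH$.

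With these two reformulations in hand, the proposition is exactly the assertion that $SA\otimes\mathcal{K}$ and $SB\otimes\mathcal{K}$ are isomorphic in $\AM$ if and only if they are isomorphic in $\AMH$, which is the content of Corollary~\ref{cor:same-isom-objects-asym-haus}. The forward implication is the soft direction: $\hausdorffize$ is a functor, so it carries an $E$-theory isomorphism $x$ to the $EL$-isomorphism $\hausdorffize(x)$. The substantive direction is the converse. Given an invertible $\xi \in EL(A,B)$, I would lift it along the (surjective) quotient map to some $x \in E(A,B)$ with $\hausdorffize(x) = \xi$; then $\hausdorffize(x)$ is an isomorphism in $\AMH$, and the second assertion of Corollary~\ref{cor:same-isom-objects-asym-haus} upgrades this to the statement that $x$ itself is an isomorphism in $\AM$, i.e.\ $x$ is invertible in $E(A,B)$. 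Hence $A$ and $B$ are $E$-equivalent.

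The only real obstacle here is bookkeeping rather than mathematics: one must verify that invertibility for the composition products on $E$ and $EL$ matches isomorphism in $\AM$ and $\AMH$ respectively, and that units correspond under $\hausdorffize$. Both are immediate once one keeps track of the fact that the $E$-product is simply $\AM$-composition restricted to objects of the form $SA\otimes\mathcal{K}$, and that $\hausdorffize$ is the identity on objects and sends $[[\mathrm{id}]]$ to $[[\mathrm{id}]]_{\mathrm{Hd}}$. The genuine content is already isolated in Corollary~\ref{cor:same-isom-objects-asym-haus}, whose hard half rests on the equivalence $\hlim\nolimits_{\mathrm{Hd}} \colon \sh \xrightarrow{\sim} \AMH$ of Theorem~\ref{thm:haus-functor-equiv} together with the fact that the shape and strong shape categories have the same isomorphism classes of objects; so the proof of the proposition itself requires essentially no further work beyond assembling these identifications.
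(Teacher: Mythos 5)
Your proof is correct and takes essentially the same approach as the paper, whose entire proof is the single line ``This is a special case of Corollary~\ref{cor:same-isom-objects-asym-haus}.'' Your write-up simply makes explicit the bookkeeping the paper leaves implicit: that $E$- and $EL$-invertibility coincide with isomorphism in $\AM$ and $\AMH$ respectively, and that an invertible class in $EL(A,B)$ lifts along the surjective quotient so that the second assertion of the corollary applies.
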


\begin{proof}
  This is a special case of
  Corollary~\ref{cor:same-isom-objects-asym-haus}.
\end{proof}

We end by pointing out that the usual tools used to compute $KL$-groups can also be used to compute $EL$-groups.  If a separable $C^*$-algebra $A$ is $E$-equivalent to a commutative $C^*$-algebra, then there is a natural short exact sequence 
\[ 0 \rightarrow \mathrm{Ext}_\mathbb Z^1(K_{*+1}(A), K_*(B)) \rightarrow E(A, B) \rightarrow \Hom_\mathbb Z(K_*(A), K_*(B)) \rightarrow 0, \]
known as the \emph{universal coefficient theorem (UCT)} in $E$-theory.  This can be deduced from the universal coefficient theorem in $KK$-theory of \cite{Rosenberg-Schochet87} by identifying $E(A, B)$ with $E(D, B)$, and then with $KK(D, B)$, for some commutative $C^*$-algebra $D$ (for which the UCT holds).  In a similar way, one can borrow the version of the UCT from \cite{Dadarlat-Loring96} to prove the following.

\begin{theorem}
	If $A$ and $B$ are separable $C^*$-algebras, and $A$ is $E$-equivalent to a commutative $C^*$-algebra, then
	\begin{enumerate}
		\item\label{UCT1} The closure of $\{0\}$ in $E(A, B)$ coincides with the image of the subgroup of $\mathrm{Ext}_\mathbb{Z}^1(K_{*+1}(A), K_*(B))$ consisting of pure extensions.  
		\item\label{UCT2} The natural map $EL(A, B) \rightarrow \Hom_\Lambda(\underline K(A), \underline K(B))$ is an isomorphism of topological groups, where the total $K$-theory groups $\underline K(A)$ and $\underline K(B)$ are endowed with the discrete topology and the space of homomorphisms is equipped with the topology of pointwise convergence.
	\end{enumerate}
\end{theorem}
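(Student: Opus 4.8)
The plan is to transport the entire problem along the $E$-equivalence to a commutative $C^*$-algebra, where the corresponding $KK$-theoretic statements are already known. Fix a commutative $C^*$-algebra $D$ together with an invertible element $x \in E(A, D)$ with inverse $x^{-1} \in E(D, A)$. Composition with $x$ and $x^{-1}$ gives mutually inverse maps $E(D, B) \to E(A, B)$ and $E(A, B) \to E(D, B)$, which are homeomorphisms by the joint continuity of the composition product (Theorem~\ref{thm:joint-continuity}); thus $E(A, B) \cong E(D, B)$ as topological groups. Being a homeomorphism, this identification carries $\overline{\{0\}}$ to $\overline{\{0\}}$ and so descends to a topological group isomorphism $EL(A, B) \cong EL(D, B)$. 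Since $D$ is commutative, hence nuclear, the natural transformation $E(D, B) \to KK(D, B)$ is a product-preserving isomorphism; by Theorem~\ref{thm:main} and the uniqueness of a topology satisfying Pimsner's condition it is a homeomorphism, yielding $E(A, B) \cong KK(D, B)$ and $EL(A, B) \cong KL(D, B)$ as topological groups.

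For \ref{UCT1} I would invoke the identification of $\overline{\{0\}}$ in $KK(D, B)$ with the image of the pure extensions inside $\mathrm{Ext}^1_{\mathbb Z}(K_{*+1}(D), K_*(B))$, due to Schochet \cite[Theorem~3.3]{Schochet02} and Dadarlat \cite[Section~4]{Dadarlat05}, which applies because commutative $C^*$-algebras satisfy the UCT. The homeomorphism $E(A, B) \cong KK(D, B)$ carries $\overline{\{0\}}$ to $\overline{\{0\}}$, while the UCT sequence for $E(A, B)$ is identified with the Rosenberg--Schochet sequence for $KK(D, B)$ through the $K$-theory isomorphism $K_*(A) \cong K_*(D)$ induced by $x$ and the naturality of the UCT. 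As isomorphisms preserve purity of extensions, the image of the pure subgroup of $\mathrm{Ext}^1_{\mathbb Z}(K_{*+1}(D), K_*(B))$ corresponds to that of $\mathrm{Ext}^1_{\mathbb Z}(K_{*+1}(A), K_*(B))$, giving \ref{UCT1}.

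For \ref{UCT2} I would appeal to the Dadarlat--Loring universal coefficient theorem \cite{Dadarlat-Loring96}, which provides a natural isomorphism of topological groups $KL(D, B) \cong \Hom_\Lambda(\underline K(D), \underline K(B))$, the right-hand side carrying the topology of pointwise convergence. The $E$-equivalence $x$ induces a $\Lambda$-module isomorphism $\underline K(A) \cong \underline K(D)$, since it induces isomorphisms on $K$-theory with all coefficients compatibly with the Bockstein operations. Composing the chain $EL(A, B) \cong EL(D, B) \cong KL(D, B) \cong \Hom_\Lambda(\underline K(D), \underline K(B)) \cong \Hom_\Lambda(\underline K(A), \underline K(B))$ then yields a topological group isomorphism, and it remains to check that this composite agrees with the natural map of the statement. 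This follows from the functoriality of total $K$-theory: the natural map $E(A, B) \to \Hom_\Lambda(\underline K(A), \underline K(B))$ is continuous into a Hausdorff target, hence kills $\overline{\{0\}}$, and its factorization through the $E$-equivalence is dictated by naturality.

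The main obstacle is bookkeeping rather than any single hard estimate: one must verify that every identification in the chain is compatible with the ambient structures---the two UCT exact sequences, the Bockstein operations on total $K$-theory, and the pointwise-convergence topology---so that the abstractly constructed isomorphisms coincide with the canonical natural maps named in the statement. In particular, the delicate point is confirming that the natural map $EL(A, B) \to \Hom_\Lambda(\underline K(A), \underline K(B))$ factors through the $E$-equivalence exactly as the composite above, which ultimately reduces to the naturality of the $K$-theoretic functors involved.
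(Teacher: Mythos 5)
Your proposal is correct and follows essentially the same route as the paper: transport the problem along the $E$-equivalence to a commutative (hence nuclear) $C^*$-algebra $D$, identify $E(A,B)\cong KK(D,B)$ as topological groups via the product-preserving isomorphism and the uniqueness of the topology satisfying Pimsner's condition, and then quote the known $KK$-theoretic results (Schochet for \ref{UCT1}, Dadarlat's topological refinement of the Dadarlat--Loring multicoefficient UCT for \ref{UCT2}). The paper's proof is just a terser version of this argument, leaving the naturality bookkeeping you spell out implicit.
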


\begin{proof}
	As noted above, we may identify $E(A, B)$ with $KK(D, B)$ for some commutative $C^*$-algebra $D$ that is $E$-equivalent to $A$.  So it suffices to prove the analogous results in $KK$-theory.  These are known: for example, \ref{UCT1} follows form \cite[Theorem~3.3]{Schochet02} and \ref{UCT2} follows from \cite[Theorem~4.1]{Dadarlat05}.
\end{proof}

\begin{remark}
  By the main result of \cite{manuilov-thomsen04a}, for separable $C^*$-algebras $A$ and $B$, there is a natural isomorphism $E(A, B) \cong KK(SA, Q(B\otimes \mathcal{K}))$, where for a $C^*$-algebra $D$, $M(D)$ denotes the multiplier algebra of $D$, and $Q(D)$ denotes the corona $M(D) / D$.
  One might attempt to use this isomorphism and the topology on $KK$ from \cite{Dadarlat05} to obtain an alternate definition of the topology on $E(A,B)$.
  An immediate technical hurdle that one faces is that the topology on $KK$ from \cite{Dadarlat05} requires the second variable to be separable, whereas $Q(B \otimes \mathcal K)$ is non-separable whenever $B$ is non-zero.
  Separability, as opposed to $\sigma$-unitality, is important to obtain the existence of absorbing representations using the main result of \cite{Thomsen01}, which is a critical ingredient in the definition of the topology in \cite{Dadarlat05}.

  Moreover, if this technical hurdle could be overcome (e.g.\ if the result from \cite{Thomsen01} could be extended to $\sigma$-unital codomains or if one could somehow pass to the limit over separable subalgebras of the codomain, as in \cite[Appendix~B]{cgstw23}), then the topology on $E(A,B)$ obtained using the isomorphism above would be defined in terms of $^*$-homomorphisms
  \begin{displaymath}
    SA \to M(Q(B\otimes \mathcal{K})\otimes \mathcal{K}).
  \end{displaymath}
  This would be a rather difficult and impractical definition to work with.
  The topology introduced in this paper is defined directly in terms of asymptotic morphisms and, in our opinion, is more conceptual and easier to work with than this possible alternative.
  Moreover, it is not clear to the authors how to prove the $E$-theoretic version of Pimsner's condition of Theorem~\ref{thm:main} (or the other properties established in this section) for this potential alternate definition from a topology on $KK(SA, Q(B\otimes \mathcal{K}))$.
  The main difficulty would be understanding the relationship between $C(\mathbb N^\dag, Q(B \otimes \mathcal K))$ and $Q(C(\mathbb N^\dag, B) \otimes K))$.
\end{remark}

\providecommand{\bysame}{\leavevmode\hbox to3em{\hrulefill}\thinspace}
\providecommand{\MR}{\relax\ifhmode\unskip\space\fi MR }
\providecommand{\MRhref}[2]{%
  \href{http://www.ams.org/mathscinet-getitem?mr=#1}{#2}
}
\providecommand{\href}[2]{#2}

\articleend

\title{Corrigendum to ``A topology on $E$-theory''}
\author[J. Carri\'on]{Jos\'e R.\ Carri\'on}
\address{Jos\'e R.\ Carri\'on, Department of Mathematics, Texas Christian
	University, Fort Worth, Texas 76129, USA}
\email{j.carrion@tcu.edu}

\author[C. Schafhauser]{Christopher Schafhauser}
\address{Christopher Schafhauser, Department of  Mathematics, University of Nebraska - Lincoln, Lincoln, Nebraska, USA}
\email{cschafhauser2@unl.edu}

\date{\today}

\maketitle 

The second sentence of \cite[Corollary~4.4]{CS} does not follow from the given reference, and we do not know if it is true as stated.  What is true is that if $\bar x \in [[A, B]]_{\mathrm{Hd}} $ is an isomorphism, then there is an isomorphism $x \in [[A, B]]$ such that $\mathrm{Hd}(x) = \bar x$.  Indeed, \cite[Theorem~1.14]{D} implies every isomorphism in the shape category $\mathsf{sh}$ is induced by an isomorphism in the strong shape category $\mathsf{s}$-$\mathsf{sh}$, and then the result follows from using \cite[Theorem~4.3]{CS} and \cite[Theorem~3.7]{D} to identify these categories with the Hausdorffized asymptotic morphism category $\mathsf{AM}_{\mathrm{Hd}}$ and the asymptotic morphism category $\mathsf{AM}$.

This error has no effect on the rest of the results in the paper.

\addtocontents{toc}{\SkipTocEntry}

\articleend

\end{document}